\numberwithin{equation}{section}
\newtheorem{theorem}{Theorem}[section]
\newtheorem{lemma}[theorem]{Lemma}
\newtheorem{conjecture}{Conjecture}
\newtheorem{corollary}[theorem]{Corollary}
\newtheorem{proposition}[theorem]{Proposition}
\newtheorem{cor}[theorem]{Corollary}
\newtheorem{remark}[theorem]{Remark}
\newcommand{\eps}{\varepsilon}
\renewcommand{\epsilon}{\varepsilon}
\renewcommand{\rightarrow}{\to}
\title[Minimizers and best constants]{Minimizers and best constants for a weighted critical Sobolev inequality involving the polyharmonic operator}
\author[J.F.\ de Oliveira]{Jos\'{e} Francisco de Oliveira}\thanks{The first author was partially supported by  CNPq grant numbers 309491/2021-5 and 303443/2025-1}
\author[J.N.\ Silva ]{Jeferson Silva}
\address[J.F.\ de Oliveira]{
\newline\indent Department of Mathematics
	\newline\indent 
	Federal University of Piau\'{i}
	\newline\indent
	64049-550 Teresina, PI, Brazil}
	\email{\href{mailto:jfoliveira@ufpi.edu.br}{jfoliveira@ufpi.edu.br}}
\address[J.N.\ Silva]{\newline\indent Department of Mathematics
	\newline\indent 
	Federal University of the Delta of Parna\'{i}ba
	\newline\indent
	CEP 64202-020, Parna\'{i}ba, PI, Brazil}
	\email{\href{mailto:j.n.silva@ufpi.edu.br}{j.n.silva@ufpi.edu.br}}
\subjclass{46E35, 35J35, 35B65, 35J61, 35B33}
\keywords{Sobolev-type inequality; minimizers; critical exponents; polyharmonic operator; elliptic equations}
\begin{document}
\maketitle
\begin{abstract}
Our main goal is to explicitly compute the best constant for the Sobolev-type inequality involving the polyharmonic operator obtained in (Analysis and Applications 22, pp. 1417–1446, 2024). To achieve this goal, we also establish both regularity and classification results for a generalized critical polyharmonic equation in the radial setting.
\end{abstract}
\section{Introduction and main results}
\noindent For $m,N\in\mathbb{N}$ and $p\ge 1$ with $N>mp$ is well known the Sobolev embedding $\mathcal{D}^{m,p}(\mathbb{R}^{N}) \hookrightarrow L^{p^{*}}(\mathbb{R}^{N})$, where  $p^{*}=pN/(N-mp)$ is the critical exponent, $\mathcal{D}^{m,p}(\mathbb{R}^{N})$ denotes the completion of $C^{\infty}_0(\mathbb{R}^{N})$ in the norm
$
\|u\|=\big(\sum_{|\xi|=m}\|D^{\xi}u\|^{p}_{L^{p}}\big)^{{1}/{p}}
$
and $\|\cdot\|_{L^q}$ denotes the norm in $L^{q}(\mathbb{R}^N)$, $q\ge 1$. The explicit form of the best constant for the above embedding is known at least when $m=1$ or $p=2$, see \cite{Talenti} and \cite{S}. Furthermore, the functions that achieve the best constant are known to be radial and have been completely classified. These extremal functions play a central role in the analysis of elliptic equations involving critical exponents. Such equations have been extensively studied due to their rich structure and applications, see for instance \cite{Brezis-Nirenberg,zbMATH06394199,Gazzola-Grunau-Sweers,Clement-deFigueiredo-Mitidieri} and the references therein.

Recently, the weighted Sobolev spaces including fractional dimensions $W^{m, p}_{T}$, $X^{m, p}_{T}$ and $\mathcal{D}^{m,p}_{T}(\alpha)$ have been studied by several researchers since the pioneering work of P. Mitidieri et al. \cite{Clement-deFigueiredo-Mitidieri}; see, for instance, \cite{Ibero1,Abreu, doOdeOliveira2014, JAJ,DCDS2019,CV2023,DoLuHa,CCM02017}. In this work, we focus on the study of the weighted Sobolev space $\mathcal{D}^{m,p}_{T}(\alpha)$ with $p=2$ and $T=\infty$, which we will briefly describe below. A more-in-depth discussion and references on the topic can be found in Section~\ref{sec2}.

For each non-negative integer $\ell$ and $0< T \leq \infty$, we denote by $AC^{\ell}_{loc}(0,T)$ the set of functions $u:(0,T)\to\mathbb{R}$ that admits the $j$-$th$ derivative $u^{(j)}$  a.e in $(0,T )$ for all $j=0,1, \cdots, \ell$ and
  $u^{(\ell)} \in AC_{loc}(0,T)$, 
where  $AC_{loc}(0,T)$ is the set of all locally absolutely continuous functions in the interval $(0,T)$.  For $\alpha>-1$ and $p\geq 1$, we denote by $L^{p}_{\alpha}=L^{p}_{\alpha}(0,T)$ the weighted Lebesgue space of the measurable functions $u :(0,T)\rightarrow\mathbb{R}$ such that
\begin{equation}\label{norm l_{p} com peso}
\|u\|_{L^{p}_{\alpha}}=\left(\int_{0}^{T}|u(r)|^{p}r^{\alpha}\mathrm{d}r \right)^{\frac{1}{p}}<\infty,
\end{equation}
	which is a Banach space under the standard norm $\| \cdot \|_{L^{p}_{\alpha}} $.

Now,  for any $m\ge 1$ integer number, let $ AC^{m-1}_{\mathrm{R}}(0,T)$ be the set of all functions $u$ belonging to $AC^{m-1}_{loc}(0,T)$ and satisfying the zero right-hand boundary conditions
\begin{equation}\label{testfunctions-right}
 \lim_{r \to T} u^{(j)}(r) = 0,\; \forall j=0,1,\cdots, m-1
\end{equation}
and then we set 
$$D_{0,\infty}(\alpha)=D_{0,\infty}(\alpha,m)=\big\{u\in AC^{m-1}_{\mathrm{R}}(0,\infty)\; :\; u^{(m)}\in L^{2}_{\alpha}(0,\infty)\big\}.$$
For $\alpha-2m+1>0$, we define  the space $\mathcal{D}^{m,2}_{\infty}(\alpha)$ as the completion of $D_{0,\infty}(\alpha)$ under the norm 
\begin{equation}\label{norma-diric}  \|u^{(m)}\|_{L^{2}_{\alpha}} = \left( \int^{\infty}_{0} |u^{(m)}|^{2} r^{\alpha}dr\right)^{\frac{1}{2}}.
\end{equation}
For our purposes, we will introduce a second norm on $\mathcal{D}^{m,2}_{\infty}(\alpha)$ that is strongly related to the generalized radial polyharmonic operator. In fact, we define 
\begin{equation}\label{norma-grad}
    \|u\|_{\nabla^{m}_{\alpha}}= \left(\int_{0}^{\infty}|\nabla^{m}_{\alpha} u|^{2}r^{\alpha}dr\right)^{\frac{1}{2}},
\end{equation}
where
\begin{equation}\label{OpDelta}
   \nabla^{m}_{\alpha} u=\left\{\begin{aligned}
   &\Delta_{\alpha}^{k} u, \;\; &\mbox{if}&\quad m=2k\\
  &\left(\Delta_{\alpha}^{k} u \right)',\;\; & \mbox{if}&\quad m=2k+1 
   \end{aligned}\right. 
\end{equation}
in which
\begin{equation}\label{laplace-geral}
    \Delta_{\alpha} u =r^{-\alpha}(r^{\alpha} u^{\prime})^{\prime}= u''+ \frac{\alpha}{r}u'
\end{equation}
is the $\alpha$-generalized radial Laplacian operator, see \cite{JAJ} for more details. It is worth to mention that the norm \eqref{norma-grad} is associated with the inner product on $\mathcal{D}^{m,2}_{\infty}(\alpha)$ given by
\begin{equation}\label{Inner}
        \langle u, v\rangle=\int_{0}^{\infty} \nabla^{m}_{\alpha} u \nabla^{m}_{\alpha} v r^{\alpha} \,dr, \;\; \mbox{for all}\;\; u, v \in \mathcal{D}^{m,2}_{\infty}(\alpha).
\end{equation}
Furthermore, according with \cite[Proposition~2.15 and Theorem 1.1]{JN-JF} the norms in \eqref{norma-diric}  and \eqref{norma-grad} are equivalent and also we have the continuous embedding 
\begin{equation}\label{D-emb}
    \mathcal{D}^{m,2}_{\infty}(\alpha)\hookrightarrow L^{2^{*}}_{\alpha},\;\;\mbox{with}\;\;\alpha-2m+1>0
\end{equation} 
where 
\begin{equation}\label{critical=exponent}
     2^{*} = 2^{*}(\alpha, m)=\frac{2(\alpha+1)}{\alpha-2m+1}
 \end{equation}
 is the critical exponent. In view of \eqref{D-emb}, for  $ \alpha-2m+1>0$ we can define 
\begin{equation}\label{c0-e40}
  \mathcal{S}=\mathcal{S}(\alpha,m)  = \inf \left\{ \frac{\|\nabla_{\alpha}^{m} u \|^{2}_{L_{\alpha}^{2}}}{\|  u \|^{2}_{L_{\alpha}^{2^{*}}}}\, : \,\, u \in \mathcal{D}^{m,2}_{\infty}(\alpha)\setminus\{0\}\right\}.
\end{equation}
Then, we have $\mathcal{S}>0$ and  $\mathcal{S}^{-\frac{1}{2}}$ is the \textit{best constant} for the Sobolev type embedding \eqref{D-emb}. In \cite[Theorem 1.2]{JN-JF} is proven that the constant $\mathcal{S}(\alpha,m)$ is attained, that is,  there exists $z\in \mathcal{D}^{m,2}_{\infty}(\alpha)$ such that $\|z\|_{L^{2^{*}}_{\alpha}} =1$ and $\|\nabla^{m}_{\alpha} z\|^{2}_{L^{2}_{\alpha}} = \mathcal{S}(\alpha,m).$ 
\subsection{Main results}
The aim of the paper is twofold: first, to identify conditions under which the minimizers of $\mathcal{S}(\alpha,m)$ can be classified; second, to explicitly compute the best constant $\mathcal{S}^{-1/2}(\alpha, m)$.  In this direction, for $m=1$, inspired  by the classic Bliss inequality \cite{Bliss},  P. Cl\'{e}ment et al. \cite[Proposition 1.4]{Clement-deFigueiredo-Mitidieri} were able to show that  $\mathcal{S}(\alpha, 1)$ is attained by the Bliss functions 
\begin{equation}\label{1-bliss function}
 u(r) = \mathcal{P}^{\frac{\alpha-1}{4}}(1+r^{2})^{-\frac{\alpha-1}{2}}, \quad r>0
\end{equation}
where $\mathcal{P}=\mathcal{P}(\alpha,1) = (\alpha-1)\left(\alpha+1\right)$. In addition, we can write
\begin{equation}\label{1-bestC}
\begin{aligned}
  \mathcal{S}^{-\frac{1}{2}}(\alpha,1) &=\mathcal{P}^{-\frac{1}{2}}\left[\dfrac{2\Gamma(\alpha+1)}{\Gamma(\frac{\alpha+1}{2})\Gamma(\frac{\alpha+1}{2})}\right]^{\frac{1}{\alpha+1}},
  \end{aligned}
\end{equation}
where $\Gamma(x)=\int_{0}^{1}(-\ln t)^{x-1}dt,\; x>0$ is the Euler's gamma function. In this paper, we will extend the extremal functions \eqref{1-bliss function} and the best constant \eqref{1-bestC} for $m\ge2$.

From now on, and unless otherwise specified,  we are assuming $\mathcal{D}^{m,2}_{\infty}(\alpha)$, with $\alpha>-1$ and  $m \in \mathbb{N}$ under the \textit{Sobolev condition} (c.f Section~\ref{sec2}) 
\begin{equation}\label{SSS-condi}
    \alpha-2m+1>0.
\end{equation}
Our first result is about the regularity of a weak solution $u\in \mathcal{D}^{m,2}_{\infty}(\alpha)$ to the critical polyharmonic type equation
\begin{equation}\label{problema m}
    (-\Delta_{\alpha})^{m} u = |u|^{2^*-2}u \quad \text{in} \quad (0, \infty),
\end{equation}
where  $2^{*}= 2^{*}(\alpha,m)$ is the critical exponent in \eqref{critical=exponent} and $(-\Delta_{\alpha})^{m}$ denotes the $m$-polyharmonic operators induced by \eqref{laplace-geral}.

\begin{theorem}[Regularity]\label{Existencia de minimizante suave} Suppose $\alpha-2m+1>0$  and 
    let $u \in \mathcal{D}^{m,2}_{\infty}(\alpha)$ be weak solution for
\eqref{problema m}, that is, 
\begin{equation}\label{solution weak}
        \int_{0}^{\infty} \nabla^{m}_{\alpha} u \nabla^{m}_{\alpha} v r^{\alpha} \,dr =  \int_{0}^{\infty} |u|^{2^{*}-2}u v r^{\alpha}\, dr , \quad \forall v \in \mathcal{D}^{m,2}_{\infty}(\alpha).
\end{equation}
Then,  $u\in C^{2m}(0,\infty)$ and solves the equation  \eqref{problema m}. In addition, $u$ satisfies boundary conditions
\begin{equation}\label{0-Ulimites}
\lim_{r\to 0}r^{\alpha}\Delta_{\alpha}^{j} u (r) = \lim_{r\to 0}r^{\alpha}((\Delta_{\alpha})^{j}u)^{\prime} (r)=0,\;\; \mbox{for}\;\; j= 0, 1, \cdots, m-1
\end{equation}
and
\begin{equation}\label{infinty-Ulimites}
\lim_{r\to \infty}\Delta_{\alpha}^{j} u (r) =\lim_{r\to \infty}((\Delta_{\alpha})^{j}u)^{\prime} (r) = 0,\;\; \mbox{for}\;\; j= 0, 1, \cdots, m
\end{equation}
and the identity
\begin{equation}\label{representação deltaj}
    (-\Delta_{\alpha})^{j} u (r) =  \displaystyle\int_{r}^{\infty}\Big( t^{-\alpha} \displaystyle\int_{0}^{t} (-\Delta_{\alpha})^{j+1}u(s) s^{\alpha}\Big)dt,\;\; \mbox{for}\;\; j= 0, 1, \cdots, m-1.
\end{equation}
\end{theorem}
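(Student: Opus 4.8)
The plan is to bootstrap regularity from the integral representation that weak solutions must satisfy, proceeding from the outermost operator $\Delta_\alpha$ inward. First I would recast the weak formulation: since $\mathcal{D}^{m,2}_\infty(\alpha)$ is the completion of $D_{0,\infty}(\alpha,m,p)$ under the norm \eqref{norma-diric}, testing \eqref{solution weak} against functions compactly supported in $(0,\infty)$ shows that, in the sense of distributions on $(0,\infty)$, $(-\Delta_\alpha)^m u = |u|^{2^*-2}u$, where on the left the operator is understood via repeated application of \eqref{laplace-geral}. The embedding \eqref{D-emb} with $\theta = \alpha$ gives $u \in L^{2^*}_\alpha$, hence $f := |u|^{2^*-2}u \in L^{(2^*)'}_\alpha$ locally; since $u$ is a distributional solution of an ODE of order $2m$ with a locally integrable right-hand side, standard ODE theory upgrades $u$ to $AC^{2m-1}_{loc}(0,\infty)$ and then to $C^{2m}(0,\infty)$ by the usual bootstrap once we know $u$ is continuous (which follows because each integration of a locally $L^1_\alpha$ function against the kernel $t^{-\alpha}\int_0^t(\cdot)s^\alpha\,ds$ gains regularity).

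Next I would establish the boundary behaviour and the representation \eqref{representação deltaj} simultaneously, by downward induction on $j$ starting from $j=m-1$. The key analytic input is that the weighted finiteness $\int_0^\infty |\nabla^m_\alpha u|^2 r^\alpha\,dr < \infty$ together with $\int_0^\infty |u|^{2^*}r^\alpha\,dr < \infty$ controls the growth of $u$ and its derivatives at both endpoints. For the inner representation, write $w_j := (-\Delta_\alpha)^j u$; the equation reads $\Delta_\alpha w_{j} = -w_{j+1}$ with $w_m = |u|^{2^*-2}u$, i.e. $(r^\alpha w_j')' = -r^\alpha w_{j+1}$. Integrating once from $0$ gives $r^\alpha w_j'(r) = -\int_0^r w_{j+1}(s)s^\alpha\,ds + c_j$; the point is to show $c_j = 0$, which is exactly the first limit in \eqref{0-Ulimites}. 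This I would argue by contradiction: if $r^\alpha w_j'(r)\to c_j \ne 0$ as $r\to 0$, then $w_j'(r) \sim c_j r^{-\alpha}$, and integrating (using $\alpha > 2m-1 \ge 1$, so $-\alpha < -1$) forces $w_j$ to blow up like $r^{1-\alpha}$ near $0$, which after the remaining $j$ integrations contradicts the membership of $u$ in $L^{2^*}_\alpha$ (or contradicts $\|\nabla^m_\alpha u\|_{L^2_\alpha}<\infty$ at the top level). Once $c_j=0$, a second integration from $\infty$ — justified because the decay of $w_{j+1}$ inherited from finiteness of the relevant norms makes $\int_r^\infty t^{-\alpha}\int_0^t w_{j+1}(s)s^\alpha\,ds\,dt$ convergent — yields \eqref{representação deltaj}, and reading off the limits as $r\to 0$ and $r\to\infty$ gives \eqref{0-Ulimites} and \eqref{infinty-Ulimites}.

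For the decay statement \eqref{infinty-Ulimites} at $j=m$, one needs $w_m = |u|^{2^*-2}u \to 0$ and $w_m' \to 0$ at infinity; the first follows from $u\to 0$ at infinity (itself a consequence of $u\in L^{2^*}_\alpha$ and monotonicity-type control, or from the embedding into a space of continuous functions vanishing at the endpoints), and then the representation \eqref{representação deltaj} propagates the decay inward to all $w_j$ with $j<m$, and similarly for the derivatives. I would organize the induction so that at each step the already-established decay of $w_{j+1}$ feeds the convergence of the double integral defining $w_j$.

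The main obstacle I anticipate is the careful justification of the vanishing of the constants of integration $c_j$ at the origin — i.e. that the \emph{a priori} energy bound $\|\nabla^m_\alpha u\|_{L^2_\alpha} < \infty$ genuinely rules out the "bad" fundamental-solution behaviour $r^{1-\alpha}$ (and its iterated antiderivatives) near $r=0$. This requires a quantitative matching between the exponents produced by repeated integration against the weight $r^\alpha$ and the integrability thresholds of $L^2_\alpha$ and $L^{2^*}_\alpha$, and it is where the Sobolev condition $\alpha - 2m + 1 > 0$ is essential; the rest of the argument is then a fairly mechanical bootstrap and induction.
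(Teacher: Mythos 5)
Your proposal takes a genuinely different route from the paper. You work directly with $u$: interior ODE regularity gives $u\in C^{2m}(0,\infty)$, and then you try to determine the constants of integration of $(r^{\alpha}((-\Delta_{\alpha})^{j}u)')'=-r^{\alpha}(-\Delta_{\alpha})^{j+1}u$ at $r=0$ and $r=\infty$ by contradiction against the integrability constraints. The paper instead never touches the higher derivatives of $u$ directly: it \emph{constructs} a candidate $w=w_m$ by iterating the kernel $w_k(r)=\int_r^\infty t^{-\alpha}\int_0^t s^{\alpha}w_{k-1}(s)\,ds\,dt$ starting from $w_0=|u|^{2^*-2}u$, so that the decay estimates, the smoothness, the boundary conditions \eqref{0-Ulimites}--\eqref{infinty-Ulimites} and the representation \eqref{representação deltaj} hold for $w$ \emph{by construction} (Lemma~\ref{lemma-iteration}); the work then goes into showing $w\in\mathcal{D}^{m,2}_{\infty}(\alpha)$ (via a chain of weighted Hardy inequalities with exponents $q_k$) and that $w$ is a weak solution with the same right-hand side, whence $u=w$ by the Hilbert-space structure. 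Your approach, if completed, would have the advantage of bypassing that membership argument entirely, since $u$ is already in the space; the paper's approach has the advantage that nothing about the endpoint behaviour of $(-\Delta_{\alpha})^{j}u$ for $2j>m$ ever needs to be extracted from the function-space hypotheses.

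That said, there is a genuine gap, and it sits exactly where the content of the theorem is. The boundary conditions \eqref{0-Ulimites} and \eqref{infinty-Ulimites} \emph{are} the statements that your constants $c_j$ vanish at the origin and that the limits $L_j=\lim_{r\to\infty}(-\Delta_{\alpha})^{j}u(r)$ vanish at infinity; you acknowledge the former as ``the main obstacle'' and sketch only its first step. To rule out $c_j\neq0$ you must show that the induced singularity $r^{1-\alpha}$ in $(-\Delta_{\alpha})^{j}u$ propagates down to a non-cancellable singularity $r^{-(\alpha-2j-1)}$ in $u$ itself; since the homogeneous solutions of $(-\Delta_{\alpha})^{j}y=0$ contain the whole family $r^{1-\alpha},r^{3-\alpha},\dots$, one must simultaneously exclude all of these lower-level singular pieces, which is a nontrivial simultaneous induction rather than a single contradiction. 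At infinity the situation is worse: you justify $L_j=0$ by appealing to ``the representation propagating decay inward,'' but the representation \eqref{representação deltaj} already presupposes $L_j=0$ (the outer integral from $\infty$ forces the limit to vanish), so as written the argument is circular; you would instead need a separate contradiction showing that $L_j\neq0$ forces $u\sim Cr^{2j}$ at infinity, incompatible with $u\in L^{2^*}_{\alpha}$. These steps are plausibly completable, but they are not mechanical, and until they are carried out the proof of \eqref{0-Ulimites}, \eqref{infinty-Ulimites} and \eqref{representação deltaj} is missing rather than merely deferred.
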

Note that Theorem~\ref{Existencia de minimizante suave} ensures that any weak solution $u$ of the problem \eqref{problema m} is a classical solution. As mentioned before, we intend to calculate the best constant $\mathcal{S}^{-\frac{1}{2}}(\alpha, m)$, for which we must obtain a classification-type result for the solutions of \eqref{problema m}. This is a delicate question since we are considering higher-order derivatives, see for instance \cite{AW2022,Gazzola-Grunau-Sweers}. Here, we say that a function  \( u \in C^{2m}(0,\infty) \) is  $\ell$-th nonsingular at $r=0$ if, for each  $0 \leq i \leq \ell$, there exists
\begin{equation}\label{l-nonsigular}
\lim_{r\to 0^{+}} u^{(i)}(r):= u^{(i)}(0).
\end{equation}
Further, we say that  $u$ is $\ell$-th singular at $r=0$ if $\ell$ is the smallest integer $0 \leq \ell \leq 2m $ such that  
\begin{equation}\label{l-sigular}
\lim_{r\to 0^{+}} |u^{(\ell)}(r)| = +\infty.
\end{equation}

We note that classification results of solutions have been studied by several authors, see \cite{JX1999, FK2019, RS1989, AW2022, CGS1989, WC1991, XL2020, DT2024} and  the references therein. For $N> 2m$ and $p+1 = 2N/(N-2m)$, let
\begin{equation}\label{problema intro}
    (-\Delta)^{m} u = u^{p}, \quad u>0 \quad \text{in}\,\, \mathbb{R}^{N} \setminus\{0\}
\end{equation}
be the classical $m$-polyharmonic equation.
In \cite{JX1999}, it was shown that all $0$-th nonsingular solutions $u$
at the origin $0\in\mathbb{R}^N$ to \eqref{problema intro} are given by
$$u_{\epsilon}(x) = \left( \dfrac{2\epsilon}{\epsilon^{2}+ |x - x_{0}|^2} \right)^{\frac{N - 2m}{2}}, \quad  \epsilon>0\,\, \text{and} \,\, x_{0}\in \mathbb{R}^{N}.$$ On the other hand, the cases $m=1,2, 3$ were analyzed in \cite{FK2019, RS1989, AW2022} and it was established that if $u \in C^{2m}(\mathbb{R}^{N}\setminus\{0\})$ is  $0$-th singular solution to \eqref{problema intro} then $u$ is radially symmetric with respect to the origin, monotonically decreasing and satisfies  $$u (|x|) = |x|^{-\frac{N-2m}{2}} v(- \ln{|x|}),$$ where $v$ is periodic and bounded. For $m\ge 4$,  to the best of our knowledge, the classification result for $0$-th 
 singular solutions to \eqref{problema intro} remains an open question, as noted in \cite[Conjecture~4]{AW2022}. Here, we were able to show a classification-type result for $(2m-1)$-th  nonsingular positive solutions of the generalized $m$-polyharmonic equation \eqref{problema m}. Firstly, for $\epsilon>0$ let us introduce the notation
\begin{equation}\label{c0-e13 +}
 w_{\epsilon}(r) = \mathcal{P}^{\frac{\alpha - 2m +1}{4m}} \left( \dfrac{\epsilon}{\epsilon^{2}+ r^2} \right)^{\frac{\alpha - 2m +1}{2}}, \quad  r>0 
\end{equation}
where $\mathcal{P} = P(\alpha, m)= \displaystyle\prod_{h= -m}^{m-1} (\alpha + 1 + 2h)$. 

We observe that from Theorem~\ref{Existencia de minimizante suave}, the weak solutions $u\in \mathcal{D}^{m,2}_{\infty}(\alpha)$ of \eqref{problema m} are, in fact, of class $C^{2m}(0,\infty)$, which makes it possible to ask whether they are $(2m-1)$-th nonsingular at 
$r=0$ or not. Keeping this in mind, we present the following result:

\begin{theorem}[classification]\label{Teorema unicidade} If $v\in \mathcal{D}^{m,2}_{\infty}(\alpha)\cap C^{2m}(0,\infty) $ is a positive $(2m-1)$-th nonsingular at $r=0$ solution to \eqref{problema m} such that $v^{(i)}(0) = 0$, for  $i = 1, 3, \ldots, 2m-1$, then  $v=w_{\epsilon}$ for some $\epsilon>0$.
\end{theorem}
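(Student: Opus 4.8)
The plan is to read \eqref{problema m} as the radial critical polyharmonic equation in the (possibly non-integer) ``dimension'' $N=\alpha+1$ and to follow the scheme of \cite{JX1999}, replacing the maximum principle — unavailable for $(-\Delta_\alpha)^m$ when $m\ge 2$ — by the positivity of the iterated Green operator, which is exactly what Theorem~\ref{Existencia de minimizante suave} provides. Write $\beta=\tfrac{\alpha-2m+1}{2}>0$ and $\phi_j=(-\Delta_\alpha)^j v$, so that $\phi_0=v$, $\phi_m=v^{2^{*}-1}$ and $-\Delta_\alpha\phi_j=\phi_{j+1}$. The first step is to extract the structure forced by Theorem~\ref{Existencia de minimizante suave}: since $v>0$, reading its integral representation downward from $j=m-1$ to $j=0$ gives $\phi_j>0$, and the same formula shows that every $\phi_j$ is strictly decreasing with $\phi_j(r)\to 0$ as $r\to\infty$; the hypotheses ($(2m-1)$-nonsingularity and $v^{(i)}(0)=0$ for odd $i$) are consistent with the regular indicial exponents $0,2,\dots,2m-2$ of $(-\Delta_\alpha)^m$ at the origin, so $v$ expands in even powers of $r$ near $r=0$. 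Finally, inserting $v\in L^{2^{*}}_\alpha$ (the embedding \eqref{D-emb} with $\theta=\alpha$) into the integral representation and bootstrapping yields the sharp behaviour $v(r)\sim c\,r^{-2\beta}$ as $r\to\infty$, with matching expansions for the $\phi_j$; in particular $\int_0^\infty v(s)^{2^{*}-1}s^\alpha\,ds<\infty$.

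Next I would recast \eqref{problema m} as a single integral equation. Iterating the integral representation of Theorem~\ref{Existencia de minimizante suave} $m$ times gives
\[
v(r)=\int_0^\infty \mathcal{G}(r,s)\,v(s)^{2^{*}-1}\,s^\alpha\,ds ,
\]
where $\mathcal{G}>0$ is the iterated Green kernel of $(-\Delta_\alpha)^m$ on $(0,\infty)$ with the homogeneous conditions at $0$ and at $\infty$ read off from that theorem. Being the radial part of the Riesz kernel of order $2m$ in dimension $N=\alpha+1$, $\mathcal{G}$ enjoys the inversion covariance
\[
\mathcal{G}\!\left(\tfrac{\lambda^{2}}{r},\tfrac{\lambda^{2}}{s}\right)=\lambda^{-4\beta}\,(rs)^{2\beta}\,\mathcal{G}(r,s),\qquad \lambda>0 ,
\]
and a change of variables then shows that the Kelvin-type transform $v_\lambda(r):=(\lambda/r)^{2\beta}v(\lambda^{2}/r)$ maps solutions to solutions: each $v_\lambda$ is positive, lies in $\mathcal{D}^{m,2}_\infty(\alpha)$, is $(2m-1)$-nonsingular at $r=0$ (using the sharp behaviour of $v$ at $\infty$), and satisfies the same integral equation.

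I would then run the method of moving spheres about the origin on this equation. Since $v$ is bounded at $0$ and decays at $\infty$, one checks that $v_\lambda\le v$ on $[\lambda,\infty)$ for all small $\lambda>0$; sliding $\lambda$ up and using the strict positivity and covariance of $\mathcal{G}$ produces the standard alternative. Either $v_\lambda\le v$ on $[\lambda,\infty)$ for every $\lambda>0$ — equivalently $r^{\beta}v(r)$ is nondecreasing, which is incompatible with $r^{\beta}v(r)>0$ and $r^{\beta}v(r)\to 0$ — or there is a finite critical $\epsilon>0$ with $v_\epsilon\equiv v$ on $(0,\infty)$. Running the same procedure from the other side, and noting that two distinct inversion symmetries would compose to a dilation symmetry $v(\mu r)=\mu^{-2\beta}v(r)$ with $\mu\ne 1$ (impossible, as $v$ is bounded at $0$), one is left with the single identity $v(r)=(\epsilon/r)^{2\beta}v(\epsilon^{2}/r)$. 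Passing to the Emden--Fowler variable $t=\ln r$, this says that $\psi(t):=r^{\beta}v(r)$ is positive, even about $t_0=\ln\epsilon$, decays at $\pm\infty$, and solves the autonomous $2m$-th order ODE into which \eqref{problema m} transforms — an ODE for which $\mathcal{P}^{(\alpha-2m+1)/(4m)}\big(2\cosh(t-t_0)\big)^{-\beta}$ (the profile of $w_\epsilon$, the constant being forced by the equation) is a solution of this type. A uniqueness argument for this ODE, exploiting the evenness, the two-sided decay, and the positivity and monotonicity of all the $\phi_j$ (which rule out additional oscillations of $\psi$), then yields $v=w_\epsilon$.

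The main obstacle is the moving-spheres comparison of the third step: because no comparison principle holds for $(-\Delta_\alpha)^m$ with $m\ge 2$, the sliding must be carried out entirely at the level of the integral equation, which is precisely why the positivity of the iterated Green operator and the sharp boundary information of Theorem~\ref{Existencia de minimizante suave} are indispensable; the closely related delicate point is the sharp decay $v(r)\sim c\,r^{-2\beta}$ of the first step, needed both to keep the Kelvin transforms $v_\lambda$ inside the admissible class and to exclude the non-compact alternative. The concluding ODE identification is comparatively soft once these are in place, and for $m=1,2$ it is also covered by the classical radial classifications; the new feature here is the weighted, real-dimensional framework.
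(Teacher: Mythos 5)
Your strategy is genuinely different from the paper's, and it has a gap that the level of detail you give does not close. The decisive weakness is your last step. In the classical moving-spheres proofs on $\mathbb{R}^N$ (Li, Chen--Li--Ou, and their polyharmonic versions), the conclusion does not come from an ODE uniqueness theorem: one moves spheres centered at \emph{every} point $x_0$, obtains a critical radius and an inversion symmetry about each such sphere, and feeds the whole family of symmetries into a calculus lemma that forces the bubble form. In the present weighted radial setting the only admissible center of inversion is $r=0$, so your argument produces exactly one identity, $v(r)=(\epsilon/r)^{2\beta}v(\epsilon^2/r)$. After the Emden--Fowler substitution this says only that $\psi(t)=r^{\beta}v(r)$ is a positive, even (about $t_0=\ln\epsilon$), two-sided decaying solution of an autonomous $2m$-th order ODE. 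For $m=1$ a phase-plane/energy argument pins down the homoclinic orbit and the step is indeed soft; for $m\ge 2$ the uniqueness of the positive even homoclinic of the $2m$-th order Emden--Fowler equation is precisely the hard core of the classification problem (it is the reason the singular case is open for $m\ge 3$, cf.\ the discussion of \cite[Conjecture~4]{AW2022} in the introduction), and positivity and monotonicity of the $\phi_j$ do not by themselves exclude a second even homoclinic with a different value of $\psi(t_0)$. So the step you call ``comparatively soft'' carries essentially all the weight and is unproven. Secondary gaps: the inversion covariance of the iterated kernel $\mathcal{G}$ is not a formal consequence of the covariance of the one-step kernel $\max(r,s)^{-(\alpha-1)}$ (a direct substitution produces the wrong power of the integration variable; the identity is true but requires either identifying $\mathcal{G}$ with a radial Riesz kernel continued in $\alpha$ or a separate verification), and the sharp decay $v(r)\sim c\,r^{-2\beta}$, the membership $v_\lambda\in\mathcal{D}^{m,2}_{\infty}(\alpha)$, and the start of the sphere-moving at the level of the weighted integral equation are all asserted rather than proved.

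For contrast, the paper avoids moving spheres entirely. After verifying by an explicit induction that $w_\epsilon$ in \eqref{c0-e13 +} solves \eqref{problema m}, it chooses $\bar\epsilon$ with $w_{\bar\epsilon}(0)=v(0)$, notes that $v$ and $w_{\bar\epsilon}$ solve the same $2m$-th order ODE with the same odd-order data at $r=0$, and reduces the theorem to showing that the even-order derivatives at $0$ also agree, after which uniqueness for the initial value problem finishes. That matching is obtained by writing $g=w_{\bar\epsilon}-v$ via variation of parameters against the explicit fundamental system $\{r^{2(i-1)},\,r^{-(\alpha-2i+1)}\}_{i=1}^{m}$ of $(-\Delta_\alpha)^m$: the singular modes are excluded because $g(0)=0$ is finite, and the remaining polynomial modes are excluded because the particular solution tends to zero at infinity while $|g(r)|\le C r^{-(\alpha-2m+1)/2}$ by Lemma~\ref{LR2}. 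The honest comparison is that your plan trades this short growth-versus-decay contradiction for a $2m$-th order homoclinic uniqueness theorem that you would still have to prove.
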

Theorem~\ref{Teorema unicidade} improves and complements the classification result in \cite[Theorem~1.3]{JX1999}, since we are considering the  non-integer parameter $\alpha$. Here, our approach to prove Theorem~\ref{Teorema unicidade} is different from that in \cite{JX1999}. In fact, the argument in our ODE classification result is closer to that in \cite{S} and \cite{EFJ}.

Before stating the next results, we would like to draw attention to an additional difficulty that we have to deal with the weighted Sobolev spaces $\mathcal{D}^{m,2}_{\infty}(\alpha)$. In the classical case (i.e, $\alpha= N-1$), accordingly to \cite{S} the minimizers $z$ for the variational problem associated with the Sobolev embedding $\mathcal{D}^{m,2}(\mathbb{R}^{N}) \hookrightarrow L^{2^{*}}(\mathbb{R}^{N})$ are $0$-th nonsingular functions. 
Here, it is natural to ask the following question:  \textit{For $m\geq1$, if $z\in \mathcal{D}^{m,2}_{\infty}(\alpha)\cap C^{2m}(0,\infty)$ is a minimizers for $\mathcal{S}(\alpha, m)$,  does it follow that $z$ is a $\ell$-th nonsingular function at $r=0$ for some $0\le \ell\le  2m$?}
For $m=1$, we already know that the Bliss functions in \eqref{1-bliss function} are $0$-th nonsingular at $r=0$ extremal functions for $\mathcal{S}(\alpha,1)$. Furthermore,   for either $\alpha=N-1$ or $m=1$, the answer is also positive. We will show in Lemma~\ref{lemma-iterationRev} below that, for general $\alpha$ and $m=2$, the infimum $\mathcal{S}(\alpha,2)$ admits a $3$rd-nonsingular at $r=0$ minimizer $v$ such that $v^{(1)}(0)=v^{(3)}(0)= 0$, provided that it admits at least one $0$-th nonsingular minimizer. However, unfortunately, we are not able to prove it for $m\ge3$, remaining an interesting open problem. Therefore, we will take into account the following condition:
\begin{equation}\label{cond min non-singular}
    \begin{aligned}
     \mathcal{S}(\alpha, m)&\text{ admits a }(2m-1)\text{-th nonsingular at } r=0 \text{ minimizer }\\ z\in \mathcal{D}^{m,2}_{\infty}(\alpha) 
     &\text{ such that } z^{(i)}(0) = 0, \text{ for }  i = 1, 3, \ldots, 2m-1.  
    \end{aligned}
\end{equation}
It is worth mentioning that the existence of a minimizer $z\in\mathcal{D}^{m,2}_{\infty}(\alpha)$ for $\mathcal{S}(\alpha,m)$ has already been proved in \cite{JN-JF} and, from Theorem~\ref{Existencia de minimizante suave}, we have $z\in C^{2m}(0,\infty)$. Thus, the actual requirements in condition \eqref{cond min non-singular} are merely the nonsingular and odd-order left-zero initial conditions.  

As a by-product of Theorem \ref{Existencia de minimizante suave} and Theorem \ref{Teorema unicidade} we have the following:

\begin{cor}\label{cor 1.2} Suppose that \eqref{cond min non-singular} holds. Then, $z\in \mathcal{D}^{m,2}_{\infty}(\alpha)$ is a minimizer for $\mathcal{S}(\alpha, m)$  if and only if $z =w_{\epsilon}$ (up to a sign) for some $\epsilon>0$.
\end{cor}

Finally, with the help of Corollary \ref{cor 1.2}, we will calculate \textit{best constant}  $\mathcal{S}^{-\frac{1}{2}}(\alpha,m)$ 
for the continuous embedding $\mathcal{D}^{m,2}_{\infty}(\alpha) \hookrightarrow L^{2^{*}}_{\alpha}$. 

\begin{theorem}\label{Theorem 1.3} Suppose that \eqref{cond min non-singular} holds. Then, the best constant $\mathcal{S}^{-\frac{1}{2}}(\alpha, m)$ is given by
$$\mathcal{S}^{-\frac{1}{2}} =  \mathcal{P}^{-\frac{1}{2}}\left[\dfrac{2\Gamma(\alpha+1)}{\Gamma(\frac{\alpha+1}{2})\Gamma(\frac{\alpha+1}{2})}\right]^{\frac{m}{\alpha+1}},$$
    where $\Gamma(x)=\int_{0}^{1}(-\ln t)^{x-1}dt,\; x>0$ is the Euler's gamma function.
\end{theorem}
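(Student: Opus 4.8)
The plan is to invoke Corollary~\ref{cor 1.2} to reduce $\mathcal{S}(m,2,\alpha,\alpha,\infty)$ to the value of the Rayleigh quotient \eqref{c0-e40} on a single explicit profile, and then to carry out the resulting integral by reduction to the Euler Beta function. First, under the standing assumption \eqref{cond min non-singular}, Corollary~\ref{cor 1.2} tells us that $\mathcal{S}(m,2,\alpha,\alpha,\infty)$ is attained and that $w_\epsilon$ is a minimizer for every $\epsilon>0$. The quotient in \eqref{c0-e40} is invariant under the dilation $u\mapsto u(\lambda\,\cdot)$: since $\nabla^m_\alpha\big(u(\lambda\,\cdot)\big)(r)=\lambda^m(\nabla^m_\alpha u)(\lambda r)$, one has $\|\nabla^m_\alpha u(\lambda\,\cdot)\|^2_{L^2_\alpha}=\lambda^{2m-\alpha-1}\|\nabla^m_\alpha u\|^2_{L^2_\alpha}$ and $\|u(\lambda\,\cdot)\|^2_{L^{2^*}_\alpha}=\lambda^{-2(\alpha+1)/2^*}\|u\|^2_{L^{2^*}_\alpha}$, and the two exponents coincide exactly because $2^*=2(\alpha+1)/(\alpha-2m+1)$. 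Since $w_\epsilon=\epsilon^{-(\alpha-2m+1)/2}\,w_1(\cdot/\epsilon)$ is a scalar multiple of a dilate of $w_1$, it follows that $\mathcal{S}(m,2,\alpha,\alpha,\infty)$ equals the value of \eqref{c0-e40} at $w_1$.

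Next I would remove the gradient term by using the equation. The profile $w_\epsilon$ is an exact weak solution of \eqref{problema m} in the sense \eqref{solution weak} --- this is the role of the normalizing constant $\mathcal{P}=P(\alpha,m)=\prod_{h=-m}^{m-1}(\alpha+1+2h)$ and belongs to the classification picture behind Theorem~\ref{Teorema unicidade}; for $m=1$ one checks directly that $-\Delta_\alpha(1+r^2)^{-(\alpha-1)/2}=(\alpha-1)(\alpha+1)(1+r^2)^{-(\alpha+3)/2}$, i.e.\ $-\Delta_\alpha w_1=w_1^{2^*-1}$, while the general case is an iterated computation of $\Delta_\alpha$ on a power of $1+r^2$. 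Taking $v=w_1\in\mathcal{D}^{m,2}_\infty(\alpha)$ in \eqref{solution weak} gives $\|\nabla^m_\alpha w_1\|^2_{L^2_\alpha}=\int_0^\infty |w_1|^{2^*-2}w_1^2\,r^\alpha\,dr=\|w_1\|^{2^*}_{L^{2^*}_\alpha}$, hence by Step~1,
\[
\mathcal{S}(m,2,\alpha,\alpha,\infty)=\frac{\|\nabla^m_\alpha w_1\|^2_{L^2_\alpha}}{\|w_1\|^2_{L^{2^*}_\alpha}}=\|w_1\|^{\,2^*-2}_{L^{2^*}_\alpha},\qquad 2^*-2=\frac{4m}{\alpha-2m+1},
\]
so the whole problem collapses to computing $\|w_1\|^{2^*}_{L^{2^*}_\alpha}=\int_0^\infty w_1(r)^{2^*}r^\alpha\,dr$.

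To evaluate that integral, set $\beta=(\alpha-2m+1)/2$, so that $w_1(r)=\mathcal{P}^{\beta/(2m)}(1+r^2)^{-\beta}$ and $2^*=(\alpha+1)/\beta$, whence $w_1^{2^*}=\mathcal{P}^{(\alpha+1)/(2m)}(1+r^2)^{-(\alpha+1)}$. The substitution $t=r^2$ turns the integral into a Beta integral:
\[
\int_0^\infty \frac{r^\alpha}{(1+r^2)^{\alpha+1}}\,dr=\frac12\int_0^\infty \frac{t^{(\alpha-1)/2}}{(1+t)^{\alpha+1}}\,dt=\frac12\,B\!\Big(\tfrac{\alpha+1}{2},\tfrac{\alpha+1}{2}\Big)=\frac{\Gamma\big(\tfrac{\alpha+1}{2}\big)\Gamma\big(\tfrac{\alpha+1}{2}\big)}{2\,\Gamma(\alpha+1)},
\]
using $\int_0^\infty t^{x-1}(1+t)^{-(x+y)}\,dt=\Gamma(x)\Gamma(y)/\Gamma(x+y)$ (valid since $\alpha>-1$). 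Thus $\|w_1\|^{2^*}_{L^{2^*}_\alpha}=\tfrac12\,\mathcal{P}^{(\alpha+1)/(2m)}\,\Gamma(\tfrac{\alpha+1}{2})^{2}/\Gamma(\alpha+1)$. Finally, since $\mathcal{S}^{-1/2}=\|w_1\|^{-2m/(\alpha-2m+1)}_{L^{2^*}_\alpha}=\big(\|w_1\|^{2^*}_{L^{2^*}_\alpha}\big)^{-2m/[2^*(\alpha-2m+1)]}$ and $2^*(\alpha-2m+1)=2(\alpha+1)$, the exponent equals $-m/(\alpha+1)$; substituting the value just found and using $\mathcal{P}^{-\frac{\alpha+1}{2m}\cdot\frac{m}{\alpha+1}}=\mathcal{P}^{-1/2}$ yields
\[
\mathcal{S}^{-\frac12}=\mathcal{P}^{-\frac12}\left[\frac{2\Gamma(\alpha+1)}{\Gamma\big(\tfrac{\alpha+1}{2}\big)\Gamma\big(\tfrac{\alpha+1}{2}\big)}\right]^{\frac{m}{\alpha+1}},
\]
which is the asserted formula.

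The argument rests entirely on Corollary~\ref{cor 1.2} --- hence on Theorems~\ref{Existencia de minimizante suave} and~\ref{Teorema unicidade} together with the standing assumption \eqref{cond min non-singular}; once the extremal has been pinned down as $w_\epsilon$ with the stated normalization, nothing deep remains. The only genuinely computational ingredient is the verification that $w_\epsilon$ solves \eqref{problema m} exactly with the product $\mathcal{P}$ --- the iterated $\Delta_\alpha$ calculation on a power of $1+r^2$ --- and the one point deserving a word of care is that $v=w_1$ is admissible in \eqref{solution weak}, which is immediate from $w_1\in\mathcal{D}^{m,2}_\infty(\alpha)$.
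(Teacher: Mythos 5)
Your proposal is correct and follows essentially the same route as the paper: under assumption \eqref{cond min non-singular} the minimizer is identified with $w_1$ (via Corollary~\ref{cor 1.2} and dilation invariance), the equation \eqref{solution weak} tested against $w_1$ gives $\mathcal{S}=\|w_1\|^{2^*-2}_{L^{2^*}_\alpha}$, and the remaining integral is the Beta integral $\tfrac12 B(\tfrac{\alpha+1}{2},\tfrac{\alpha+1}{2})$. Your exponent bookkeeping ($2^*-2=4m/(\alpha-2m+1)$, the power $-m/(\alpha+1)$, and $\mathcal{P}^{-1/2}$) all checks out against the paper's computation.
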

By combining Lemma~\ref{lemma-iterationRev} below with Theorem~\ref{Theorem 1.3}, we are able to obtain the explicit value for best constant $\mathcal{S}^{-\frac{1}{2}}(\alpha,2)$ under a weaker assumption than \eqref{cond min non-singular}.
\begin{cor} \label{corom=2} If $\mathcal{S}(\alpha,2)$ admits a $0$-th nonsingular minimizer, then  best constant $\mathcal{S}^{-\frac{1}{2}}(\alpha,2)$ is given by
$$\mathcal{S}^{-\frac{1}{2}} =  \mathcal{P}^{-\frac{1}{2}}\left[\dfrac{2\Gamma(\alpha+1)}{\Gamma(\frac{\alpha+1}{2})\Gamma(\frac{\alpha+1}{2})}\right]^{\frac{2}{\alpha+1}}.$$   
\end{cor}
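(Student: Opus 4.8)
The plan is to derive Corollary~\ref{corom=2} as an immediate consequence of two ingredients already available in the paper: Lemma~\ref{lemma-iteration-non singular}, which upgrades a $0$-th nonsingular minimizer of $\mathcal{S}(2,2,\alpha,\alpha,\infty)$ to a $3$rd-nonsingular minimizer $v$ satisfying the odd-order left-zero conditions $v^{(1)}(0)=v^{(3)}(0)=0$, and Theorem~\ref{Theorem 1.3}, which computes the best constant under exactly the hypothesis \eqref{cond min non-singular}. So the argument is a short deduction: assume $\mathcal{S}(2,2,\alpha,\alpha,\infty)$ admits a $0$-th nonsingular minimizer; apply Lemma~\ref{lemma-iteration-non singular} with $m=2$ to produce a minimizer $v\in\mathcal{D}^{2,2}_{\infty}(\alpha)$ that is $3$rd-nonsingular at $r=0$ with $v^{(1)}(0)=v^{(3)}(0)=0$; observe that this is precisely condition \eqref{cond min non-singular} in the case $m=2$; then invoke Theorem~\ref{Theorem 1.3} with $m=2$ to conclude
\[
\mathcal{S}^{-\frac{1}{2}}(2,2,\alpha,\alpha,\infty)=\mathcal{P}^{-\frac{1}{2}}\left[\frac{2\Gamma(\alpha+1)}{\Gamma(\frac{\alpha+1}{2})\Gamma(\frac{\alpha+1}{2})}\right]^{\frac{2}{\alpha+1}},
\]
which is the claimed formula since $\mathcal{P}=P(\alpha,2)=\prod_{h=-2}^{1}(\alpha+1+2h)$.

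One small point worth spelling out: I would note that a minimizer for $\mathcal{S}(m,2,\alpha,\alpha,\infty)$ always exists by \cite{JN-JF}, and by Theorem~\ref{Existencia de minimizante suave} every such minimizer (being, up to normalization, a weak solution of \eqref{problema m} with $\theta=\alpha$) lies in $C^{2m}(0,\infty)$; this is what makes the notions of $\ell$-th (non)singularity meaningful and lets Lemma~\ref{lemma-iteration-non singular} even be applied. I would also remark that the passage from the $0$-th nonsingular minimizer to $v$ preserves the property of being a minimizer (the lemma is stated that way), so no loss of the extremal property occurs, and that $v$ is positive (up to a sign) as needed for Theorem~\ref{Theorem 1.3} / Corollary~\ref{cor 1.2}.

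There is essentially no genuine obstacle internal to this corollary: all the analytic work—the regularity bootstrap, the classification of $(2m-1)$-th nonsingular solutions via the Emden–Fowler-type change of variables and the arguments borrowed from \cite{S,EFJ}, the iteration scheme of Lemma~\ref{lemma-iteration-non singular}, and the explicit Gamma-function computation of the Bliss-type integral $\int_0^\infty w_\epsilon^{2^*}r^{\alpha}\,dr$ relative to $\int_0^\infty|\nabla^m_\alpha w_\epsilon|^2 r^{\alpha}\,dr$—is done elsewhere in the paper. The only thing to be careful about is bookkeeping: verifying that the hypotheses of Lemma~\ref{lemma-iteration-non singular} (namely $m=2$, $\alpha-2m+1>0$, and the existence of a $0$-th nonsingular minimizer) are exactly what is assumed here, and that its conclusion matches verbatim the $m=2$ instance of \eqref{cond min non-singular}. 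Once that matching is confirmed, Theorem~\ref{Theorem 1.3} applies with no further hypotheses and the formula follows. Thus the proof is: cite existence, cite regularity, apply Lemma~\ref{lemma-iteration-non singular}, check it yields \eqref{cond min non-singular} for $m=2$, apply Theorem~\ref{Theorem 1.3}. $\qed$
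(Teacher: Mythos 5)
Your proposal is correct and follows exactly the route the paper takes: the paper's own proof of this corollary is the one-line statement that it is a direct consequence of Lemma~\ref{lemma-iteration-non singular} and Theorem~\ref{Theorem 1.3}, and your write-up simply makes explicit the intermediate bookkeeping (rescaling to a positive solution, verifying that the lemma's conclusion is condition \eqref{cond min non-singular} for $m=2$). No discrepancy to report.
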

Assuming that $\mathcal{S}(\alpha,m)$ admits at least one $0$-th nonsingular minimizer at $r=0$, according to Theorem~\ref{Theorem 1.3}, as done in  Corollary~\ref{corom=2}, to explicitly compute the best constant $\mathcal{S}^{-\frac{1}{2}}(\alpha,m)$ for 
$m>2$, it is sufficient to prove the following conjecture which we already know to be true in the case $m=2$ (cf. Lemma~\ref{lemma-iterationRev}).

\begin{conjecture}\label{conj} The $0$-th nonsingular minimizers for $\mathcal{S}(\alpha,m)$ are necessarily  $(2m-1)$-th nonsingular at $r=0$ and satisfies the odd-order left-zero initial conditions.
\end{conjecture}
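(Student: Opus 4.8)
\medskip
\noindent\textit{A possible route to a proof.}
The plan is to establish the stronger assertion that \emph{every $0$-th nonsingular minimizer $z$ of $\mathcal{S}(m,2,\alpha,\alpha,\infty)$ is $2m$-th nonsingular at $r=0$ and satisfies $z^{(i)}(0)=0$ for every odd $i\le 2m-1$}. After replacing $z$ by $\lambda z$ with $\lambda=\mathcal{S}(m,2,\alpha,\alpha,\infty)^{1/(2^{*}-2)}>0$ — a scaling that affects neither the $0$-th nonsingularity nor the vanishing of the odd-order initial values — the minimizer solves \eqref{problema m} in the weak sense \eqref{solution weak}, so Theorem~\ref{Existencia de minimizante suave} applies and gives $z\in C^{2m}(0,\infty)$, the boundary conditions \eqref{0-Ulimites}, and the iterated representation \eqref{representação deltaj}. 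From here I would run an upward induction on the order of the polyharmonic iterate, in the spirit of the computations behind Lemma~\ref{lemma-iteration-non singular} for $m=2$.

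For $0\le k\le m$ set $g_k:=(-\Delta_{\alpha})^{m-k}z$, so $g_0=|z|^{2^{*}-2}z$, $g_m=z$, and, by \eqref{representação deltaj},
\begin{equation}\label{conj-rep}
 g_{k+1}(r)=\int_{r}^{\infty}\Big(t^{-\alpha}\int_{0}^{t}g_k(s)\,s^{\alpha}\,ds\Big)\,dt,\qquad 0\le k\le m-1 .
\end{equation}
I would prove by induction on $k$ the claim $(P_k)$: \emph{$g_k\in C^{2k}(0,\infty)$ is $2k$-th nonsingular at $r=0$ and $g_k^{(i)}(0)=0$ for every odd $i\le 2k-1$.} The base case $(P_0)$ holds because $z$ is $0$-th nonsingular and $t\mapsto|t|^{2^{*}-2}t$ is continuous (here $2^{*}>2$), so $g_0$ extends continuously to $r=0$. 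For the inductive step $(P_k)\Rightarrow(P_{k+1})$, introduce
\[
 \Theta(r):=r^{-\alpha-1}\int_{0}^{r}g_k(s)\,s^{\alpha}\,ds=\int_{0}^{1}\sigma^{\alpha}\,g_k(r\sigma)\,d\sigma ,
\]
the second equality coming from $s=r\sigma$. Differentiating under the integral sign (legitimate since $g_k^{(i)}$ is bounded near $0$ by $(P_k)$ and $\sigma^{\alpha+i}\in L^{1}(0,1)$) yields $\Theta^{(i)}(r)=\int_{0}^{1}\sigma^{\alpha+i}\,g_k^{(i)}(r\sigma)\,d\sigma$ for $0\le i\le 2k$, so $\Theta$ is $2k$-th nonsingular with $\Theta^{(i)}(0)=g_k^{(i)}(0)/(\alpha+i+1)$, which vanishes for odd $i\le 2k-1$. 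Differentiating \eqref{conj-rep} gives $g_{k+1}'(r)=-r\,\Theta(r)$, hence $g_{k+1}'(0)=0$; a second differentiation gives the key identity
\[
 g_{k+1}''(r)=\alpha\,\Theta(r)-g_k(r),\qquad\text{whence}\qquad g_{k+1}^{(j)}(r)=\alpha\,\Theta^{(j-2)}(r)-g_k^{(j-2)}(r)\quad(2\le j\le 2k+2).
\]
Since $j-2\le 2k$, the right-hand side has a finite limit as $r\to0^{+}$ for each such $j$; when $j$ is odd that limit equals $\alpha\,g_k^{(j-2)}(0)/(\alpha+j-1)-g_k^{(j-2)}(0)=0$ because $j-2$ is odd. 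Combined with $g_{k+1}\in C^{2k+2}(0,\infty)$ (it is $(-\Delta_{\alpha})^{m-k-1}$ applied to $z\in C^{2m}(0,\infty)$) and the finiteness of $g_{k+1}(0)$ (the integrand in \eqref{conj-rep} is $O(t)$ near $t=0$, while convergence at infinity is part of Theorem~\ref{Existencia de minimizante suave}), this is exactly $(P_{k+1})$. Taking $k=m$ shows that $z=g_m$ is $2m$-th nonsingular at $r=0$ with $z^{(i)}(0)=0$ for all odd $i\le 2m-1$, which in particular yields the Conjecture.

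The step I expect to be the genuine obstacle — and presumably the reason the statement is only conjectured here — is making this induction airtight and uniform in $k$ for a truly non-integer weight $\alpha>2m-1$: besides the two interchanges of limit and integral and the closed form for $g_{k+1}^{(j)}$, one has to be sure that no logarithmic resonances spoil the two-orders-per-step gain. A clean way to control this is to change variables to $\rho=r^{2}$, under which $\Delta_{\alpha}$ becomes the regular–singular operator $4\rho\,\partial_{\rho}^{2}+2(\alpha+1)\,\partial_{\rho}$, whose indicial roots at $\rho=0$ are $0$ and $-(\alpha-1)/2$; since $\alpha>2m-1\ge1$ the second root is strictly negative and hence never a nonnegative integer, which is exactly what forces the inversion of $\Delta_{\alpha}$ (subject to \eqref{0-Ulimites}) to gain precisely two orders of even regularity at the origin at each step, with no log terms. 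A secondary point to be checked carefully is that the rescaled minimizer indeed satisfies \eqref{solution weak}, so that Theorem~\ref{Existencia de minimizante suave} may be invoked verbatim; for $m=1$ (the Bliss functions \eqref{1-bliss function}) and for $m=2$ (Lemma~\ref{lemma-iteration-non singular}) this programme is already complete.
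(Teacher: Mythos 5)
This statement is posed as a \emph{conjecture} in the paper: the authors prove it only for $m=2$ (Lemma~\ref{lemma-iteration-non singular}) and explicitly state that the case $m\ge 3$ remains open, so there is no proof in the paper against which to check you. What you propose is a genuine inductive generalization of the $m=2$ argument, and as far as I can verify the scheme is sound. Your $g_k$ coincide with the paper's $w_k$ from \eqref{wk-iteration} (with $g_k=w_k$, $g_0=w_0$, $g_m=u$), your identity $g_{k+1}''=\alpha\Theta-g_k$ is exactly \eqref{w''k-integral}, and your limits $\Theta^{(i)}(0)=g_k^{(i)}(0)/(\alpha+i+1)$ reproduce, for $i=0$, the paper's computations \eqref{new-est1}--\eqref{new-est2}. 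The substitution $\Theta(r)=\int_0^1\sigma^{\alpha}g_k(r\sigma)\,d\sigma$ is the key improvement over the paper's explicit formulas for $w_k'$, $w_k''$, $w_k'''$: it makes the differentiation-to-arbitrary-order transparent, and the parity bookkeeping ($j$ odd $\Rightarrow$ $j-2$ odd $\Rightarrow$ $g_k^{(j-2)}(0)=0$ by the inductive hypothesis) closes the induction. A particularly good feature, which you should state explicitly, is that step $(P_k)\Rightarrow(P_{k+1})$ uses only derivatives of $g_k$ up to order $2k$, so that at $k=0$ nothing beyond continuity of $g_0=|z|^{2^*-2}z$ is ever required; the two-orders-per-inversion gain is exactly calibrated to the limited smoothness of the nonlinearity.

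Before claiming the conjecture, you must tighten a few points. First, the rescaling: a minimizer normalized by $\|z\|_{L^{2^*}_{\alpha}}=1$ satisfies \eqref{a0-e20-novo}, so $\mathcal{S}^{1/(2^*-2)}z$ is a weak solution of \eqref{problema m}; only then do Theorem~\ref{Existencia de minimizante suave} and the representation \eqref{representação deltaj} apply, and you should check that the required $0$-th nonsingularity is preserved (it is, trivially). Second, the repeated differentiation under the integral sign needs a written justification at each order $i\le 2k$ (a fixed dominating function $\sigma^{\alpha+i}\sup_{(0,\delta)}|g_k^{(i)}|$ suffices, using that $(P_k)$ gives boundedness of $g_k^{(i)}$ near $0$); and note that the paper's notion of ``$\ell$-th nonsingular'' only asks for \emph{limits} of derivatives, so you do not actually need $\Theta$ or $g_{k+1}$ to be classically differentiable at $r=0$, which simplifies matters. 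Third, your closing paragraph about indicial roots is a useful sanity check but is not part of the proof and should not be presented as a gap: your explicit formulas $g_{k+1}^{(j)}=\alpha\Theta^{(j-2)}-g_k^{(j-2)}$ already rule out logarithmic terms once justified. I see no step that fails; but given that the authors state they could not push their $m=2$ computation to $m\ge3$, you should write the induction out in complete detail (in particular the base case $(P_0)\Rightarrow(P_1)$ and the top case $j=2k+2$) before asserting that the conjecture is settled.
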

The paper is organized as follows. Section~\ref{sec2} is devoted to a discussion on the weighted Sobolev spaces $W^{m,p}_{T}$, $X^{m,p}_{T}$, and $\mathcal{D}^{m,p}_{T}(\alpha)$, as well as to the proof of the regularity result in Theorem~\ref{Existencia de minimizante suave}. The classification result Theorem~\ref{Teorema unicidade} and Corollary~\ref{cor 1.2} are proven in Section~\ref{sec3}. In Section~\ref{sec4} we show the explicit value of the best constant $\mathcal{S}^{-\frac{1}{2}}$ given by Theorem~\ref{Theorem 1.3} and Corollary~\ref{corom=2}.

\section{Weighted Sobolev spaces, regularity  and  Minimizers behavior}
\label{sec2}
In this section, we provide properties of the weighted Sobolev spaces $W^{m,p}_{T}$, $X^{m,p}_{T}$, and $\mathcal{D}^{m,p}_{T}(\alpha)$ and prove the regularity result in Theorem~\ref{Existencia de minimizante suave}. We also call attention to an estimate for sign-changing solutions of \eqref{problema m} obtained in Lemma~\ref{sinal de minimizante}.

Firstly, for real numbers $\alpha_{j}> -1$ with $j=0,1, \cdots, m$, we set
$$W^{m,p}_{T}=W^{m,p}_{T}(\alpha_{0}, \cdots, \alpha_{m}) = \{u\in AC^{m-1}_{loc}(0,T)\, :\,\, u^{(j)} \in L^{p}_{\alpha_{j}}, \quad j=0, 1, \cdots, m\}, $$
where $0<T\le \infty$. We recall  that $W^{m,p}_{T}$ is a Banach space endowed with the norm
\begin{equation}\label{c0-norma1}
    \|u\|_{W^{m,p}_{T}} = \left( \sum_{j=0}^{m} \| u^{(j)}\|_{L^{p}_{\alpha_{j}}}^{p} \right)^{\frac{1}{p}}.
\end{equation}
Alternatively, we can define $W^{m,p}_{T}$ as the subspace of the functions $u\in L^{1}_{loc}(0,T)$ which have the $j$-$th$ distributional
derivative $u^{(j)}\in L_{\alpha_{j}}^{p}$,  for all $j=0,1, \cdots, m$, see \cite[Proposition A]{DoLuRa2}. For the unbounded case $T=\infty$ and assuming the transition condition
\begin{equation}\label{trasition-C}
  \alpha_{j-1}\ge \alpha_{j}-p,\quad j=1,\cdots, m
\end{equation} 
the authors in \cite[Lemma~2.1]{DoLuRa2} were able to  provide a third characterization for $W^{m,p}_{\infty}$ as the closure of the set $\Upsilon=\{ u_{|_{[0, \infty)}} \,: \, u \in C^{\infty}_{c}(\mathbb{R}) \}$ under the norm \eqref{c0-norma1}, where $C^{\infty}_{c}(\mathbb{R})$ is the space of real smooth functions with  compact support in $\mathbb{R}$.

On the other hand, we define $X^{m, p}_{T} = X^{m,p}_{T}(\alpha_{0}, \alpha_{1}, \cdots, \alpha_{m})$ as the closure of the set $$W_{0,T}=AC^{m-1}_{\mathrm{R}}(0,T)\cap W^{m,p}_{T}(\alpha_{0}, \cdots, \alpha_{m})$$ under the norm \eqref{c0-norma1}.

We have  $X^{m,p}_{T} \subset W^{m,p}_{T}$, $0<T\leq \infty$. Additionally, if $T= \infty$ and the weights satisfy \eqref{trasition-C}, it follows from \cite[Lemma~2.1]{DoLuRa2} that both $X^{m,p}_{\infty}$ and $W^{m,p}_{\infty}$ are the closure of $\Upsilon$ under the norm \eqref{c0-norma1}, and thus $X^{m,p}_{\infty}=W^{m,p}_{\infty}$. On the other hand, if $0<T<\infty$ and \eqref{trasition-C} hold, the expression
\begin{equation}\label{norma-diricNOVA}  \|u^{(m)}\|_{L^{p}_{\alpha_{m}}} = \left( \int^{T}_{0} |u^{(m)}|^{p} r^{\alpha_{m}}dr\right)^{\frac{1}{p}}
\end{equation}
defines a norm on $X^{m,p}_{T}(\alpha_0,\cdots, \alpha_m)$ which is equivalent to the norm \eqref{c0-norma1}. Also, under these assumptions, we can consider a third  equivalent norm on  $X^{m,p}_{T}$ given by     
\begin{equation}\label{norma-gradNOVA}
    \|u\|_{\nabla^{m}_{\alpha_{m}}}= \left(\int_{0}^{T}|\nabla^{m}_{\alpha_{m}} u|^{p}r^{\alpha_m}dr\right)^{\frac{1}{p}},
\end{equation}
where  the operator $\nabla^{m}_{\alpha_{m}}$ is given in \eqref{OpDelta}.

We note that the behavior of a function $u\in AC^{m-1}_{loc}(0,T)$ that is admitted in the space $X^{m,p}_{T}(\alpha_0,\cdots, \alpha_m)$ is influenced by the parameters  $\alpha_0,\cdots, \alpha_m,m$ and $p$. However, this dependence is well structured.  In fact, we can distinguish  three main cases:

\paragraph*{I}\textit{Sobolev: $\alpha_m-mp+1>0$.} In this case, under the condition 
$\theta\ge \alpha_{m}-mp$ we have the continuous embedding
	
 \begin{equation}\label{e7full X}
      X^{m,p}_T (\alpha_0,\cdots, \alpha_m) \hookrightarrow L^{q}_{\theta} \quad \text{if}\,\,p\leq q \leq p^{*}, 
  \end{equation}
 where  
 \begin{equation}
     p^{*} = p^{*}(m, p, \theta, \alpha_{m})=\frac{(\theta+1)p}{\alpha_m-mp+1}
 \end{equation}
 is the critical exponent. Moreover, the embedding is compact if $p\le q<p^{*}$.  Here,  if $T=\infty$ we are assuming that $\alpha_{j} \geq \alpha_{m}-(m-j)p$, for $j=0, 1, \cdots, m-1$ and $\alpha_{0} \ge \theta$. Also, the compact embedding is ensured for $q=p$ only if $\alpha_0>\theta$.  See \cite{DoLuHa, DoLuRa2,JAJ} for more details. 
 
\paragraph*{II}\textit{Trudinger-Moser:  $\alpha_m-mp+1=0.$} Now, under the condition 
$\theta>-1$, we have the compact embedding	
 \begin{equation}\label{TM-Xemb}
       X^{m,p}_T(\alpha_0,\cdots, \alpha_m) \hookrightarrow L^{q}_{\theta} \quad \text{if}\,\,p\leq q <\infty.
  \end{equation}
If $T=\infty$ we are supposing $\alpha_{j} \geq \alpha_{m}-(m-j)p$, for $j=0, 1, \cdots, m-1$ and $\alpha_{0} \ge \theta$. Also, the embedding is proved for $q=p$ only if $\alpha_0>\theta$. More precisely, the	maximal growth for a function $\phi(s), s\in\mathbb{R}$ such that $\phi\circ u\in L^{1}_{\theta}$,  for all $u\in  X^{m,p}_T$ is of the exponential-type, see \cite{DoLuHa, CCM02017, JAJ, doOdeOliveira2014} for $0<T<\infty$ and \cite{DoLuRa3, Ibero1, doOdeOliveira2014,JFO} for $T=\infty$.

\paragraph*{III}\textit{Morrey:  $\alpha_m-mp+1<0$.} In this case, for  $0<T<\infty$ and $\alpha_m\ge 0$,  the authors in \cite{DoLuHa} were able to show the continuous embedding 
 \begin{equation}\label{M-embedding}
    X^{m,p}_T(\alpha_0,\cdots, \alpha_m)\hookrightarrow  C^{m-\lfloor \frac{\alpha_m+1}{p}\rfloor -1,\gamma}([0,T]),
 \end{equation}
where $\gamma\in (0,1)$ if $\frac{\alpha_m+1}{p}\in \mathbb{Z}$ and 
\begin{equation}\nonumber
    \gamma=\min\Big\{1+\Big\lfloor \frac{\alpha_m+1}{p}\Big\rfloor-\frac{\alpha_m+1}{p}, 1-\frac{1}{p}\Big\}, \;\;\mbox{if}\;\; \frac{\alpha_m+1}{p}\not\in \mathbb{Z}.
\end{equation}
Analogous results for cases $\mathrm{I}$, $\mathrm{II}$ and $\mathrm{III}$ are proven by \cite{JFO,DoLuHa,DoLuRa2} for the weighted Sobolev spaces without zero boundary conditions $W^{m,p}_T$.

We recall that the norm \eqref{norma-gradNOVA} is related to  operator in \eqref{laplace-geral}, but for $T=\infty$ neither \eqref{norma-gradNOVA} nor \eqref{norma-diricNOVA} define norm on $ X^{m,p}_{\infty}(\alpha_0,\cdots, \alpha_m)$ except for the specific choices $\alpha_{\ell}=\alpha_m-(m-\ell)p$, see \cite[Corollary 2.5]{JN-JF}. In order to overcome this,  in \cite{JN-JF} the authors introduced the weighted Sobolev space $\mathcal{D}^{m,p}_{T}(\alpha)$ as follows: For $p\geq 1$, $\alpha>-1$, $0<T\le \infty$  and $m \in \mathbb{N}$  we set 
$$D_{0,T}(\alpha)=D_{0,T}(\alpha,m,p)=\big\{u\in AC^{m-1}_{\mathrm{R}}(0,T)\; :\; u^{(m)}\in L^{p}_{\alpha}(0,T)\big\}.$$ 
Then, $\mathcal{D}^{m,p}_{T}(\alpha)$ is the completion of $D_{0,T}(\alpha,m,p)$ under the norm  \eqref{norma-diricNOVA} with $\alpha_m=\alpha$. Under the Sobolev condition $\alpha-mp+1>0$, the expression \eqref{norma-gradNOVA}  also defines norm on $\mathcal{D}^{m,p}_{T}(\alpha)$ which is equivalent to \eqref{norma-diricNOVA}, see \cite[Proposition 2.15]{JN-JF}. In addition, we have $X^{m,p}_{\infty}(\alpha_0,\cdots, \alpha_m)=\mathcal{D}^{m,p}_{\infty}(\alpha_m)$ for the suitable choices $\alpha_{\ell}=\alpha_m-(m-\ell)p$, see \cite[Corollary 2.5]{JN-JF}. In general, we have inclusion 
$X^{m,p}_{\infty}(\alpha_0,\cdots, \alpha_m)\subset\mathcal{D}^{m,p}_{\infty}(\alpha_m)$, but the reverse inclusion may not hold true. For example, the function $u(r) = (1+r^2)^{-\alpha/4}$ with $\alpha>1$ belongs to  $\mathcal{D}^{1,2}_{\infty}(\alpha)$ but it does not belong to $X^{1, 2}_{\infty}(\alpha-1, \alpha)$. 

We observe that \cite[Theorem 1.1]{JN-JF} provides the continuous embedding 
\begin{equation}\label{D-embNOVO}
    \mathcal{D}^{m,p}_{T}(\alpha)\hookrightarrow L^{p^{*}}_{\theta},\;\;\mbox{with}\;\; \theta\ge \alpha-mp\;\;\mbox{and}\;\;\alpha-mp+1>0
\end{equation} 
which allow us  to consider the general minimizer problem
\begin{equation}\label{c0-e40NOVO}
  \mathcal{S}=\mathcal{S}(\alpha,m,p,\theta, T)  = \inf \left\{ \frac{\|\nabla_{\alpha}^{m} u \|^{p}_{L_{\alpha}^{p}}}{\|  u \|^{p}_{L_{\theta}^{p^{*}}}}\, : \,\, u \in \mathcal{D}^{m,p}_{T}(\alpha)\setminus\{0\}\right\}.
\end{equation}
Problem~\ref{c0-e40NOVO} was recently investigated in \cite{JN-JF}.

Lastly, we present below a table showing the relations among the spaces $X^{m,p}_T(\alpha_0,\cdots, \alpha_m)$, $W^{m,p}_T(\alpha_0,\cdots, \alpha_m)$ and $\mathcal{D}^{m,p}_{T}(\alpha_m)$.
\begin{table}[h!]
\centering
\begin{tabular}{| m{4cm} | m{7cm} | m{4cm} |} 
 \hline
 \multicolumn{1}{|c|}{$ T>0 $} &  \multicolumn{1}{|c|}{$\alpha_0,\cdots, \alpha_m$} &  \multicolumn{1}{|c|}{Relationship}\\  
 \hline
 \multirow{1}{4em}{arbitrary} & \multirow{1}{4em}{arbitrary} & $X^{m,p}_{T} \subset \mathcal{D}^{m,p}_{T}\cap W^{m,p}_{T}$\\ 
 \hline 
 $T<\infty$ & $\alpha_{j-1}\geq \alpha_{j}-p$ & $X^{m,p}_{T} = \mathcal{D}^{m,p}_{T}$\\  
 \hline  
 \multirow{2}{4em}{$T=\infty$} & $\alpha_{j-1}\geq \alpha_{j}-p$ & $W^{m,p}_{\infty} = X^{m,p}_{\infty} \subset \mathcal{D}^{m,p}_{\infty}$\\
 & $\alpha_{j} = \alpha_m - (m-j)p$ and $\alpha_m - mp + 1 > 0$ & $W^{m,p}_{\infty} = X^{m,p}_{\infty} = \mathcal{D}^{m,p}_{\infty}$\\
 \hline
\end{tabular}
\end{table}

We now prove a pointwise estimate along the same lines as the radial Lemma in \cite{Strauss}.
\begin{lemma}\label{LR2}
    Let $u \in \mathcal{D}^{m,p}_{\infty}(\alpha)$ with $p\geq 2$. Assume that $\alpha-mp+1>0$, then
    \begin{equation}\label{PEradial}
        |u(r)|\leq c r^{-\frac{(\alpha-mp+1)}{p}} \|u\|_{\nabla_{\alpha}^{m}}, \quad \forall\, r>0
    \end{equation}
for some $c>0$ (independent of $u$).
\end{lemma}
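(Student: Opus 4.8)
The plan is to prove the pointwise bound first for $u$ in the dense subclass $D_{0,\infty}(\alpha,m,p)$ — where the right boundary conditions $u^{(j)}(\infty)=0$, $j=0,\dots,m-1$, and local absolute continuity are available — and then transfer it to the completion $\mathcal{D}^{m,p}_{\infty}(\alpha)$; the argument is the classical Strauss-type radial estimate, carried out by integrating $m$ times from infinity. By \cite[Proposition 2.15]{JN-JF}, under the standing hypothesis $\alpha-mp+1>0$ the norm \eqref{norma-diric} (i.e.\ $\|u^{(m)}\|_{L^{p}_{\alpha}}$) is equivalent to $\|\cdot\|_{\nabla^{m}_{\alpha}}$ on that space, so it suffices to produce a constant $C=C(\alpha,m,p)$ with $|u(r)|\le C\,r^{-(\alpha-mp+1)/p}\,\|u^{(m)}\|_{L^{p}_{\alpha}}$ for all $u\in D_{0,\infty}(\alpha,m,p)$. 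The passage to $\mathcal{D}^{m,p}_{\infty}(\alpha)$ is then routine: for an approximating sequence $u_n\in D_{0,\infty}$, the uniform estimate applied to $u_n-u_k$ shows $(u_n)$ is Cauchy uniformly on compact subsets of $(0,\infty)$, while the continuous embedding \eqref{D-emb} forces the locally uniform limit to coincide a.e.\ with $u$; passing to the limit preserves the inequality.

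For $u\in D_{0,\infty}(\alpha,m,p)$ I would reconstruct $u$ from $u^{(m)}$ by successive integration from infinity. Put $f=u^{(m)}\in L^{p}_{\alpha}$ and $e_j=(\alpha-jp+1)/p$ for $j=0,1,\dots,m$. Since $u^{(m-1)}\in AC_{loc}(0,\infty)$ with a.e.\ derivative $f$ and $u^{(m-1)}(\infty)=0$, one has $u^{(m-1)}(r)=-\int_r^{\infty}f(t)\,dt$; Hölder's inequality on $(r,\infty)$ — legitimate since $e_1>0$, i.e.\ $\alpha>p-1$ — together with the elementary evaluation of $\int_r^{\infty}t^{-\alpha/(p-1)}\,dt$ gives $|u^{(m-1)}(r)|\le A_1\,r^{-e_1}\,\|f\|_{L^{p}_{\alpha}}$. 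Then I iterate: whenever a bound $|u^{(m-j)}(r)|\le A_j\,r^{-e_j}\,\|f\|_{L^{p}_{\alpha}}$ is available with $e_j>1$, the function $u^{(m-j)}$ is integrable near infinity, so $u^{(m-j-1)}(r)=-\int_r^{\infty}u^{(m-j)}(t)\,dt$ (again using the right boundary condition and local absolute continuity of $u^{(m-j-1)}$), and integrating $r^{-e_j}$ yields $|u^{(m-j-1)}(r)|\le A_{j+1}\,r^{-e_{j+1}}\,\|f\|_{L^{p}_{\alpha}}$ with $e_{j+1}=e_j-1$ and $A_{j+1}=A_j/(e_j-1)$. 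After $m$ steps this produces $|u(r)|\le A_m\,r^{-e_m}\,\|f\|_{L^{p}_{\alpha}}$, which is exactly the claim since $e_m=(\alpha-mp+1)/p$.

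The single point that genuinely has to be checked — and the one place the hypothesis is consumed — is that every integration from infinity converges, i.e.\ that $e_j>1$ for $j=1,\dots,m-1$ (plus $e_1>0$ for the initial Hölder step, which is weaker). Because $e_j$ is decreasing in $j$, the binding requirement is $e_{m-1}>1$, and this is precisely equivalent to $\alpha-mp+1>0$; thus the Sobolev condition is exactly what the argument needs. Everything else — the explicit constants $A_j$, the Hölder estimate, and the identification of the abstract completion with honest functions via \eqref{D-emb} — is bookkeeping, and I do not anticipate any serious obstacle beyond keeping track of it.
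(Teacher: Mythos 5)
Your argument is correct, but it is not the route the paper takes. The paper's proof is a two-line reduction: it invokes the identification $\mathcal{D}^{m,p}_{\infty}(\alpha)=X^{m,p}_{\infty}(\beta_0,\dots,\beta_m)$ with $\beta_j=\alpha-(m-j)p$ together with the Hardy-type bounds $\|u^{(j)}\|_{L^{p}_{\beta_j}}\le C\|u^{(m)}\|_{L^{p}_{\alpha}}$ from \cite[Corollary 2.5]{JN-JF}, and then applies the known first-order radial (Strauss-type) lemma \cite[Lemma 2.3]{JFO} in $X^{1,p}_{\infty}(\beta_0,\beta_1)$, whose decay exponent $\gamma=\frac{\beta_1+(p-1)\beta_0}{p^2}$ evaluates to $\frac{\alpha-mp+1}{p}$. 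You instead give a self-contained proof by integrating $u^{(m)}$ exactly $m$ times from infinity on the dense class $D_{0,\infty}(\alpha,m,p)$, using the right-hand boundary conditions $u^{(j)}(\infty)=0$, one Hölder step to start the induction, and the monotone chain $e_{j+1}=e_j-1$; your bookkeeping is right (the binding convergence requirement $e_{m-1}>1$ is precisely $\alpha-mp+1>0$, and $e_1>0$ follows from it), and the transfer to the completion via uniform Cauchyness on $[\delta,\infty)$ plus a.e.\ identification through \eqref{D-emb} is sound. What each approach buys: the paper's is shorter and delegates all analysis to previously established lemmas, while yours is elementary, produces explicit constants $A_j$, and only uses the top-order norm $\|u^{(m)}\|_{L^p_\alpha}$ directly rather than routing through the intermediate weighted norms — at the cost of having to carry out the density/limit argument explicitly (which the paper's route also quietly relies on through the cited identifications).
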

\begin{proof} From  \cite[Corollary 2.5]{JN-JF}, we have $\mathcal{D}^{m,p}_{\infty}(\alpha) = X^{m,p}_{\infty}(\beta_{0}, \cdots, \beta_{m})$  and
\begin{equation}\label{TTT}
      \begin{aligned}
\|u^{(j)}\|_{L^{p}_{\beta_j}} \le  C \|u^{(m)}\|_{L^{p}_{\alpha}}, \;\;\mbox{for all}\;\;  u \in \mathcal{D}^{m,p}_{\infty}(\alpha)
     \end{aligned}
\end{equation}
where  $\beta_{j}=\alpha-(m-j)p$. Now, for $u\in X^{m,p}_{\infty}(\beta_{0}, \cdots, \beta_{m})\subset X^{1,p}_{\infty}(\beta_{0}, \beta_{1})$, \cite[Lemma 2.3]{JFO} ensures
\begin{equation}\label{RTTT}
    |u(r)|\leq C_{1} r^{-\gamma}\Big(\|u^{(0)}\|_{L^{p}_{\beta_0}}+\|u^{(1)}\|_{L^{p}_{\beta_1}}\Big)^{\frac{1}{p}}, \quad \forall\, r>0
\end{equation}
where $$\gamma=\frac{\beta_1+(p-1)\beta_0}{p^2}=\frac{\alpha-mp+1}{p}.$$
By combining  \eqref{TTT}, \eqref{RTTT} and \cite[Proposition 2.15]{JN-JF} we conclude that \eqref{PEradial} holds.
\end{proof}
\begin{lemma}\label{lemma crucial D} Assume $p\ge 1$,  $\alpha>-1$ and  $m \in \mathbb{N}$ with $\alpha-mp+1>0.$ Then, the set $\Upsilon=\{ u_{|_{[0, \infty)}} \,: \, u \in C^{\infty}_{c}(\mathbb{R}) \}$ is dense in $\mathcal{D}^{m,p}_{\infty}(\alpha)$.
\end{lemma}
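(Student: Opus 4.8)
The plan is to exploit the fact, recalled just above in the excerpt, that under the hypothesis $\alpha-mp+1>0$ one has the identification $\mathcal{D}^{m,p}_{\infty}(\alpha)=X^{m,p}_{\infty}(\beta_0,\cdots,\beta_m)$ with $\beta_j=\alpha-(m-j)p$ (see \cite[Corollary 2.5]{JN-JF}), together with the equivalence of the norm \eqref{norma-diric} (with $\alpha$ in place of $\alpha_m$) and the full norm \eqref{c0-norma1} on this space (see \cite[Proposition 2.15]{JN-JF}). With these two identifications in hand, the lemma reduces to showing that $\Upsilon$ is dense in $X^{m,p}_{\infty}(\beta_0,\cdots,\beta_m)$ with respect to the norm \eqref{c0-norma1}. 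This is precisely the content of \cite[Lemma~2.1]{DoLuRa2}, provided the transition condition \eqref{trasition-C} holds for the weights $\beta_0,\cdots,\beta_m$; so the first step is to verify that condition.

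First I would check \eqref{trasition-C} for the exponents $\beta_j=\alpha-(m-j)p$. A direct computation gives $\beta_{j-1}=\alpha-(m-j+1)p=\beta_j-p$, so in fact equality holds in \eqref{trasition-C} for every $j=1,\cdots,m$, and the transition condition is satisfied. Second, I would also confirm the standing requirement that each weight exceed $-1$: by construction $\beta_j\ge\beta_0=\alpha-mp>\alpha-mp>-1$ by the Sobolev hypothesis $\alpha-mp+1>0$, so $\beta_j>-1$ for all $j$, and the weighted Lebesgue spaces $L^p_{\beta_j}$ are well defined. Third, invoking \cite[Lemma~2.1]{DoLuRa2}, the space $X^{m,p}_{\infty}(\beta_0,\cdots,\beta_m)$ coincides with the closure of $\Upsilon$ in the norm \eqref{c0-norma1}; in particular $\Upsilon$ is dense there. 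Finally, using \cite[Corollary 2.5]{JN-JF} to identify this space with $\mathcal{D}^{m,p}_{\infty}(\alpha)$ as sets, and \cite[Proposition 2.15]{JN-JF} to see that on $\mathcal{D}^{m,p}_{\infty}(\alpha)$ the defining norm \eqref{norma-diric} is equivalent to \eqref{c0-norma1}, density in one norm transfers to density in the other, and the conclusion follows.

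I do not expect a serious obstacle here: the argument is essentially a bookkeeping assembly of three already-available facts from \cite{JN-JF} and \cite{DoLuRa2}. The only point requiring genuine care is the bridge between the two norms — one must be sure that \cite[Proposition 2.15]{JN-JF} indeed yields a two-sided equivalence of \eqref{norma-diric} and \eqref{c0-norma1} on all of $\mathcal{D}^{m,p}_{\infty}(\alpha)$ (and not merely on a dense subclass or for the $p=2$ case), since the density statement is norm-dependent a priori; once that is in place, the transfer of density is immediate because equivalent norms induce the same topology and hence the same closures. A secondary, even more routine, point is to note that $\Upsilon\subset D_{0,R}(\alpha,m,p)$ with $R=\infty$, i.e.\ that restrictions of $C^\infty_c(\mathbb{R})$-functions to $[0,\infty)$ satisfy the right-hand (here: at $r=\infty$) homogeneous boundary conditions — which is trivial since such functions vanish identically near $\infty$.
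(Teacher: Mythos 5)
Your proposal is correct and follows essentially the same route as the paper: identify $\mathcal{D}^{m,p}_{\infty}(\alpha)$ with $X^{m,p}_{\infty}(\beta_0,\cdots,\beta_m)$ for $\beta_j=\alpha-(m-j)p$ via \cite[Corollary 2.5]{JN-JF} and then invoke \cite[Lemma~2.1]{DoLuRa2}; your extra verifications (the transition condition holding with equality, $\beta_j>-1$, and the norm equivalence from \cite[Proposition 2.15]{JN-JF}) are exactly the bookkeeping the paper leaves implicit. Note only the harmless typo in your inequality chain, which should read $\beta_j\ge\beta_0=\alpha-mp>-1$.
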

\begin{proof}
From \cite[Corollary 2.5, Remark 2.6]{JN-JF} we have $\mathcal{D}_{\infty}^{m,p}(\alpha)=X_{\infty}^{m,p}(\alpha_0, \cdots,\alpha_m)$, for the choices $\alpha_{j} = \alpha - (m-j)p$ with $j=0,1, \cdots, m$. 
Then, the result follows from \cite[Lemma~2.1]{DoLuRa2}.
\end{proof}

The next result plays an important role in the proof of the Theorem~\ref{Existencia de minimizante suave}.
\begin{lemma} \label{lemma-iteration} Let $u \in \mathcal{D}^{m,2}_{\infty}(\alpha)$ and $1\le k\le m$. Define $w_k:(0,\infty)\to \mathbb{R}$  given by
\begin{equation}\label{wk-iteration}
 w_{0}=|u|^{2^{*}-2}u \;\;\mbox{and}\;\;   w_{k}(r) = \int_{r}^{\infty} t^{-\alpha}\left( \int_{0}^{t}s^{\alpha} w_{k-1}(s)  \,ds\right)dt \quad (k\ge 1)
\end{equation}
Then, for $r>0$, the following properties hold:
\begin{align}
     &|w_{k}(r)| \leq C r^{- \frac{\alpha+2m+1-4k}{2}}\|u\|^{2^{*}-1}_{\nabla^{m}_{\alpha}}\label{wk-estimate}\\
     & |w'_{k}(r)| \leq C r^{- \frac{\alpha+2m+3-4k}{2}}\|u\|^{2^{*}-1}_{\nabla^{m}_{\alpha}}\label{w'k-estimate}\\
     & w_{k}\in C^{2k}(0,\infty) \label{wkc2k}\\
      & (-\Delta_{\alpha})^{j}w_{k}=w_{k-j}, \;\;  1\le j\le k. \label{Dita}
\end{align}
\end{lemma}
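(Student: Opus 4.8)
The plan is to argue by induction on $k$, recording as a base case an estimate for $w_0$. Since $u\in\mathcal{D}^{m,2}_{\infty}(\alpha)$, Lemma~\ref{LR2} gives $|u(r)|\le c\,r^{-(\alpha-2m+1)/2}\|u\|_{\nabla^{m}_{\alpha}}$ for all $r>0$; raising this to the power $2^{*}-1$ and using the identity $(\alpha-2m+1)(2^{*}-1)=\alpha+2m+1$ yields $|w_{0}(r)|\le C\,r^{-(\alpha+2m+1)/2}$, which is precisely the $k=0$ instance of \eqref{wk-estimate}. Moreover, since $\mathcal{D}^{m,2}_{\infty}(\alpha)=X^{m,2}_{\infty}(\alpha_{0},\cdots,\alpha_{m})\subset W^{m,2}_{\infty}$ (cf.\ the proof of Lemma~\ref{lemma crucial D}), we have $u\in AC^{m-1}_{loc}(0,\infty)\subset C^{m-1}(0,\infty)$, and since $2^{*}>2$ the map $t\mapsto|t|^{2^{*}-2}t$ is $C^{1}$, so $w_{0}\in C^{0}(0,\infty)$. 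This starts both the quantitative and the regularity inductions.

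For the inductive step, assume $|w_{k-1}(r)|\le C\,r^{-(\alpha+2m+5-4k)/2}$ and $w_{k-1}\in C^{2k-2}(0,\infty)$. The first task is to check convergence of the two nested integrals in \eqref{wk-iteration}. Near $r=0$ the integrand of the inner integral is $O\big(s^{(\alpha-2m-5+4k)/2}\big)$, which is integrable exactly when $\alpha-2m+4k>3$; since $k\ge1$, this follows from the Sobolev condition $\alpha-2m+1>0$, and it gives $\int_{0}^{t}s^{\alpha}w_{k-1}(s)\,ds=O\big(t^{(\alpha-2m-3+4k)/2}\big)$. Hence the integrand of the outer integral is $O\big(t^{(-\alpha-2m-3+4k)/2}\big)$, which is integrable at $\infty$ exactly when $4k<\alpha+2m+1$; since $k\le m$ this again follows from $\alpha-2m+1>0$, the borderline case $k=m$ being precisely the Sobolev threshold. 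Carrying out the two elementary power-function integrations produces $|w_{k}(r)|\le C\,r^{-(\alpha+2m+1-4k)/2}$, which is \eqref{wk-estimate}. Since $t\mapsto t^{-\alpha}\int_{0}^{t}s^{\alpha}w_{k-1}(s)\,ds$ is continuous on $(0,\infty)$ and the outer integral converges, the fundamental theorem of calculus gives $w_{k}'(r)=-r^{-\alpha}\int_{0}^{r}s^{\alpha}w_{k-1}(s)\,ds$, and the inner-integral bound above immediately yields \eqref{w'k-estimate}.

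The regularity \eqref{wkc2k} is then read off from this representation: with $h(t)=\int_{0}^{t}s^{\alpha}w_{k-1}(s)\,ds$, the inductive hypothesis $w_{k-1}\in C^{2k-2}$ and the smoothness of $t\mapsto t^{\pm\alpha}$ on $(0,\infty)$ give $h'(t)=t^{\alpha}w_{k-1}(t)\in C^{2k-2}$, hence $h\in C^{2k-1}$, hence $r^{-\alpha}h(r)\in C^{2k-1}$, hence $w_{k}'(r)=-r^{-\alpha}h(r)\in C^{2k-1}$ and $w_{k}\in C^{2k}(0,\infty)$. Finally, for \eqref{Dita}, multiplying $w_{k}'(r)=-r^{-\alpha}h(r)$ by $r^{\alpha}$, differentiating, and dividing by $r^{\alpha}$ gives $\Delta_{\alpha}w_{k}=r^{-\alpha}(r^{\alpha}w_{k}')'=-w_{k-1}$ by \eqref{laplace-geral}, i.e.\ $(-\Delta_{\alpha})w_{k}=w_{k-1}$; iterating this (legitimate since every $w_{k-i}$ with $k-i\ge1$ is at least $C^{2}$ by \eqref{wkc2k}) yields $(-\Delta_{\alpha})^{j}w_{k}=w_{k-j}$ for $1\le j\le k$.

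The only genuinely delicate point is the exponent bookkeeping ensuring convergence of both integrals; everything else amounts to one power-function integration at each of the two stages and a routine differentiation-under-the-integral argument for the regularity and for the identity $(-\Delta_{\alpha})w_{k}=w_{k-1}$.
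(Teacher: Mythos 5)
Your proof is correct and follows essentially the same inductive scheme as the paper: iterated power-law integration based on the representation $w_k'(r) = -r^{-\alpha}\int_0^r s^{\alpha}w_{k-1}(s)\,ds$, with the same exponent bookkeeping for convergence at $0$ and at $\infty$, the same derivation of $(-\Delta_{\alpha})w_k = w_{k-1}$, and an equivalent (arguably cleaner) bootstrap for $w_k\in C^{2k}$. The only real difference is at the base: you bound $w_0$ pointwise via the radial estimate of Lemma~\ref{LR2}, so the induction starts uniformly at $k=0$, whereas the paper bounds $\int_0^t w_0 s^{\alpha}\,ds$ directly by H\"older's inequality and the embedding $\mathcal{D}^{m,2}_{\infty}(\alpha)\hookrightarrow L^{2^{*}}_{\alpha}$ — both yield the identical inner-integral bound $O\big(t^{(\alpha-2m+1)/2}\big)$.
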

\begin{proof}
    To prove \eqref{wk-estimate} and \eqref{w'k-estimate}, we will proceed by induction. If $k=1$, the H\"{o}lder inequality and the embedding $\mathcal{D}^{m,2}_{\infty}(\alpha) \hookrightarrow L^{2^{*}}_{\alpha}$ yield
 \begin{equation}\label{afirmacao 1 est}
     \left|\int_{0}^{t} w_{0}(s) s^{\alpha}\, ds\right| \leq \|u\|^{2^{*}-1}_{L^{2^{*}}_{\alpha}}\left(\dfrac{t^{\alpha+1}}{\alpha+1}\right)^{\frac{1}{2^{*}}}\le Ct^{\frac{\alpha-2m+1}{2}}\|u\|^{2^{*}-1}_{\nabla^{m}_{\alpha}}.
 \end{equation}
Consequently, there is  $C_1=C_1(\alpha, m)>0$  such that 
$$|w_{1}(r)| \leq C\|u\|^{2^{*}-1}_{\nabla^{m}_{\alpha}} \int_{r}^{\infty} t^{-\frac{\alpha+2m-1}{2}}\, dt = C_1r^{- \frac{\alpha+2m-3}{2}}\|u\|^{2^{*}-1}_{\nabla^{m}_{\alpha}},$$
which represents \eqref{wk-estimate} for $k=1$. Assume  that \eqref{wk-estimate} holds for $1\leq k<m$. Thus, since $\alpha-2m+1+4k\ge \alpha-2m+1>0$ we have 
\begin{equation*}
    \left|\int_{0}^{t} w_{k}(s) s^{\alpha}\, ds\right| \le
     C\|u\|^{2^{*}-1}_{\nabla^{m}_{\alpha}}\int_{0}^{t} s^{\frac{\alpha-2m-1+4k}{2}}\, ds
    =Ct ^{\frac{\alpha-2m+1+4k}{2}}\|u\|^{2^{*}-1}_{\nabla^{m}_{\alpha}}.
\end{equation*}
By using $\alpha+2m+1=\alpha-2m+1+4m> 4m\ge 4(k+1)$ we can integrate again to obtain
\begin{eqnarray*}
    |w_{k+1}(r)| &=& \left|\int_{r}^{\infty}t^{-\alpha} \int_{0}^{t} w_{k}(s) s^{\alpha}\, ds \right|\\
&\leq& C\|u\|^{2^{*}-1}_{\nabla^{m}_{\alpha}} \int_{r}^{\infty} t^{-\frac{\alpha+2m-1-4k}{2}}\, dt \\
&=& C r^{- \frac{\alpha+2m+1-4(k+1)}{2}}\|u\|^{2^{*}-1}_{\nabla^{m}_{\alpha}}.
\end{eqnarray*}
So, \eqref{wk-estimate} holds for $k+1$. Next, we will prove \eqref{w'k-estimate}. Firstly, from  \eqref{wk-estimate} each  $w_k$ is well-defined and we can write
\begin{equation}\label{w'k-integral}
w_{k}'(r) = -r^{-\alpha} \int_{0}^{r} w_{k-1}(s) s^{\alpha} \,ds,\;\;\mbox{for}\;\; 1\le k\le m.
\end{equation}
Thus, with help of \eqref{afirmacao 1 est} it follows that 
\begin{eqnarray*}
    |w_{1}'(r)| =  r^{-\alpha} \left| \int_{0}^{r} w_{0}(s) s^{\alpha} \,ds\right|
    \leq C r^{-\alpha} r^{\frac{\alpha-2m+1}{2}}\|u\|^{2^{*}-1}_{\nabla^{m}_{\alpha}} 
    = Cr^{-\frac{\alpha+2m-1}{2}}\|u\|^{2^{*}-1}_{\nabla^{m}_{\alpha}}.
\end{eqnarray*}
This proves \eqref{w'k-estimate} for $k=1$. In addition, by assuming \eqref{w'k-estimate} for $1\leq k < m$,  it follows from \eqref{wk-estimate} that 
\begin{align*}
    |w_{k+1}'(r)| &= r^{-\alpha} \left| \int_{0}^{r} w_{k}(s) s^{\alpha} \,ds\right|\leq Cr^{-\alpha}  r^{\frac{\alpha-2m+1+4k}{2}}\|u\|^{2^{*}-1}_{\nabla^{m}_{\alpha}}\\
    &= C  r^{-\frac{\alpha+2m-1-4k}{2}}\|u\|^{2^{*}-1}_{\nabla^{m}_{\alpha}} = C  r^{-\frac{\alpha+2m+3-4(k+1)}{2}}\|u\|^{2^{*}-1}_{\nabla^{m}_{\alpha}}
\end{align*}
which represents \eqref{w'k-estimate} for $k+1$. Now, we proceed to prove \eqref{wkc2k}. From  \eqref{w'k-integral}, for $1\leq k \le m$ we have 
\begin{equation}\label{w''k-integral}
     w_{k}''(r) =  \alpha r^{-\alpha-1} \int_{0}^{r} w_{k-1}(s) s^{\alpha} \,ds - w_{k-1}(r)=-\frac{\alpha}{r}w^{\prime}_k(r)- w_{k-1}(r).
\end{equation}
From the above integral representation,  $\eqref{wkc2k}$ holds for $k=1$. In addition, if $\eqref{wkc2k}$ holds for all $1\le k<m$ then \eqref{w''k-integral} yields  $w_{k+1}''\in C^{2k}(0,\infty)$ which implies $w_{k+1}\in C^{2k+2}(0,\infty)$. Thus, by induction again we can conclude that \eqref{wkc2k} holds. Finally, from \eqref{w'k-integral} and \eqref{w''k-integral}, we have 
\begin{equation*}
    -\Delta_{\alpha}w_{\ell}= -\big(w_{\ell}''+\frac{\alpha}{r} w_{\ell}'\big)=w_{\ell-1}, \;\; \mbox{for all}\;\; 1\le \ell\le m.
\end{equation*}
By iterative argument, for fixed $k$ with $1\le k\le m$ we can see that \eqref{Dita} holds. 
\end{proof}
\begin{proof}[Proof of Theorem \ref{Existencia de minimizante suave}] Let $u \in \mathcal{D}^{m,2}_{\infty}(\alpha)$ be a weak solution for
\eqref{problema m}. Set $w_0=|u|^{2^*-2}u$ and let $w_k$, $1\le k\le m$ be given by \eqref{wk-iteration}. By combining \eqref{wkc2k} and \eqref{Dita}  the function $w:=w_{m}$ satisfies
\begin{equation}\label{wm-RS}
\begin{aligned}
 w\in C^{2m}(0,\infty)\;\;\mbox{and}\;\; (-\Delta_{\alpha})^{m}w=|u|^{2^*-2}u.
\end{aligned}
\end{equation}
In addition,   \eqref{wk-estimate}, \eqref{w'k-estimate} and \eqref{Dita} imply
\begin{equation}\label{Wm}
\begin{aligned}
& |\Delta_{\alpha}^{j} w| \leq C r^{- \frac{\alpha-2m+1+4j}{2}}\|u\|^{2^{*}-1}_{\nabla^{m}_{\alpha}}\\
&|(\Delta_{\alpha}^{j} w)'| \leq C r^{- \frac{\alpha-2m+3+4j}{2}}\|u\|^{2^{*}-1}_{\nabla^{m}_{\alpha}},
\end{aligned}
\end{equation}
for all $1\le j\le k\le m$. From \eqref{Wm}, we conclude that $w$ satisfies \eqref{0-Ulimites} and \eqref{infinty-Ulimites}. Further, \eqref{wk-iteration} and \eqref{Dita} yield
\begin{equation}\label{wm-IT}
(-\Delta_{\alpha})^{j} w(r)=\int_{r}^{\infty} t^{-\alpha}\left(\int_{0}^{t}s^{\alpha}(-\Delta_{\alpha})^{j+1}w(s)  \,ds\right)dt, \quad r>0.
\end{equation}
From \eqref{wm-RS}, \eqref{Wm} and \eqref{wm-IT}, it remains to show that $w:=w_m\in \mathcal{D}^{m,2}_{\infty}(\alpha)$ and $u=w$. First, we will show that $w:=w_m\in \mathcal{D}^{m,2}_{\infty}(\alpha)$. From \eqref{wm-RS}, we have  $w\in AC^{m-1}_{loc}(0,\infty)$.  By combining  \cite[Lemma 2.9]{JN-JF} with  \eqref{wk-estimate} and \eqref{Wm}  and by using induction argument we get
\begin{equation}\label{esti cond front}
    \lim_{r \to \infty} w^{(j)}(r) = 0, \quad j= 0, 1, \cdots, m-1.
\end{equation}
It follows that $w \in AC^{m-1}_{\mathrm{R}}(0, \infty)$. Now,  to conclude that $w\in\mathcal{D}^{m,2}_{\infty}(\alpha)$ we must prove 
\begin{equation}\label{w-mora}
    \int_{0}^{\infty} |\nabla^{m}_{\alpha} w|^{2} r^{\alpha}\, dr < \infty.
\end{equation}
Let $q_{0} = 2^{*}/(2^{*}-1)$ and, for $k = 1, \cdots, m$, set 
$$q_{k} := \dfrac{q_{k-1}(\alpha+1)}{\alpha-2q_{k-1}+1} = \dfrac{2(\alpha+1)}{\alpha+2m+1-4k}.$$
Then, for $w_k$ such as in \eqref{wk-iteration}, we claim that
   \begin{equation}\label{item-(a)}
    \int_{0}^{\infty} |w_{k}''|^{q_{k-1}} r^{\alpha}\, dr \leq C\int_{0}^{\infty} |w_{k-1}|^{q_{k-1}} r^{\alpha} \, dr
\end{equation}
and
    \begin{equation}\label{item-(b)}
    \int_{0}^{\infty} |w_{k}|^{q_{k}} r^{\alpha}\, dr \leq C\left(\int_{0}^{\infty} |w_{k-1}|^{q_{k-1}} r^{\alpha} \, dr\right)^{\frac{q_{k}}{q_{k-1}}}. 
\end{equation}
Indeed, for each $1\leq k \leq m$,  let $$v_{k}(r) := \int_{0}^{r} w_{k-1}(s) s^{\alpha} \, ds.$$
Since $v_{k}(0) = 0$, we are able to apply \cite[Proposition 2.3 (b)]{JN-JF} with the choices $\theta=\alpha-q_{k-1}(\alpha+1)$, $p=q_{k-1}= 2(\alpha+1)/(\alpha+2m+5-4k)>1$ and $\gamma = \theta + p$, to obtain
\begin{eqnarray*}
  \int_{0}^{\infty} \left| r^{-(\alpha+1)} \int_{0}^{r} w_{k-1}(s) s^{\alpha} \, ds \right|^{q_{k-1}} r^{\alpha}\, dr &=& \int_{0}^{\infty} \left| v_{k}(r) \right|^{q_{k-1}} r^{\alpha-q_{k-1}(\alpha+1)}\, dr \nonumber\\
  &\leq& C \int_{0}^{\infty} \left| v_{k}'(r) \right|^{q_{k-1}} r^{\alpha-q_{k-1}\alpha}\, dr \nonumber\\
  &=& C \int_{0}^{\infty} \left| w_{k-1}(r) \right|^{q_{k-1}} r^{\alpha}\, dr.
\end{eqnarray*}
Hence, 
\begin{eqnarray*}\label{m=2l perak}
\int_{0}^{\infty} |w_{k}''|^{q_{k-1}} r^{\alpha}\, dr &=& \int_{0}^{\infty} \left| - \alpha r^{-(\alpha+1)} \int_{0}^{r} w_{k-1}(s) s^{\alpha} \, ds + w_{k-1}\right|^{q_{k-1}} r^{\alpha}\, dr \nonumber\\
&\leq& C\int_{0}^{\infty} \left| r^{-(\alpha+1)} \int_{0}^{r} w_{k-1}(s) s^{\alpha} \, ds \right|^{q_{k-1}} r^{\alpha}\, dr + C \int_{0}^{\infty} |w_{k-1}|^{q_{k-1}} r^{\alpha} \, dr \nonumber\\
&\leq& C \int_{0}^{\infty} \left| w_{k-1}(r) \right|^{q_{k-1}} r^{\alpha}\, dr
\end{eqnarray*}
which proves \eqref{item-(a)}. To prove \eqref{item-(b)}, we use \cite[Proposition 2.3 (a)]{JN-JF} with the choices $\theta = \gamma = \alpha$ and $p = q_{k-1}>1$, to get 
\begin{equation}\label{M pera}      \int_{0}^{\infty}  |w_{k}(r)|^{q_{k}} r^{\alpha}\, dr  \leq C \left(\int_{0}^{\infty} \left| w_{k}''(r) \right|^{q_{k-1}} r^{\alpha}\, dr\right)^{\frac{q_{k}}{q_{k-1}}}, \quad q_{k} = q_{k-1}^{*} := \dfrac{q_{k-1}(\alpha+1)}{\alpha-2q_{k-1}+1}.
\end{equation}
From \eqref{item-(a)} and \eqref{M pera} we obtain \eqref{item-(b)}.

Now, the proof of \eqref{w-mora} is divided into two parts. If $m=2l$ is an even number, it follows from \eqref{Dita} that $\nabla_{\alpha}^{m}w = \Delta_{\alpha}^{l}w = (-1)^{l} w_{l}$. Further, by using \eqref{item-(b)}
\begin{equation}\label{salto 1}
 \begin{aligned}
    \int_{0}^{\infty} |w_{l}|^{q_{l}} r^{\alpha}\, dr &\leq C\left(\int_{0}^{\infty} |w_{l-1}|^{q_{l-1}} r^{\alpha} \, dr\right)^{\frac{q_{l}}{q_{l-1}}}\\
    &\leq C\left(\int_{0}^{\infty} |w_{l-2}|^{q_{l-2}} r^{\alpha} \, dr\right)^{\frac{q_{l}}{q_{l-2}}}\\
    &\leq  \cdots\\
&\leq C\left(\int_{0}^{\infty} |w_{0}|^{q_{0}} r^{\alpha} \, dr\right)^{\frac{q_{l}}{q_{0}}},
\end{aligned}   
\end{equation}
which yields
\begin{equation*}
    \int_{0}^{\infty} |\nabla_{\alpha}^{m}w|^{2} r^{\alpha}\, dr    = \int_{0}^{\infty} |\Delta_{\alpha}^{l} w|^{2} r^{\alpha}\, dr \leq C\left(\int_{0}^{\infty} |u|^{2^{*}} r^{\alpha} \, dr\right)^{\frac{2}{q_{0}}}< \infty.
\end{equation*}
If $m=2l+1$ is an odd number, \eqref{Dita} yields $\nabla_{\alpha}^{m} w = (\Delta_{\alpha}^{l}w)' = (-1)^{l}w_{m-l}'= (-1)^{l}w'_{l+1}$. As in \eqref{salto 1}, we have
    \begin{equation}\label{M-preto}
    \int_{0}^{\infty} |w_{l}|^{q_{l}} r^{\alpha}\, dr \leq C\left(\int_{0}^{\infty} |u|^{2^*} r^{\alpha} \, dr\right)^{\frac{q_{l}}{q_{0}}}, 
\end{equation}
where $q_{l} = q_{l-1}(\alpha+1)/(\alpha-2q_{l-1}+1)$ 
and $q_{0} = 2^{*}/(2^{*}-1)$. On the other hand, from \cite[Proposition 2.3 (a)]{JN-JF} with $\theta = \gamma = \alpha$ and $p = q_{l}>1$, we can write
\begin{equation}\label{M-verde}      \int_{0}^{\infty}  |w'_{l+1}(r)|^{q^{*}_{l}} r^{\alpha}\, dr  \leq C \left(\int_{0}^{\infty} \left| w_{l+1}''(r) \right|^{q_{l}} r^{\alpha}\, dr\right)^{\frac{q^{*}_{l}}{q_{l}}}, \quad  q_{l}^{*} = \dfrac{q_{l}(\alpha+1)}{\alpha-q_{l}+1}.
\end{equation}
From \eqref{item-(a)} with $k=l+1$ we can see that
\begin{equation}\label{passo-3}
    \int_{0}^{\infty}  |w''_{l+1}(r)|^{q_{l}} r^{\alpha}\, dr  \leq C \int_{0}^{\infty} \left| w_{l}(r) \right|^{q_{l}} r^{\alpha}\, dr.
\end{equation}
Noticing that $q_{l}^{*}=q_{l}(\alpha+1)/(\alpha-q_{l}+1)=2$, the estimates \eqref{M-preto},
 \eqref{M-verde} and \eqref{passo-3} imply 
\begin{equation}\nonumber
 \int_{0}^{\infty} |\nabla_{\alpha}^{m}w|^{2} r^{\alpha}\, dr=\int_{0}^{\infty}  |w'_{l+1}(r)|^{q^{*}_{l}} r^{\alpha}\, dr \leq C\left(\int_{0}^{\infty} |u|^{2^*} r^{\alpha} \, dr\right)^{\frac{q_{l}}{q_{0}}}.
\end{equation}
This completes the proof of $w\in \mathcal{D}^{m,2}_{\infty}(\alpha)$. Next, we will prove the identity $u=w$. First, we prove the following
\begin{equation}\label{m=2l est 2.3000}
     \int_{0}^{\infty} \nabla^{m}_{\alpha} w \nabla^{m}_{\alpha} v r^{\alpha} \,dr = \int_{0}^{\infty} |u|^{2^{*}-2}u v r^{\alpha}\, dr,  \;\;\mbox{for all}\;\; v \in \mathcal{D}^{m,2}_{\infty}(\alpha).
\end{equation}
By density (cf. Lemma~\ref{lemma crucial D}), it is sufficient to show \eqref{m=2l est 2.3000} for $v\in\Upsilon$. 
For each $v\in\Upsilon$, by using \eqref{w'k-estimate} and Fubini's theorem, for  $k=1,\cdots, m$  we obtain

\begin{equation}\label{esti laranja}
\begin{aligned}
     \int_{0}^{\infty} w_{k}' (\Delta_{\alpha}^{k-1}v)'r^{\alpha}\, dr&= -\int_{0}^{\infty} (\Delta_{\alpha}^{k-1}v)'(r) \left(\int_{0}^{r} w_{k-1}(s) s^{\alpha}\, ds\right) \, dr  \\
   &= -\int_{0}^{\infty}  \int_{0}^{r} (\Delta_{\alpha}^{k-1}v)'(r) w_{k-1}(s) s^{\alpha}\, ds \, dr \\
    &= \int_{0}^{\infty} w_{k-1}(s) s^{\alpha} \left(-\int_{s}^{\infty} (\Delta_{\alpha}^{k-1}v)'(r) \, dr\right) \, ds \\
    &=  \int_{0}^{\infty} w_{k-1}(s) \Delta_{\alpha}^{k-1}v(s) s^{\alpha}  \, ds,
\end{aligned}
\end{equation}
where we have used  that $(\Delta_{\alpha}^{k-1}v)(r)$ has compact support in $\mathbb{R}$. In addition, we have 
$$\lim_{r \to \infty } w_{k}(r) r^{\alpha} (\Delta_{\alpha}^{k-1}v)'(r)=0$$
and from \eqref{wk-estimate} we also can conclude that (recall \eqref{SSS-condi})
$$\lim_{r \to 0} w_{k}(r) r^{\alpha} (\Delta_{\alpha}^{k-1}v)'(r)=0.$$
Thus, we can apply integration by parts to get
\begin{equation}\label{wk'-IT}
\begin{aligned}
    \int_{0}^{\infty} w_{k}' (\Delta_{\alpha}^{k-1}v)'r^{\alpha}\, dr&= - \int_{0}^{\infty} w_{k} [(\Delta_{\alpha}^{k-1}v)'r^{\alpha}]'\, dr=  - \int_{0}^{\infty} w_{k} \Delta_{\alpha}^{k}v r^{\alpha}\, dr.
\end{aligned}    
\end{equation}
By combining \eqref{esti laranja} and \eqref{wk'-IT}, it follows that
\begin{equation}\label{esti pera}
    \int_{0}^{\infty} w_{k} \Delta_{\alpha}^{k}v r^{\alpha}\, dr = -\int_{0}^{\infty} w_{k-1}\Delta_{\alpha}^{k-1}v r^{\alpha}\, dr, \quad k=1, \cdots, m.
\end{equation}
By iterating \eqref{wk'-IT} and \eqref{esti pera}, for $k=1, \cdots, m$ we can write
\begin{align}
   & \int_{0}^{\infty} w_{k} \Delta_{\alpha}^{k}v r^{\alpha}\, dr=(-1)^{k}\int_{0}^{\infty} w_{0}v r^{\alpha}\, dr,\label{esti pera-completo}\\
   & \int_{0}^{\infty} w_{k}' (\Delta_{\alpha}^{k-1}v)'r^{\alpha}\, dr=(-1)^{k}\int_{0}^{\infty} w_{0}v r^{\alpha}\, dr \label{esti pera-completoII}.
\end{align}   
Firstly, suppose that $m=2l$ is an even integer number. From \eqref{Dita} with $j=l$ and $k=m$,  $(-\Delta_{\alpha})^{l} w = w_{m-l}= w_{l}$ and so $\Delta_{\alpha}^{l} w =(-1)^{l} w_{l}$. Hence, by  \eqref{esti pera-completo} we have
\begin{equation}\label{EVEN}
   \int_{0}^{\infty} \nabla^{m}_{\alpha} w \nabla_{\alpha}^{m}v r^{\alpha}\, dr  =\int_{0}^{\infty} \Delta_{\alpha}^{l}w \Delta_{\alpha}^{l}v r^{\alpha}\, dr = \int_{0}^{\infty} w_{0} v r^{\alpha}\, dr, \quad \forall v \in \Upsilon. 
\end{equation}
Analogously, if  $m=2l+1$ is an odd integer number. From \eqref{Dita} we also have  $(\Delta_{\alpha}^{l}w)' = (-1)^{l} w'_{l+1}$. By  \eqref{esti pera-completoII} we have
\begin{equation}\label{ODD}
  \int_{0}^{\infty} \nabla^{m}_{\alpha} w \nabla_{\alpha}^{m}v r^{\alpha}\, dr  = \int_{0}^{\infty} (\Delta_{\alpha}^{l}w)' (\Delta_{\alpha}^{l}v)' r^{\alpha}\, dr = \int_{0}^{\infty} w_{0} v r^{\alpha}\, dr, \quad \forall\, v \in \Upsilon.  
\end{equation}
Thus, \eqref{m=2l est 2.3000} follows from \eqref{EVEN} and \eqref{ODD}. By comparing \eqref{solution weak} and \eqref{m=2l est 2.3000}, we get
\begin{equation}\nonumber
    \langle u ,v\rangle = \langle w, v\rangle, \quad \forall v \in \mathcal{D}^{m,2}_{\infty}(\alpha)
\end{equation}
which provides $u=w$.
\end{proof}
As a consequence of the argument used in the proof of Theorem~\ref{Existencia de minimizante suave}, we have the following.
\begin{corollary} \label{UCoroLemmaR} If $u \in \mathcal{D}^{m,2}_{\infty}(\alpha)$ is a weak solution
\eqref{problema m}, then 
\begin{align}
     &u_j(r) \leq C r^{- \frac{\alpha+2m+1-4(m-j)}{2}}\|u\|^{2^{*}-1}_{\nabla^{m}_{\alpha}} \label{Uwm-estimate}\\
     &u^{\prime}_j(r) \leq C r^{- \frac{\alpha+2m+3-4(m-j)}{2}}\|u\|^{2^{*}-1}_{\nabla^{m}_{\alpha}}\label{Uw'm-estimate},
\end{align}
where $u_j=(-\Delta_{\alpha})^{j}u$.
\end{corollary}
\begin{proof}
    In fact, we have shown that $u=w_m$, where $w_m$ is given by \eqref{wk-iteration}. Thus, from \eqref{Dita} $u_j= (-\Delta_{\alpha})^{j}w_m=w_{m-j}$, for $1\le j\le m $. Consequently, by using \eqref{wk-estimate} and \eqref{w'k-estimate}  we can write \eqref{Uwm-estimate} and \eqref{Uw'm-estimate}.
\end{proof}
\begin{remark}\label{u-monotona}
If we assume in Theorem~\ref{Existencia de minimizante suave} that the weak solution \( u \in \mathcal{D}^{m,2}_{\infty}(\alpha) \) is positive, then \( u \) is a decreasing function. Indeed, in this case, \( w_0 = u^{2^{*}-1} > 0 \), and all functions \( w_k \), \( 1 \leq k \leq m \), defined by \eqref{wk-iteration}, are positive and decreasing on \( (0,\infty) \). In particular, \( u = w_m \) is a decreasing function. Furthermore, the boundary conditions \eqref{0-Ulimites} can be improved, as shown in Corollary~\ref{corollaryreferee} below.
\end{remark}

\begin{lemma} \label{lemma-iterationRev} Let $u \in \mathcal{D}^{m,2}_{\infty}(\alpha)$ with $u>0$ in $[0, \infty)$ be a weak solution to $(-\Delta_{\alpha})^{m} u = u^{2^{*}-1}$ and let $w_k$ be given by \eqref{wk-iteration}. Then, for $k=1,\cdots, m$ we have 
\begin{equation}\label{wkCadeia}
    \left\{\begin{aligned}
      & w_k(0)<\infty\\
      &w^{\prime}_k(0)=0\\
      &w^{(2)}_k(0)=-\frac{1}{\alpha+1}w_{k-1}(0).
    \end{aligned}\right.
\end{equation}
Additionally, we have $u^{(2)}(0)<\infty$ and
$u^{(1)}(0) = u^{(3)}(0) = 0.$
\end{lemma}
\begin{proof}
Firstly, we have $w_0(0)=u^{2^{*}-1}(0)<\infty$. From \eqref{wk-iteration}, the  L'Hospital rule yields
\begin{align}\label{w1Prime/r}
 \lim_{r\to 0}\frac{w_1^{\prime}(r)}{r}=-\lim_{r\to 0}\frac{\int_{0}^{r}s^{\alpha}w_0(s)\mathrm{d}s}{r^{\alpha+1}}=-\frac{1}{\alpha+1}\lim_{r\to 0}w_0(r)=-\frac{w_0(0)}{\alpha+1}.
\end{align}
Directly from \eqref{w1Prime/r}, we get 
\begin{equation}\label{w1PrimeNo0}
   w^{\prime}_1(0)=0. 
\end{equation} Thus, \eqref{w1Prime/r} implies
\begin{align}\label{w12Prime}
 w_1^{(2)}(0)=\lim_{r\to 0}\frac{w_1^{\prime}(r)}{r}=-\frac{w_0(0)}{\alpha+1}.
\end{align}
Now, we claim that 
\begin{equation}\label{w1Late}
   w_1(0)= \lim_{r\to 0}w_1(r)<\infty.
\end{equation}
In fact, as noted in Remark~\ref{u-monotona}, we have that $w_1$ is positive and decreasing on $(0,\infty)$. Thus,  $\lim_{r\to 0}w_1(r)=w_1(0)\in (0, \infty]$. We will show that $w_1(0)<\infty$.  By contradiction,  suppose that there exists a  sequence  $(r_n)\subset (0,\infty)$ with $\lim_{n\to\infty}r_n=0$ and  
\begin{equation}\label{contra-limite}
    \lim_{n\to\infty} w_1(r_n)= \infty.
\end{equation}
From \eqref{w1PrimeNo0}, there is $\delta>0$ such that $|w_1^{\prime}(s)|\le 1$ for $s\in (0,\delta]$. Then
\begin{equation}\label{TFC}
    |w_1(r_n)-w_1(r_m)|=\Big|\int_{r_n}^{r_m}w^{\prime}_{1}(s)\mathrm{d}s\Big|\le |r_m-r_n|
\end{equation}
for $m, n$ large enough for $r_n,r_m\in (0, \delta]$. This contradicts \eqref{contra-limite} and conclude \eqref{w1Late}. From \eqref{w1PrimeNo0}, \eqref{w12Prime} and \eqref{w1Late}, we have that \eqref{wkCadeia} holds for $k=1$. By induction, suppose that \eqref{wkCadeia} holds for some $k\le m-1$. Then \eqref{wk-iteration} yields 
\begin{align}\label{wk+1Prime/r}
 \lim_{r\to 0}\frac{w_{k+1}^{\prime}(r)}{r}=-\lim_{r\to 0}\frac{\int_{0}^{r}s^{\alpha}w_k(s)\mathrm{d}s}{r^{\alpha+1}}=-\frac{1}{\alpha+1}\lim_{r\to 0}w_k(r)=-\frac{w_k(0)}{\alpha+1}.
\end{align}
From \eqref{wk+1Prime/r}, we get 
\begin{equation}\label{wk+1PrimeNo0}
   w^{\prime}_{k+1}(0)=0 
\end{equation} 
and
\begin{align}\label{wk+12Prime}
 w_{k+1}^{(2)}(0)=\lim_{r\to 0}\frac{w_{k+1}^{\prime}(r)}{r}=-\frac{w_{k}(0)}{\alpha+1}.
\end{align}
In addition, since $w_{k+1}$ is positive and decreasing on $(0,\infty)$, arguing as in  \eqref{w1Late}, \eqref{contra-limite} and \eqref{TFC}, we have that \eqref{wk+1PrimeNo0} implies
\begin{equation}\label{wk+1Late}
   w_{k+1}(0)= \lim_{r\to 0}w_{k+1}(r)<\infty
\end{equation}
which completes the induction argument. Finally, from $u = w_{m}$ and by using \eqref{wkCadeia}, we get $$u^{(1)}(0) = w^{(1)}_m(0)=0\;\;\mbox{and}\;\; u^{(2)}(0) = -\dfrac{1}{\alpha+1} w_{m-1}(0).$$ Now, from \eqref{wk-iteration}
\begin{equation}\label{new-D3}
\begin{aligned}
     w_{k}^{(3)}(r) &=  -\alpha (\alpha+1) r^{-\alpha-2} \int_{0}^{r} w_{k-1}(s) s^{\alpha} \,ds + \alpha r^{-1}  w_{k-1}(r) - w'_{k-1}(r)\\
     &= -\alpha (\alpha+1) r^{-\alpha-2} \int_{0}^{r} \left(w_{k-1}(s) -  w_{k-1}(r)\right) s^{\alpha} \,ds - w'_{k-1}(r).
\end{aligned}
\end{equation}
Using \eqref{new-D3} we obtain
\begin{equation*}
    \begin{aligned}
        u^{(3)}(r)=w_{m}^{(3)}(r) &= -\alpha (\alpha+1) r^{-\alpha-2} \int_{0}^{r} \left(w_{m-1}(s) -  w_{m-1}(r)\right) s^{\alpha} \,ds - w'_{m-1}(r).
    \end{aligned}
\end{equation*}
Given $\epsilon>0$, by $w'_{m-1}(0) = 0$ implies that there exists $\delta > 0$ such that 
$$|w_{m-1}'(r)| \leq \epsilon \quad \text{and} \quad |w_{m-1}(r) - w_{m-1}(s)| \leq \epsilon (r-s) \quad \text{for}\quad 0 < s \leq r < \delta.$$ Then, for $r \in (0, \delta)$ we have
\begin{equation*}
    \begin{aligned}
        |u^{(3)}(r)| &\leq  \epsilon \alpha (\alpha+1) r^{-\alpha-2} \int_{0}^{r} r s^{\alpha} \,ds + \epsilon = (\alpha+1) \epsilon.
    \end{aligned}
\end{equation*}
This implies that $u^{(3)}(0) = 0$.
\end{proof}
As a consequence of Lemma~\ref{lemma-iterationRev}, we have the following: 
\begin{corollary}\label{corollaryreferee} Let $u \in \mathcal{D}^{m,2}_{\infty}(\alpha)$ with $u>0$ in $[0, \infty)$ be a weak solution to $(-\Delta_{\alpha})^{m} u = u^{2^{*}-1}$ and let $u_j=(-\Delta_\alpha)^{j}u$, for $j=0,1,\cdots, m$. Then 
\begin{equation}\label{controle-singularI}
\left\{\begin{aligned}
 &u_j(0)<\infty, &\mbox{for} &\;\; j=0, 1,\cdots, m\\
 &u^{\prime}_{j}(0)=0,&\mbox{for}&\;\; j=1,\cdots, m-1\\
 &u^{(2)}_{j}(0)=-\frac{1}{\alpha+1}u_{j+1}(0),&\mbox{for}&\;\; j=1,\cdots, m-1.\\
\end{aligned}\right.
\end{equation}
\end{corollary}
\begin{proof}
By assumption, we have  $u_0(0)=u(0)<\infty$ and $u_m(0)=u^{2^*-1}(0)$. 
As noted in   Remark~\ref{u-monotona}, we have  that \( u = w_m \), where $w_m$ is given in \eqref{wk-iteration}. From \eqref{Dita}, it follows that 
\begin{equation}\label{deuparaWW}
  u_j=w_{m-j},\;\;\;   u^{\prime}_j=w^{\prime}_{m-j},\;\; \mbox{and}\;\; u^{(2)}_j=w^{(2)}_{m-j},\;\; \mbox{for}\;\; j=1, \cdots, m.
\end{equation}
Now, \eqref{controle-singularI} follows from \eqref{wkCadeia} and \eqref{deuparaWW}. 
\end{proof}
To determine the sign of a minimizers, we establish an estimate for solutions that change sign. The following lemma was inspired by \cite[Lemma 7.22]{Gazzola-Grunau-Sweers}. 
\begin{lemma}\label{sinal de minimizante}
    Assume that there exists a nodal solution 
 $u \in \mathcal{D}^{m,2}_{\infty}(\alpha)$ of the equation \eqref{problema m}. Then,
    \begin{equation}\label{lemma 2.9 - estimativa}
        \|u\|^{2}_{\nabla^{m}_{\alpha}} \geq 2^{\frac{2m}{\alpha+1}} \mathcal{S} \|u\|^{2}_{L^{2^{*}}_{\alpha}},
    \end{equation}
    where $\mathcal{S}$ is given by \eqref{c0-e40}.
\end{lemma}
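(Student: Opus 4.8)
The plan is to reduce \eqref{lemma 2.9 - estimativa} to the two separate lower bounds
$$\int_0^{\infty} (u^+)^{2^{*}} r^{\alpha}\,dr \ \ge\ \mathcal{S}^{\frac{\alpha+1}{2m}}\qquad\text{and}\qquad \int_0^{\infty} (u^-)^{2^{*}} r^{\alpha}\,dr \ \ge\ \mathcal{S}^{\frac{\alpha+1}{2m}},$$
where $u^{\pm}=\max\{\pm u,0\}$. Indeed, testing \eqref{solution weak} with $v=u$ gives $\|u\|^{2}_{\nabla^{m}_{\alpha}}=\int_0^{\infty}|u|^{2^{*}}r^{\alpha}\,dr=\|u\|^{2^{*}}_{L^{2^{*}}_{\alpha}}$; since $u$ is nodal both integrals above are positive, and adding them yields $\|u\|^{2^{*}}_{L^{2^{*}}_{\alpha}}\ge 2\,\mathcal{S}^{(\alpha+1)/(2m)}$. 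Using once more $\|u\|^{2}_{\nabla^{m}_{\alpha}}=\|u\|^{2^{*}}_{L^{2^{*}}_{\alpha}}$ together with the elementary identity $(2^{*}-2)/2^{*}=2m/(\alpha+1)$, one gets
$$\frac{\|u\|^{2}_{\nabla^{m}_{\alpha}}}{\|u\|^{2}_{L^{2^{*}}_{\alpha}}}=\big(\|u\|^{2^{*}}_{L^{2^{*}}_{\alpha}}\big)^{\frac{2m}{\alpha+1}}\ \ge\ \big(2\,\mathcal{S}^{\frac{\alpha+1}{2m}}\big)^{\frac{2m}{\alpha+1}}=2^{\frac{2m}{\alpha+1}}\mathcal{S},$$
which is exactly \eqref{lemma 2.9 - estimativa}. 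Thus everything comes down to one of these half-bounds, say for $u^{+}$.

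To get at $u^{+}$ I would not work with $u^{+}$ itself (which need not belong to $\mathcal{D}^{m,2}_{\infty}(\alpha)$ when $m\ge 2$) but with its iterated lift. Writing $f:=|u|^{2^{*}-2}u=f^{+}-f^{-}$ with $f^{+}=(u^{+})^{2^{*}-1}\ge 0$ and $f^{-}=(u^{-})^{2^{*}-1}\ge 0$, let $v_{1}$ and $v_{2}$ be the functions obtained by applying the $m$-fold integral operator in \eqref{wk-iteration} to $f^{+}$ and to $f^{-}$, respectively, in place of $w_{0}$; this is legitimate because $0\le f^{\pm}\le|u|^{2^{*}-1}$, so Lemma~\ref{lemma-iteration} applies verbatim with $f^{\pm}$ instead of $w_{0}$. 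All kernels in \eqref{wk-iteration} being positive, $v_{1},v_{2}\ge 0$; by linearity of the operator, $v_{1}-v_{2}=w_{m}=u$ (Theorem~\ref{Existencia de minimizante suave}); hence, pointwise on $(0,\infty)$, $v_{1}=u+v_{2}\ge u^{+}$ and likewise $v_{2}\ge u^{-}$. The step I expect to be the main obstacle is verifying that $v_{1},v_{2}\in\mathcal{D}^{m,2}_{\infty}(\alpha)$ and that they satisfy the weak identities
$$\int_0^{\infty}\nabla^{m}_{\alpha}v_{1}\,\nabla^{m}_{\alpha}\varphi\,r^{\alpha}\,dr=\int_0^{\infty}f^{+}\varphi\,r^{\alpha}\,dr,\qquad \int_0^{\infty}\nabla^{m}_{\alpha}v_{2}\,\nabla^{m}_{\alpha}\varphi\,r^{\alpha}\,dr=\int_0^{\infty}f^{-}\varphi\,r^{\alpha}\,dr$$
for all $\varphi\in\mathcal{D}^{m,2}_{\infty}(\alpha)$; this, however, only requires repeating the argument in the proof of Theorem~\ref{Existencia de minimizante suave}, since the sole property of $w_{0}$ used there was $\int_0^{\infty}|w_{0}|^{2^{*}/(2^{*}-1)}r^{\alpha}\,dr=\int_0^{\infty}|u|^{2^{*}}r^{\alpha}\,dr<\infty$, and $\int_0^{\infty}(f^{\pm})^{2^{*}/(2^{*}-1)}r^{\alpha}\,dr\le\int_0^{\infty}|u|^{2^{*}}r^{\alpha}\,dr<\infty$ as well.

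Finally, set $\beta:=\int_0^{\infty}(u^{+})^{2^{*}}r^{\alpha}\,dr>0$. Testing the identity for $v_{1}$ with $\varphi=v_{1}$ and using $v_{1}\ge u^{+}\ge 0$ together with $f^{+}=(u^{+})^{2^{*}-1}\ge 0$ gives the lower bound
$$\|v_{1}\|^{2}_{\nabla^{m}_{\alpha}}=\int_0^{\infty}(u^{+})^{2^{*}-1}v_{1}\,r^{\alpha}\,dr\ \ge\ \int_0^{\infty}(u^{+})^{2^{*}}r^{\alpha}\,dr=\beta,$$
while Hölder's inequality with exponents $2^{*}/(2^{*}-1)$ and $2^{*}$ and the definition \eqref{c0-e40} of $\mathcal{S}$ give
$$\|v_{1}\|^{2}_{\nabla^{m}_{\alpha}}=\int_0^{\infty}(u^{+})^{2^{*}-1}v_{1}\,r^{\alpha}\,dr\ \le\ \beta^{\frac{2^{*}-1}{2^{*}}}\|v_{1}\|_{L^{2^{*}}_{\alpha}}\ \le\ \beta^{\frac{2^{*}-1}{2^{*}}}\mathcal{S}^{-\frac12}\|v_{1}\|_{\nabla^{m}_{\alpha}},$$
hence $\|v_{1}\|^{2}_{\nabla^{m}_{\alpha}}\le\mathcal{S}^{-1}\beta^{2(2^{*}-1)/2^{*}}$. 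Combining the two estimates yields $\beta\le\mathcal{S}^{-1}\beta^{2-2/2^{*}}$, i.e. $\beta^{1-2/2^{*}}\ge\mathcal{S}$, that is $\beta\ge\mathcal{S}^{(\alpha+1)/(2m)}$ after recalling $1-2/2^{*}=2m/(\alpha+1)$. Running the same argument with $v_{2}$ and $u^{-}$ gives the companion bound $\int_0^{\infty}(u^{-})^{2^{*}}r^{\alpha}\,dr\ge\mathcal{S}^{(\alpha+1)/(2m)}$, and the reduction in the first paragraph then completes the proof.
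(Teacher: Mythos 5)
Your argument is correct, and it reaches the inequality by a genuinely different route from the paper. The paper applies Moreau's decomposition of the Hilbert space $\mathcal{D}^{m,2}_{\infty}(\alpha)$ into the cone $\mathcal{H}$ of nonnegative functions and its dual cone $\mathcal{H}^{*}$: it writes $u=u_{1}+u_{2}$ with $u_{1}\in\mathcal{H}$, $u_{2}\in\mathcal{H}^{*}$, $\langle u_{1},u_{2}\rangle=0$, proves $\mathcal{H}^{*}\subseteq-\mathcal{H}$ via the positivity-preserving property of $((-\Delta_{\alpha})^{m})^{-1}$ (established by integrating the system \eqref{PV-lema2.8-2} down from $y_{m-1}$), and then uses the pointwise bound $|u|^{2^{*}-2}uu_{i}\le|u_{i}|^{2^{*}}$ to derive $\|u_{i}\|^{2}_{L^{2^{*}}_{\alpha}}\ge\mathcal{S}^{(\alpha-2m+1)/(2m)}$ for each piece, which is equivalent to your half-bound $\int(u^{\pm})^{2^{*}}r^{\alpha}\,dr\ge\mathcal{S}^{(\alpha+1)/(2m)}$; the orthogonality $\langle u_{1},u_{2}\rangle=0$ then lets it sum the energies. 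You instead decompose the \emph{nonlinearity} $f=f^{+}-f^{-}$, lift each part by the $m$-fold integral operator of \eqref{wk-iteration}, and exploit the positivity of the kernels to get $v_{1}\ge u^{+}$, $v_{2}\ge u^{-}$ directly; the same positivity-preserving fact underlies both proofs, but you use it on the source terms rather than to characterize the dual cone. Your version avoids invoking Moreau's theorem altogether, at the cost of having to re-run the regularity/integrability machinery of Theorem~\ref{Existencia de minimizante suave} with $f^{\pm}$ in place of $w_{0}$ — which, as you correctly observe, goes through verbatim because the only input is the H\"older bound $\|f^{\pm}\|_{L^{2^{*}/(2^{*}-1)}_{\alpha}}\le\|u\|^{2^{*}-1}_{L^{2^{*}}_{\alpha}}$. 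The final bookkeeping ($\|u\|^{2}_{\nabla^{m}_{\alpha}}=\|u\|^{2^{*}}_{L^{2^{*}}_{\alpha}}$, $|u|^{2^{*}}=(u^{+})^{2^{*}}+(u^{-})^{2^{*}}$, and $1-2/2^{*}=2m/(\alpha+1)$) is accurate and yields exactly the constant $2^{2m/(\alpha+1)}$.
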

\begin{proof}
Let us consider the closed convex nonempty cone 
$$\mathcal{H} = \{u \in \mathcal{D}^{m,2}_{\infty}(\alpha) \,:\; u \geq  0 \quad \text{a.e. in}\,\, (0, \infty) \}$$
and its dual cone by
$$\mathcal{H^{*}} = \{u \in \mathcal{D}^{m,2}_{\infty}(\alpha) \,:\; \langle u, v\rangle \leq 0, \;\; \forall\; v \in \mathcal{H}\}.$$
We will claim that 
\begin{equation}\label{condicao cone}
    \mathcal{H^{*}} \subseteq -\mathcal{H}.
\end{equation}
Indeed, let $\varphi \in \mathcal{H}\cap \Upsilon$ with $\Upsilon = \{ u_{|_{[0, \infty)}} \,: \; u \in C^{\infty}_{c}(\mathbb{R})\}$. Consider the  problem
\begin{equation}\label{PV-lema2.8}
(- \Delta_{\alpha})^{m} y = \varphi\;\; \mbox{in}\;\; (0, \infty).
\end{equation}  
By the Riesz Representation Theorem, there exists a unique weak solution $y_{\varphi}\in\mathcal{D}^{m,2}_{\infty}(\alpha)$ to \eqref{PV-lema2.8}. The same argument in the proof of Theorem \ref{Existencia de minimizante suave} allows us to conclude that $y_{\varphi}\in C^{2m}(0,\infty)$ and satisfies \eqref{PV-lema2.8} in the usual sense and the boundary conditions
\begin{equation}\label{y0-Ulimites}
\lim_{r\to 0}r^{\alpha}\Delta_{\alpha}^{j} y_{\varphi} (r) = \lim_{r\to 0}r^{\alpha}((\Delta_{\alpha})^{j}y_{\varphi})^{\prime} (r) = 0,\;\; \mbox{for}\;\; j= 0, 1, \cdots, m-1
\end{equation}
and
\begin{equation}\label{yinfinite-Ulimites}
\lim_{r\to \infty}\Delta_{\alpha}^{j} y_{\varphi} (r) = \lim_{r\to \infty}((\Delta_{\alpha})^{j}y_{\varphi})^{\prime} (r) = 0,\;\; \mbox{for}\;\; j= 0, 1, \cdots, m.
\end{equation}
In order to prove that $y_{\varphi}$ preserves sign of $\varphi$, we rewrite \eqref{PV-lema2.8} as the system
\begin{equation}\label{PV-lema2.8-2}
 \left\{\begin{aligned}
& - \Delta_{\alpha} y_{k} = y_{k+1},\;\; &\mbox{for}&\;\; k=0, 1, \dots, m-2,\\
& - \Delta_{\alpha} y_{m-1} = \varphi,\;\; &\mbox{in}&\;\; (0, \infty),
\end{aligned}\right.
\end{equation}
where $y_{k} = (- \Delta_{\alpha})^{k} y$ with $y\in C^{2m}(0,\infty)\cap \mathcal{D}^{m,2}_{\infty}(\alpha)$ satisfying \eqref{y0-Ulimites} and \eqref{yinfinite-Ulimites}. From \eqref{PV-lema2.8-2} and \eqref{y0-Ulimites} we can deduce that $$y'_{m-1}(r) = - \frac{1}{r^{\alpha}}\int_{0}^{r} \varphi(s) s^{\alpha}\, ds, \;\; \text{for} \;\; r>0.$$
Since  $\varphi \geq 0$ in $(0, \infty)$ we get that $y_{m-1}$ is a nonincreasing function on $(0,\infty)$. From \eqref{yinfinite-Ulimites}, we have 
$$\lim_{r\to \infty}y_{m-1} (r)=0$$  
which implies $y_{m-1}\geq 0$ in $(0,\infty)$. Now, for each $k=0,1,2,\dots, m-2$,  from \eqref{PV-lema2.8-2} and \eqref{y0-Ulimites}, we obtain 
\begin{equation}\label{ykI}
   y'_{k}(r) = - \frac{1}{r^{\alpha}}\int_{0}^{r} y_{k+1}(s) s^{\alpha}\, ds \quad \text{in} \,\, (0, \infty). 
\end{equation}
Hence, $y_{m-1}\ge 0$ and \eqref{ykI} yield $y_{m-2}$ nonincreasing on $(0, \infty)$ and \eqref{yinfinite-Ulimites} implies $y_{m-2}\geq 0$ in $(0,\infty)$. By repeating the  argument,
we get that the functions $y_{m-3}, \dots,y_2, y_{1}, y_0$ are nonnegative on $(0,\infty)$. Hence, since $y_0=y_{\varphi}$ and  $y_{k+1}=-\Delta_{\alpha}y_k$, $k=0,1, \dots, m-2$  solves the system \eqref{PV-lema2.8-2}, we ensure $y_{\varphi}\ge0$ or $y_{\varphi} \in \mathcal{H}$. Now, we are in a position to prove \eqref{condicao cone}. Indeed,  for any $w\in \mathcal{H}^{*}$,  we get $\langle w, y_{\varphi}\rangle\le 0$. By using that $y_{\varphi}$ is weak solution of  \eqref{PV-lema2.8}, we obtain 
\begin{eqnarray*}
    \int_{0}^{\infty} w \varphi r^{\alpha}\, dr  = \langle w, y_{\varphi}\rangle\le 0,\;\; \mbox{for all}\;\; \varphi \in  \mathcal{H}\cap \Upsilon. 
\end{eqnarray*}
It follows that $w\le 0$ in $(0,\infty)$ or $w \in -\mathcal{H}$ as claimed in \eqref{condicao cone}. Let $u$ be a nodal weak solution of \eqref{problema m}, by the decomposition of a Hilbert space into dual cones due to Moreau \cite{Moreau} (see also \cite[Theorem 3.4]{Gazzola-Grunau-Sweers}) there exists a unique $(u_{1}, u_2)\in \mathcal{H}\times\mathcal{H^{*}}$ satisfying
\begin{equation}\label{decomposição cones}
    u = u_{1}+u_{2} \quad\text{and}\quad \langle u_{1}, u_{2} \rangle = 0.
\end{equation}
We claim that for $i=1,2$ we have
\begin{equation}\label{lema2.9 - est1}
u_{i} \not\equiv 0 \quad \text{and}\quad    |u|^{2^{*}-2} u u_{i} \leq |u_{i}|^{2^{*}} \;\; \text{a.e. in} \,\, (0, \infty).
\end{equation}
Since $u$ changes sign, \eqref{condicao cone} ensures $u_{i} \not\equiv 0$, for $i=1,2$. To prove the second part of \eqref{lema2.9 - est1}, we first consider $i=1$  and analyze two cases. If $r\in (0,\infty)$ is such that $u(r) \leq 0$, then $\eqref{lema2.9 - est1}$ is trivial. On the other hand, since \eqref{condicao cone} yields $u_2\le 0$ , if $u(r) \geq 0$, we get $u(r) = u_{1}(r)+ u_{2}(r) \leq u_{1}(r)$ and \eqref{lema2.9 - est1} holds again. The case  $i=2$ is similar. By using \eqref{c0-e40}, \eqref{decomposição cones} and \eqref{lema2.9 - est1}, we have for $i=1,2$
\begin{eqnarray*}
    \mathcal{S}\|u_{i}\|^{2}_{L^{2^{*}}_{\alpha}} &\leq& \|u_{i}\|^{2}_{\nabla^{m}_{\alpha}} = \langle u, u_{i}\rangle_{\nabla^{m}_{\alpha}} 
    = \int_{0}^{\infty} |u|^{2^{*}-2} u u_{i} r^{\alpha}\, dr \\
    &\leq& \int_{0}^{\infty} |u_{i}|^{2^{*}} r^{\alpha}\, dr = \|u_{i}\|^{2^{*}}_{L_{\alpha}^{2^{*}}},
\end{eqnarray*}
which implies
\begin{equation}\label{lema2.9-3}
    \mathcal{S}^{\frac{\alpha-2m+1}{2m}} \leq \|u_{i}\|_{L_{\alpha}^{2^{*}}}^{2}, \;\; \text{for}\,\, i = 1, 2.
\end{equation}
From \eqref{solution weak} with $v=u$,  we deduce 
\begin{equation}\label{lema2.9 -4}
    \|u\|_{\nabla^{m}_{\alpha}}^{2} = \|u\|^{2^{*}}_{L^{2^{*}}_{\alpha}}.
\end{equation}
By combining \eqref{c0-e40}, \eqref{lema2.9 -4}, \eqref{decomposição cones} and \eqref{lema2.9-3}, we get
\begin{eqnarray*}
    \dfrac{\|u\|_{\nabla^{m}_{\alpha}}^{2}}{\|u\|^{2}_{L^{2^{*}}_{\alpha}}} &=& (\|u\|^{2}_{\nabla^{m}_{\alpha}})^{\frac{2m}{\alpha+1}} = \left(\|u_{1}\|^{2}_{\nabla^{m}_{\alpha}} + \|u_{2}\|^{2}_{\nabla^{m}_{\alpha}} \right)^{\frac{2m}{\alpha+1}}\\
    &\geq&  \left(\mathcal{S}\|u_{1}\|^{2}_{L^{2^{*}}_{\alpha}} + \mathcal{S}\|u_{2}\|^{2}_{L^{2}_{\alpha}} \right)^{\frac{2m}{\alpha+1}} \geq \mathcal{S} 2^{\frac{2m}{\alpha+1}},
\end{eqnarray*}
which yields \eqref{lemma 2.9 - estimativa}.
\end{proof}

\section{Proof of Theorem \ref{Teorema unicidade}}
\label{sec3}
The aim of this section is to prove Theorem~\ref{Teorema unicidade}. Our proof is divided into two main steps: 

\paragraph{Step~1:} For each $\epsilon>0$, the function $w_{\epsilon}$ in \eqref{c0-e13 +} solves equation \eqref{problema m}.

\paragraph{Step~2:} If $v\in \mathcal{D}^{m,2}_{\infty}(\alpha)\cap C^{2m}(0,\infty)$ is a positive $(2m-1)$-th nonsingular at $r=0$  solution to \eqref{problema m} and such that $v^{(i)}(0) = 0$, for  $i = 1, 3, \ldots, 2m-1$, then  $v=w_{\epsilon}$ for some $\epsilon>0$.

\subsection{Proof of Step $1$}\label{subsec122} Let us introduce the following suitable notation: For $x,y$ and $z$ real numbers, we set
\begin{eqnarray*}
 A(x, y, z) &=& x(x + y -1 ) +  z (z - 2x +1 - y),\\
 B(x, y, z) &=& 2x(x + y - 1) - z (2x + y +1),\\
 C(x, y) &=& x (x + y - 1).
\end{eqnarray*}
Now, let us consider the auxiliary function $v:(0, \infty) \to \mathbb{R}$ given by
\begin{equation}\label{c0-e9}
    v(r) = r^{\rho} (1+r^2)^{- \frac{\sigma}{2}}, 
\end{equation}
where $\rho\ge 0$ and $\sigma>0$ will be chosen later.  With this notation we have the following.
\begin{lemma}\label{c0-lema1} For each $\alpha>-1$ we have
$$\Delta_{\alpha} v  = (1+r^{2})^{- \frac{\sigma+4}{2}} \left[r^{\rho+2}A(\rho, \alpha, \sigma) + r^{\rho} B(\rho, \alpha, \sigma) + r^{\rho-2} C(\rho, \alpha)\right].$$
\end{lemma}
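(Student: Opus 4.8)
The proof is a direct, if slightly lengthy, computation, so the plan is to differentiate $v$ twice, form $\Delta_\alpha v = v'' + \frac{\alpha}{r}v'$ according to \eqref{laplace-geral}, pull out the common factor $(1+r^2)^{-(\sigma+4)/2}$ at the outset, and then collect the resulting polynomial in $r$ into the three powers $r^{\rho-2}$, $r^{\rho}$, $r^{\rho+2}$. The point is that after clearing the common power of $(1+r^2)$, the bracket becomes a genuine polynomial in $r$ whose only nonzero terms are those three, and their coefficients are precisely $C(\rho,\alpha)$, $B(\rho,\alpha,\sigma)$, $A(\rho,\alpha,\sigma)$.

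Concretely, first I would compute
$$v'(r) = (1+r^2)^{-\frac{\sigma}{2}-1}\big[\rho\, r^{\rho-1} + (\rho-\sigma)\,r^{\rho+1}\big],$$
so that $\frac{\alpha}{r}v'(r) = (1+r^2)^{-\frac{\sigma}{2}-1}\big[\alpha\rho\, r^{\rho-2} + \alpha(\rho-\sigma)\,r^{\rho}\big]$. Differentiating $v'$ once more and factoring out $(1+r^2)^{-\frac{\sigma}{2}-2}=(1+r^2)^{-\frac{\sigma+4}{2}}$ gives
$$v''(r) = (1+r^2)^{-\frac{\sigma+4}{2}}\Big[\rho(\rho-1)r^{\rho-2}(1+r^2)^2 - \big(\rho\sigma+\sigma(\rho+1)\big)r^{\rho}(1+r^2) + \sigma(\sigma+2)r^{\rho+2}\Big].$$
Rewriting $\frac{\alpha}{r}v'$ with the same prefactor (i.e.\ multiplying through by one extra factor of $1+r^2$) and adding, one obtains $\Delta_\alpha v = (1+r^2)^{-\frac{\sigma+4}{2}}\,P(r)$, where $P$ is obtained by expanding $(1+r^2)$ and $(1+r^2)^2$ and regrouping.

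The final step is just to read off the coefficients of $P$. Collecting $r^{\rho-2}$ gives $\rho(\rho-1)+\alpha\rho = \rho(\rho+\alpha-1) = C(\rho,\alpha)$; collecting $r^{\rho}$ gives $2\rho(\rho+\alpha-1) - \sigma(2\rho+\alpha+1) = B(\rho,\alpha,\sigma)$; and collecting $r^{\rho+2}$ gives $\rho(\rho+\alpha-1) + \sigma(\sigma-2\rho+1-\alpha) = A(\rho,\alpha,\sigma)$, which is exactly the asserted identity. I do not expect any genuine obstacle here; the only thing to be careful about is the bookkeeping of the mixed powers of $r$ and of $(1+r^2)$, which is precisely why normalizing everything against the single common factor $(1+r^2)^{-(\sigma+4)/2}$ before expanding is the convenient way to organize the algebra.
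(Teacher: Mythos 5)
Your proof is correct and follows essentially the same route as the paper: differentiate twice, factor out $(1+r^2)^{-\frac{\sigma+4}{2}}$, and read off the coefficients of $r^{\rho-2}$, $r^{\rho}$, $r^{\rho+2}$, which indeed come out to $C(\rho,\alpha)$, $B(\rho,\alpha,\sigma)$, $A(\rho,\alpha,\sigma)$. All of your intermediate coefficients check out (in fact your expression for $v''$ carries the correct sign $+\sigma(\sigma-2\rho+1)$ on the $r^{\rho+2}$ term, where the paper's displayed $v''$ has a harmless sign typo that disappears in its final formula).
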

\begin{proof}
By a simple calculation, we get
\begin{equation*}
    v'(r) = (1+r^2)^{- \frac{\sigma+4}{2}} \left[r^{\rho-1} \rho + r^{\rho+1}(2\rho - \sigma) + r^{\rho+3}(\rho- \sigma)\right]
\end{equation*}
and
\begin{eqnarray*}
    v''(r) &=&  (1+r^2)^{- \frac{\sigma+4}{2}} \big\{r^{\rho-2} \rho(\rho-1) + r^{\rho}[2\rho(\rho-1) - (2\rho +1)\sigma] \\
    &+& r^{\rho+2}[\rho(\rho - 1)- \sigma (\sigma - 2\rho +1)] \big\}.
\end{eqnarray*}
From \eqref{laplace-geral}, we see
\begin{eqnarray*}
\Delta_{\alpha} v &=& (1+r^2)^{- \frac{\sigma+4}{2}} \big\{r^{\rho-2} [\rho(\rho-1) + \alpha \rho] + r^{\rho}[2\rho(\rho + \alpha -1) - \sigma (2\rho + \alpha +1)  ] \\
& & + r^{\rho+2}[\rho(\rho + \alpha - 1) + \sigma (\sigma - 2\rho + 1 - \alpha)] \big\} \\
&=&  (1+r^2)^{- \frac{\sigma+4}{2}} \big[r^{\rho+2} A(\rho, \alpha,\sigma)+ r^{\rho} B(\rho, \alpha,\sigma)+ r^{\rho-2} C(\rho, \alpha) \big].
\end{eqnarray*}
\end{proof}
\noindent To state our next result, we need to introduce some notations.  For $i\in\mathbb{Z}$, $m \in \mathbb{N}$ and $\alpha>-1$ satisfying $\alpha-2m+1>0$ and $j=1, \cdots, m$, we set
\begin{align*}
 D(i,j)&=\left\{\begin{aligned}
& 0 \;\;& \mbox{if}& \;\;i<0\;\;\mbox{or}\;\; i\ge j+1\\
& 1 \;\; &\mbox{if}&\;\; i=0\\
&\prod_{h= j-i +1}^{j} (m-h) \;\; &\mbox{if} &\;\; 1\leq i \leq j,
\end{aligned}\right.\\
 E(i,j)& =\left\{\begin{array}{llc}
\displaystyle\prod_{h= i}^{j-1} (\alpha +1+ 2h) &\mbox{if}\quad 0\leq i \leq j-1\\
1&\mbox{if}\quad i=j\\
0 &\mbox{if}\quad i \geq j+1\;\;\mbox{or}\;\; i<0
\end{array}\right. \\
  K_j &=  \displaystyle\prod_{h= 0}^{j-1} (\alpha -2m +1 + 2h),\\
 G(i, j) & = 2^{i} {j\choose i} K_{j} D(i,j) E(i,j),
\end{align*}
 with
\begin{equation*}
  \displaystyle{j\choose i}=\left\{
  \begin{aligned}
       &\frac{j!}{i!(j-i)!} \;\;&\mbox{if}& \;\; 0 \leq i \leq j\\
      &0 \;\; &\mbox{if}&\;\; i>j\;\;\mbox{or}\;\; i<0
  \end{aligned}\right.
 \end{equation*}
 where $\ell!$ denotes a factorial.
\begin{proposition}\label{c0-prop2}
Let $u : (0, \infty) \to \mathbb{R}$ defined by
\begin{equation}\label{c0-e10}
    u(r) = (1 + r^2)^{- \frac{\alpha - 2m +1}{2}}, \quad \text{with}\quad \alpha - 2m + 1 > 0.
\end{equation}
Then, for $j=1, \cdots , m$ holds
\begin{equation}\label{c0-e11}
    (- \Delta_{\alpha})^{j} u = (1+ r^2)^{- \frac{\alpha - 2m + 1 + 4j}{2}} \sum_{i=0}^{j} G(i, j) r^{2i}.
\end{equation}
\end{proposition}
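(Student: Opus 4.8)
The plan is to prove \eqref{c0-e11} by induction on $j$, using only the linearity of $\Delta_\alpha$ and Lemma~\ref{c0-lema1}. Observe first that the function $u$ in \eqref{c0-e10} is exactly $v$ from \eqref{c0-e9} with $\rho=0$ and $\sigma=\alpha-2m+1$, and that the exponents in \eqref{c0-e11} obey the book-keeping identity $(\alpha-2m+1+4j)+4=\alpha-2m+1+4(j+1)$. Thus, writing $\sigma_j:=\alpha-2m+1+4j$, each application of $\Delta_\alpha$ should turn $(1+r^2)^{-\sigma_j/2}$ times a polynomial in $r^2$ into $(1+r^2)^{-\sigma_{j+1}/2}$ times a polynomial in $r^2$, and the whole claim becomes $(-\Delta_\alpha)^j u=(1+r^2)^{-\sigma_j/2}\sum_{i=0}^{j}G(i,j)r^{2i}$.

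For the base case $j=1$, apply Lemma~\ref{c0-lema1} with $\rho=0$, $\sigma=\alpha-2m+1$: since $C(0,\alpha)=0$ no negative power of $r$ survives, and evaluating $A(0,\alpha,\alpha-2m+1)$ and $B(0,\alpha,\alpha-2m+1)$ gives $-A(0,\alpha,\alpha-2m+1)=2(m-1)(\alpha-2m+1)=G(1,1)$ and $-B(0,\alpha,\alpha-2m+1)=(\alpha-2m+1)(\alpha+1)=G(0,1)$, which is \eqref{c0-e11} for $j=1$. For the inductive step, assume \eqref{c0-e11} at level $j$, apply $-\Delta_\alpha$, use linearity, and apply Lemma~\ref{c0-lema1} to each monomial $r^{2i}(1+r^2)^{-\sigma_j/2}$ (with $\rho=2i$, $\sigma=\sigma_j$); because $C(0,\alpha)=0$ the putative $r^{-2}$ term drops, and the result is again $(1+r^2)^{-\sigma_{j+1}/2}$ times a polynomial in $r^2$. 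Collecting the coefficient of $r^{2\ell}$ for $0\le\ell\le j+1$ from the three contributions $i=\ell-1$ (through $A$), $i=\ell$ (through $B$) and $i=\ell+1$ (through $C$), the proposition reduces to the purely algebraic three-term recurrence
\[
G(\ell,j+1)=-G(\ell-1,j)\,A(2\ell-2,\alpha,\sigma_j)-G(\ell,j)\,B(2\ell,\alpha,\sigma_j)-G(\ell+1,j)\,C(2\ell+2,\alpha),
\]
valid for $0\le\ell\le j+1$, with the convention $G(i,j)=0$ whenever $i<0$ or $i>j$.

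Verifying this recurrence is where the real work lies, and I expect it to be the main obstacle. The plan is to insert $G(i,j)=2^{i}\binom{j}{i}K_j\,D(i,j)\,E(i,j)$ on both sides, pull out the common factors, and reduce to elementary identities among the building blocks: the telescoping relations $K_{j+1}=(\alpha-2m+1+2j)K_j$, $E(i,j+1)=(\alpha+1+2j)E(i,j)$ and $D(i,j+1)=(m-j-1)\,D(i-1,j)$ (valid in the interior range, with explicit corrections at $i=0$ and $i=j+1$), together with Pascal's rule for $\binom{j+1}{\ell}$. The extreme cases must be checked on their own: at $\ell=0$ the $A$-term is absent, so the identity reads $G(0,j+1)=-G(0,j)B(0,\alpha,\sigma_j)-G(1,j)C(2,\alpha)$, while at $\ell=j+1$ only the $A$-term survives, giving $G(j+1,j+1)=-G(j,j)A(2j,\alpha,\sigma_j)$; in the generic range $1\le\ell\le j$ one matches the quadratic polynomials $A$, $B$, $C$ against the linear shift factors produced by $K$, $D$, $E$, and the terms combine. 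This final computation is routine but bookkeeping-heavy; everything else in the proof is a short application of Lemma~\ref{c0-lema1}.
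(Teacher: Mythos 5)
Your strategy is exactly the paper's: induct on $j$, apply Lemma~\ref{c0-lema1} term by term to $r^{2i}(1+r^2)^{-\sigma_j/2}$, shift indices, and reduce the inductive step to the three-term recurrence
\[
G(\ell,j+1)=-\Bigl[G(\ell-1,j)A(2\ell-2,\alpha,\sigma_j)+G(\ell,j)B(2\ell,\alpha,\sigma_j)+G(\ell+1,j)C(2\ell+2,\alpha)\Bigr],
\]
which is precisely the paper's identity $G(i,j+1)=-S(i,j)=-K_jH(i,j)$. Your base case computation is correct ($-B(0,\alpha,\alpha-2m+1)=(\alpha-2m+1)(\alpha+1)=G(0,1)$ and $-A(0,\alpha,\alpha-2m+1)=2(m-1)(\alpha-2m+1)=G(1,1)$), the boundary conventions $G(i,j)=0$ for $i<0$ or $i>j$ and $C(0,\alpha)=0$ are handled correctly, and the telescoping relations you cite for $K$, $D$, $E$ together with Pascal's rule are exactly the ingredients the paper uses.

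However, you stop at the step you yourself call ``the main obstacle'': the recurrence is stated but never verified, and that verification is essentially the entire content of the paper's proof. The paper splits it into four cases ($1\le i\le j-1$, $i=0$, $i=j$, $i=j+1$); in the generic range the computation is not a one-line telescoping but requires organizing the bracketed expression as a quadratic in $\alpha+1$ with coefficients $L_j$, $M_j$, $Q_j$ and recognizing the factorization $(\alpha+1)^2L_j+(\alpha+1)M_j+Q_j=-(j+1)(m-j-1)(\alpha+2j+1)(\alpha-2m+2j+1)$, which is the real crux. Since the recurrence does hold (as the paper's Cases~1--4 confirm), your plan would succeed if carried out, but as written the proposal asserts rather than proves the identity on which everything rests; to be a complete proof you must execute that algebra, including the separate checks at $\ell=0$ and $\ell=j+1$ where the definitions of $D$ and $E$ change form.
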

\begin{proof}
For $j=1$, by choosing  $\sigma = \alpha -2m + 1$ and $\rho = 0$ in the Lemma \ref{c0-lema1}, we see that
\begin{align*}
    -\Delta_{\alpha} u &=(1 + r^2)^{- \frac{\alpha -2m + 1 +4}{2}} \left[ 2(\alpha - 2m + 1)(m-1)r^2 + (\alpha - 2m + 1)(\alpha + 1)\right]\\
     &= (1+ r^2)^{- \frac{\alpha -2m + 1 + 4}{2}} \displaystyle\sum_{i=0}^{1} G(i, j) r^{2i}.
\end{align*}
Thus, \eqref{c0-e11} holds for $j=1$. By induction, suppose that \eqref{c0-e11}  holds for  $1\le j<m-1$. Hence,   we obtain
\begin{equation*}\label{c0-e14}
\begin{aligned}
    (-\Delta_{\alpha})^{j+1} u =-\sum_{i=0}^{j} G(i,j)\Delta_{\alpha} v_{i}, \;\; \text{where} \;\; v_{i}(r) = r^{2i} (1+r^2)^{- \frac{\alpha - 2m + 1 + 4j}{2}}.
    \end{aligned}
\end{equation*}
By using Lemma~\ref{c0-lema1} with $\rho = 2i$ and $\sigma = \alpha -2m + 1 + 4j$, we can write 
\begin{align*}
 \Delta_{\alpha}v_i= (1+r^{2})^{- \frac{\sigma+4}{2}} \left[r^{2i+2}A(2i, \alpha, \sigma) + r^{2i} B(2i, \alpha, \sigma) + r^{2i-2} C(2i, \alpha)\right].   
\end{align*}
Note that 
\begin{align*}
    &\sum_{i=0}^{j} G(i,j)r^{2i+2}A(2i, \alpha, \sigma)=\sum_{i=1}^{j+1} G(i-1,j)A(2i-2, \alpha, \sigma)r^{2i}\\
    &\sum_{i=0}^{j} G(i,j) r^{2i-2} C(2i, \alpha)=\sum_{i=-1}^{j-1} G(i+1,j) C(2i+2, \alpha)r^{2i}.
\end{align*}
Hence, 
\begin{align*}
    \sum_{i=0}^{j} G(i,j)\Delta_{\alpha} v_{i}& =(1+r^{2})^{- \frac{\sigma+4}{2}}\sum_{i=1}^{j+1} G(i-1,j)A(2i-2, \alpha, \sigma)r^{2i} \\
    & + (1+r^{2})^{- \frac{\sigma+4}{2}}\sum_{i=0}^{j} G(i,j)  B(2i, \alpha, \sigma)r^{2i} \\
    &+(1+r^{2})^{- \frac{\sigma+4}{2}}\sum_{i=-1}^{j-1} G(i+1,j) C(2i+2, \alpha)r^{2i}.\\
\end{align*}
We observe that  $G(-1,j)=G(j+1,j)=G(j+2,j)=C(0,\alpha)=0$. Thus, we can extend the three right hand side summation above from $0$ to $j+1$ and write
\begin{align*}
    \sum_{i=0}^{j} G(i,j)\Delta_{\alpha} v_{i}& =(1+r^{2})^{- \frac{\sigma+4}{2}}\sum_{i=0}^{j+1} S(i,j)r^{2i},
\end{align*}
where
\begin{align*}
   S(i,j)= G(i-1,j)A(2i-2, \alpha, \sigma)+G(i,j)  B(2i, \alpha, \sigma)+G(i+1,j) C(2i+2, \alpha).
\end{align*}
Thus,
\begin{equation}\label{c0-e15}
\begin{aligned}
 (-\Delta_{\alpha})^{j+1} u &= -(1+r^{2})^{- \frac{\sigma+4}{2}}\sum_{i=0}^{j+1} S(i,j)r^{2i}.
\end{aligned}    
\end{equation}
It is easy to see that 
\begin{align*}
S(i,j)= K_j H(i,j),  
\end{align*}
where
\begin{equation}\label{c0-e17}
\begin{aligned}
H(i,j) &=  2^{i-1} {j \choose i-1} D(i-1,j) E(i-1, j) A(2i -2,\alpha, \sigma)\\
&+  2^{i} {j \choose i} D(i,j) E(i, j) B(2i, \alpha, \sigma)\\
&+  2^{i+1} {j \choose i+1} D(i+1,j) E(i+1, j) C(2i + 2,\alpha).
\end{aligned}
\end{equation}
Thus, we can rewrite \eqref{c0-e15} as follows
\begin{equation}\label{c0-e16}
   (-\Delta_{\alpha})^{j+1} u = -(1+r^2)^{- \frac{\sigma + 4}{2}} K_{j}\sum_{i=0}^{j+1} H(i,j) r^{2i}.
\end{equation}
We claim that
\begin{equation}\label{c0-e18}
    G(i, j+1) = - K_{j} H(i,j), \quad \forall i = 0, 1, \cdots, j+1.
\end{equation}
We will divide the proof of \eqref{c0-e18} into four cases:
\paragraph{\textbf{Case~1:}} $i = 1, 2, \cdots, j-1$.\\
\noindent By a simple calculation, we get
\begin{align*}
     &E(i-1, j) = (\alpha + 2i - 1)(\alpha + 2i + 1) E(i+1, j),\\
     &E(i,j)  = (\alpha + 2i + 1) E(i+1, j),\\
     & D(i+1, j) = (m-j+i)(m-j+i-1) D(i-1, j),\\
    &D(i, j) = (m-j+i-1) D(i-1, j).
\end{align*}
Thus, \eqref{c0-e17} yields
\begin{equation}\label{c0-e19}
\begin{aligned}
H(i,j) &=  D(i-1, j) E(i+1, j) \Bigg[ 2^{i-1} {j \choose i-1} (\alpha + 2i - 1)(\alpha + 2i + 1) A(2i -2,\alpha, \sigma) \\
&+  2^{i} {j \choose i} (\alpha + 2i + 1) (m-j+i-1) B(2i, \alpha,\sigma)\\
&+  2^{i+1} {j \choose i+1}  (m-j+i)(m-j+i-1) C(2i + 2, \alpha) \Bigg] .
\end{aligned}
\end{equation}
We also have the identities
\begin{eqnarray*}
     \displaystyle{j \choose i } &=& {j \choose i - 1}  \dfrac{j-i+1}{i},\\ 
     \displaystyle{j \choose i + 1} &=& {j \choose i - 1}  \dfrac{(j-i+1)(j-i)}{i(i+1)},\\
    A(2i-2, \alpha,\sigma) &=& 2(i-1)(2i + \alpha - 3 ) +  2(\alpha - 2m + 1 + 4j) (2j - 2i - m + 3),\\ 
    B(2i, \alpha,\sigma) &=& 4i (2i + \alpha - 1) - (\alpha - 2m + 1 + 4j) (4i + \alpha +1),\\
    C(2i+2, \alpha) &=& 2(i+1)(\alpha+2i + 1).
\end{eqnarray*}
Hence,  \eqref{c0-e19} implies
\begin{eqnarray*}\label{c0-e20}
H(i,j) &=&  \dfrac{2^{i}}{i} {j \choose i-1} D(i-1, j) E(i+1, j)  (\alpha + 2i +1)\times\nonumber\\
& \Bigg\{& i(\alpha + 2i - 1) \big[ (i-1)(2i + \alpha - 3 ) +  (\alpha - 2m + 1 + 4j) (2j - 2i - m + 3) \big] \nonumber\\
&+&  (j-i+1)(m-j+i-1) \big[ 4i (2i + \alpha - 1) - (\alpha - 2m + 1 + 4j) (4i + \alpha +1) \big]\nonumber\\
&+& 4 (m-j+i)(m-j+i-1) (j-i)(j-i+1) \Bigg\} .
\end{eqnarray*}
By arranging the terms in brackets, we can write
\begin{equation}\label{c0-e21}
  \begin{aligned}
H(i,j) &=  \dfrac{2^{i}}{i} {j \choose i-1} D(i-1, j) E(i+1, j)  (\alpha + 2i +1)\left[(\alpha + 1)^{2}L_{j} + (\alpha+1)M_{j} + Q_{j}\right],
\end{aligned}  
\end{equation}
where
\begin{equation}\label{c0-e22}
 \begin{aligned}
L_{j} &= -(j+1)(m-j-1),  \\
M_{j} &= 2(j+1)(m-j-1)(m-2j),\\
Q_{j} &= 4j(j+1)(m-j-1)(m-j).
\end{aligned}   
\end{equation}
We observe that
\begin{align*}
   &(\alpha+1)^2 L_{j}+(\alpha+1)M_{j}  + Q_{j} = - (j+1)(m-j-1)(\alpha + 2j + 1)(\alpha - 2m + 2j + 1),\\
    &\displaystyle{j \choose i-1} = {j+1 \choose i} \dfrac{i}{j+1}, \\
    & D(i, j+1) = D(i-1, j)(m-j-1), \\
   & E(i,j+1)= (\alpha +2i+1)(\alpha +2j+1)E(i+1,j).
\end{align*}
Thus, \eqref{c0-e21} becomes
\begin{eqnarray}\label{c0-e23}
H(i,j) &=& - 2^{i} {j + 1 \choose i} (\alpha -2m +2j +1) D(i, j+1) E(i, j+1).
\end{eqnarray}
It is easy to verify that
\begin{eqnarray*}
    K_{j+1} &=& (\alpha -2m +2j +1) K_{j}, \\
    G(i, j+1) &=& 2^{i} \displaystyle{j+1\choose i} K_{j+1} D(i,j+1) E(i,j+1).
\end{eqnarray*}
Combining the above identities with \eqref{c0-e23}, we see that
$$G(i, j+1) = - K_{j} H(i,j), \,\, \forall i= 1, 2, \cdots, j-1.$$

\paragraph{\textbf{Case 2:}} $i=0$\\
We can rewrite \eqref{c0-e17} as
\begin{equation}\label{c0-e24}
    H(0,j) = D(0, j)E(0,j) B(0,\alpha, \sigma) + 2j D(1,j)E(1,j) C(2, \alpha).
\end{equation}
In addition, 
\begin{align*}
 &B(0, \alpha, \sigma) = -(\alpha +1)(\alpha + 1 - 2m + 4j), \;\; C(2, \alpha) = 2(\alpha + 1) \;\;\text{and} \;\; D(0, j) = 1; \\
 &D(1, j) = m-j, \;\;   E(0,j) = (\alpha +1) E(1, j)\;\; \text{and} \;\; E(0, j+1)=(\alpha + 1 + 2j)E(0, j).  
\end{align*}
Then, \eqref{c0-e24} implies
\begin{equation}\label{c0-e25}
\begin{aligned}
H(0,j) &= E(0,j) \left[ -(\alpha +1)(\alpha +1 - 2m +4j) + 4j (m-j)\right] \\
&= - E(0,j) (\alpha + 1 + 2j)(\alpha +1 -2m +2j)\\
&= -E(0, j+1) (\alpha +1 -2m +2j).
\end{aligned}
\end{equation}
Note that
\begin{eqnarray*}
    K_{j+1} = (\alpha +1 - 2m +2j) K_{j} \quad \text{and} \quad G(0, j+1) = K_{j+1} E(0, j+1).
\end{eqnarray*}
Hence, \eqref{c0-e25} yields
$$G(0, j+1) = - K_{j} H(0,j).$$

\paragraph{\textbf{Case 3:}} $i = j$.\\
We have 
\begin{equation}\label{c0-e26}
    H(j,j) = 2^{j-1}jD(j-1, j)E(j-1,j) A(2j-2, \alpha,\sigma) + 2^{j} D(j,j) E(j, j) B(2j, \alpha,\sigma).
\end{equation}
Also,
\begin{align*}
    & E(j-1, j) = \alpha + 2j -1, \quad E(j, j) = 1 \;\; \text{and}\;\; D(j,j) = (m-1)D(j-1,j),\\
    &A(2j - 2, \alpha,\sigma) = 2(j-1)(\alpha + 2j - 3) + 2(\alpha - 2m + 4j + 1)(3 - m),\\
    &B(2j, \alpha,\sigma) = 4j(\alpha + 2j - 1) - (\alpha -2m +4j +1)(4j + \alpha + 1).
\end{align*}
Hence
\begin{align}
    H(j,j) &= 2^{j} D(j-1,j) \big\{ j(\alpha +2j -1) \big[(j-1)(\alpha + 2j -3) + (\alpha -2m + 4j +1)(3-m) \big] \nonumber \\
    & +  (m-1) \big[ 4j(\alpha +2j -1) - (\alpha-2m +4j +1)(\alpha + 4j+ 1) \big] \big\}. \nonumber
\end{align}
\begin{align*}
    &j(\alpha +2j -1) \big[(j-1)(\alpha + 2j -3) + (\alpha -2m + 4j +1)(3-m) \big] \nonumber \\
    & +  (m-1) \big[ 4j(\alpha +2j -1) - (\alpha-2m +4j +1)(\alpha + 4j+ 1) \big] \big\}
\end{align*}
By arranging the terms, we can write
\begin{equation}\label{c0-e28}
    H(j,j) = 2^{j} D(j-1, j) [(\alpha+1)^2 L_{j} + (\alpha+1)M_{j}+ Q_{j}],
\end{equation}
where $L_j$, $M_j$ and $Q_j$ are defined in \eqref{c0-e22}.
By a simple calculation, we get
\begin{align*}
    & (\alpha+1)^2L_{j}+(\alpha+1)M_{j} +Q_{j}  = - (j+1)(m-j-1)(\alpha + 2j + 1)(\alpha - 2m + 2j + 1),\\
   & D(j, j+1) = (m-j-1) D(j-1, j),\\
  &E(j, j+1) = (\alpha +2j +1),\\
    & K_{j+1} = (\alpha +1 - 2m +2j) K_{j},\\
    & G(j, j+1) = K_{j+1} 2^{j} (j+1) K_{j} D(j, j+1) E(j, j+1).
\end{align*}
Combining the above equalities with \eqref{c0-e28}, we get
$$G(j,j+1) = - K_{j} H(j, j).$$

\paragraph{\textbf{Case 4:}} $i = j+1$.\\
In this case, \eqref{c0-e17} becomes
\begin{equation}\label{c0-e29}
    H(j+1,j) = 2^{j}D(j,j) E(j, j)A(2j, \alpha,\sigma).
\end{equation}
Note that
\begin{eqnarray*} 
        A(2j, \alpha,\sigma) = 2j(2j+ \alpha -1) + 2(\alpha -2m + 4j +1)(1-m) \quad \text{and} \quad E(j, j) =1.
\end{eqnarray*}
Thus,
\begin{equation}\label{c0-ee29}
  \begin{aligned}
    H(j+1,j) & = 2^{j+1} D(j,j)[j(2j+ \alpha -1) + (\alpha -2m + 4j +1)(1-m)]\\
   & = -2^{j+1} D(j,j) (\alpha -2m +2j +1)(m-j-1)
\end{aligned}  
\end{equation}
Also
\begin{eqnarray*}
    D(j+1,j+1) &=& (m-j-1)D(j,j) \quad \text{and} \quad    K_{j+1} = (\alpha +1 - 2m +2j) K_{j};\\
    G(j+1, j+1) &=& 2^{j+1} K_{j+1} D(j+1, j+1).
\end{eqnarray*}
Combining the above equalities with \eqref{c0-ee29}, we get
$$G(j+1, j+1) = - K_{j} H(j+1, j).$$
Finally, from \eqref{c0-e16} and \eqref{c0-e18} follows that \eqref{c0-e11} holds for $j+1$. 
\end{proof}

\begin{cor}\label{c0-corolario2}
Let $u$ be defined by \eqref{c0-e10}. Then, 
\begin{equation}\label{c0-e12}
    (- \Delta_{\alpha})^{m} u = \mathcal{P} \cdot (1+ r^2)^{- \frac{\alpha + 2m + 1 }{2}}. 
\end{equation}
\end{cor}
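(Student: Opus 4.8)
The plan is to obtain Corollary~\ref{c0-corolario2} as an immediate consequence of Proposition~\ref{c0-prop2}, specialized to $j=m$. Since $\frac{\alpha-2m+1+4m}{2}=\frac{\alpha+2m+1}{2}$, formula \eqref{c0-e11} with $j=m$ gives
\[
  (-\Delta_{\alpha})^{m} u = (1+r^{2})^{-\frac{\alpha+2m+1}{2}}\sum_{i=0}^{m} G(i,m)\,r^{2i}.
\]
Hence everything reduces to showing that the polynomial $\sum_{i=0}^{m}G(i,m)r^{2i}$ is in fact the constant $\mathcal{P}$; equivalently, that $G(i,m)=0$ for $i=1,\dots,m$ and $G(0,m)=\mathcal{P}$.

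For the vanishing of the higher coefficients I would use the definition $G(i,j)=2^{i}{j\choose i}K_{j}D(i,j)E(i,j)$ together with the formula $D(i,j)=\prod_{h=j-i+1}^{j}(m-h)$ valid for $1\le i\le j$. When $j=m$ and $1\le i\le m$, the index $h=m$ always lies in the range $\{m-i+1,\dots,m\}$, so the factor $m-m=0$ appears in the product; therefore $D(i,m)=0$, and consequently $G(i,m)=0$ for all $i=1,\dots,m$.

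For the constant term I would compute $G(0,m)$ directly. Since $D(0,m)=1$, ${m\choose 0}=1$ and $E(0,m)=\prod_{h=0}^{m-1}(\alpha+1+2h)$, one gets
\[
  G(0,m)=K_{m}E(0,m)=\Big(\prod_{h=0}^{m-1}(\alpha-2m+1+2h)\Big)\Big(\prod_{h=0}^{m-1}(\alpha+1+2h)\Big).
\]
The substitution $h\mapsto h-m$ rewrites the first product as $\prod_{h=-m}^{-1}(\alpha+1+2h)$, so concatenating the two products yields $\prod_{h=-m}^{m-1}(\alpha+1+2h)=\mathcal{P}$, which is precisely \eqref{c0-e12}.

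I do not expect any real obstacle here: once Proposition~\ref{c0-prop2} has been established, the corollary is a short bookkeeping argument. The only mildly delicate point is the last reindexing, namely recognizing that $K_{m}$ coincides with the "negative half" $\prod_{h=-m}^{-1}(\alpha+1+2h)$ of the product defining $\mathcal{P}$, so that $K_{m}E(0,m)=\mathcal{P}$.
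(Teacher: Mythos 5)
Your proposal is correct and follows essentially the same route as the paper: specialize Proposition~\ref{c0-prop2} to $j=m$, observe that $D(i,m)=0$ forces $G(i,m)=0$ for $i\ge 1$, and identify $G(0,m)=K_mE(0,m)$ with $\mathcal{P}$ by reindexing. The paper states these last two facts without the explicit verification you supply, but the argument is the same.
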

\begin{proof}
 We choose $j=m$ in the Proposition \ref{c0-prop2}, then
\begin{equation}\label{c0-e30}
    (- \Delta_{\alpha})^{m} u = (1+ r^2)^{- \frac{\alpha + 2m + 1 }{2}} \sum_{i=0}^{m} G(i, m) r^{2i}.
\end{equation}
In addition, 
\begin{equation}\label{c0-e31}
    G(0,m) = K_{m}E(0,m) = \mathcal{P} \quad \text{and} \quad G(i,m) = 0, \,\, \forall i = 1, \cdots, m.
\end{equation}
Thus, from \eqref{c0-e30} and \eqref{c0-e31} follows that \eqref{c0-e12}, as desired.
\end{proof}

\begin{cor}\label{c0-corolario3}
        The function $w$ be defined by 
\begin{equation}\label{c0-e13}
    w(r) = \mathcal{P}^{\frac{\alpha - 2m +1}{4m}} (1+ r^2)^{- \frac{\alpha - 2m +1}{2}}, \quad r>0
\end{equation}
is a positive solution to the equation in \eqref{problema m}.
\end{cor}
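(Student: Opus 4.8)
The plan is to deduce Corollary~\ref{c0-corolario3} directly from Corollary~\ref{c0-corolario2} using nothing more than the linearity of the operator $(-\Delta_\alpha)^m$ together with a short exponent bookkeeping that pins down the normalizing constant $\mathcal{P}^{\frac{\alpha-2m+1}{4m}}$. Write $u(r)=(1+r^2)^{-\frac{\alpha-2m+1}{2}}$ as in \eqref{c0-e10}, so that $w=\mathcal{P}^{\frac{\alpha-2m+1}{4m}}\,u$. First I would record that under the Sobolev condition \eqref{SSS-condi} every factor in $\mathcal{P}=\prod_{h=-m}^{m-1}(\alpha+1+2h)$ is positive (the factors range over $\alpha-2m+1,\ \alpha-2m+3,\ \dots,\ \alpha+2m-1$), hence $\mathcal{P}>0$ and $w>0$ on $(0,\infty)$; in particular $\mathcal{P}^{\frac{\alpha-2m+1}{4m}}$ is a well-defined positive real and $|w|^{2^*-2}w=w^{2^*-1}$.

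Next, since $(-\Delta_\alpha)^m$ is linear, Corollary~\ref{c0-corolario2} gives
\begin{equation*}
(-\Delta_\alpha)^m w=\mathcal{P}^{\frac{\alpha-2m+1}{4m}}\,(-\Delta_\alpha)^m u=\mathcal{P}^{\frac{\alpha-2m+1}{4m}}\cdot\mathcal{P}\,(1+r^2)^{-\frac{\alpha+2m+1}{2}}=\mathcal{P}^{\,1+\frac{\alpha-2m+1}{4m}}(1+r^2)^{-\frac{\alpha+2m+1}{2}}.
\end{equation*}
On the other side, recalling $2^*=\tfrac{2(\alpha+1)}{\alpha-2m+1}$ so that $2^*-1=\tfrac{\alpha+2m+1}{\alpha-2m+1}$, I would compute
\begin{equation*}
w^{2^*-1}=\mathcal{P}^{\frac{\alpha-2m+1}{4m}\cdot\frac{\alpha+2m+1}{\alpha-2m+1}}\,(1+r^2)^{-\frac{\alpha-2m+1}{2}\cdot\frac{\alpha+2m+1}{\alpha-2m+1}}=\mathcal{P}^{\frac{\alpha+2m+1}{4m}}(1+r^2)^{-\frac{\alpha+2m+1}{2}}.
\end{equation*}
Comparing the two displays, it remains only to check the identity of exponents of $\mathcal{P}$, namely $1+\tfrac{\alpha-2m+1}{4m}=\tfrac{4m+\alpha-2m+1}{4m}=\tfrac{\alpha+2m+1}{4m}$, which is immediate. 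Hence $(-\Delta_\alpha)^m w=w^{2^*-1}=|w|^{2^*-2}w$, and together with $w>0$ this says exactly that $w$ is a positive solution of the equation in \eqref{problema m}.

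I do not anticipate any genuine obstacle here: all the analytic content (the explicit polyharmonic computation and the vanishing of the coefficients $G(i,m)$ for $i\ge 1$) has already been absorbed into Corollary~\ref{c0-corolario2}, and what is left is the choice of the scaling factor. The only point requiring a line of care is the verification that $\mathcal{P}>0$ under \eqref{SSS-condi}, which guarantees that the real power $\mathcal{P}^{\frac{\alpha-2m+1}{4m}}$ makes sense and keeps $w$ positive; this is precisely why the Sobolev condition is invoked. (This also explains the particular form of the constant in \eqref{c0-e13 +}, since the same argument applied to the dilation invariance $r\mapsto r/\epsilon$ of $\Delta_\alpha$ will later yield the full family $w_\epsilon$ in Step~1.)
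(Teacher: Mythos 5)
Your proposal is correct and follows essentially the same route as the paper: write $w=\mathcal{P}^{\frac{\alpha-2m+1}{4m}}u$, apply Corollary~\ref{c0-corolario2} and linearity of $(-\Delta_\alpha)^m$, and match the exponents of $\mathcal{P}$ via $1+\tfrac{\alpha-2m+1}{4m}=\tfrac{\alpha+2m+1}{4m}$. Your additional remark that all factors of $\mathcal{P}$ are positive under \eqref{SSS-condi} (so the fractional power is well defined and $w>0$) is a small point the paper leaves implicit, but otherwise the arguments coincide.
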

\begin{proof}
 We note that 
$$w(r) = \mathcal{P}^{ \frac{\alpha -2m +1}{4m}} u(r) \quad \text{and} \quad 2^{*}-1 = \dfrac{\alpha + 2m +1}{\alpha - 2m +1}.$$
From Corollary \ref{c0-corolario2}, we see
\begin{equation*}
    (- \Delta_{\alpha})^{m} w = \mathcal{P}^{ \frac{\alpha - 2m +1}{4m}} (- \Delta_{\alpha})^{m} u = \mathcal{P}^{ \frac{\alpha +2m +1}{4m}} \cdot (1+ r^2)^{- \frac{\alpha + 2m + 1}{2}} = w^{2^{*}-1}.
\end{equation*}
\end{proof}
\begin{corollary}\label{a0-lema2}
For each $\epsilon>0$, the function $w_{\epsilon}$ be given by \eqref{c0-e13 +}
is a positive solution to the equation in \eqref{problema m}.
\end{corollary}
\begin{proof} Note that we can write $w_{\epsilon}(r) = \epsilon^{-\frac{\alpha+1}{2^{*}}} w(r\epsilon^{-1})$, where  $w$ is given by \eqref{c0-e13}. Thus, by induction on $m$ follows
$$(- \Delta_{\alpha})^{m} w_{\epsilon} (r) = \epsilon^{-(\frac{\alpha+1}{2^{*}} + 2m)} (- \Delta_{\alpha})^{m} w (r \epsilon^{-1}).$$
Thus, from Corollary \ref{c0-corolario3}, we see that
\begin{eqnarray}
(- \Delta_{\alpha})^{m} w_{\epsilon} (r) &=& \epsilon^{-\frac{\alpha+2m+1}{2}}  w^{2^{*}-1} (r\epsilon^{-1}) \nonumber\\
&=& \epsilon^{-\frac{\alpha+1}{2^{*}}(2^{*} - 1)}  w^{2^{*}-1} (r\epsilon^{-1}) \nonumber\\
&=& w_{\epsilon}^{2^{*}-1} (r). \nonumber
\end{eqnarray}
\end{proof}
\subsection{Proof of step 2}\label{subsec1223} Firstly, we note that the each function $w_{\epsilon}$ given by \eqref{c0-e13 +}  have a smooth even extension to  $\mathbb{R}$, i.e., with $w_{\epsilon}(r)=w_{\epsilon}(-r)$. In particular, $w^{(i)}_{\epsilon}(0)=(-1)^{i}w^{(i)}_{\epsilon}(0)$ for all $i=0,1,\cdots, 2m$. Hence, 
\begin{equation}\label{w=0odd}
 w^{(i)}_{\epsilon}(0) =0,\;\; \mbox{for}\;\; i=1,3,\cdots, 2m-1.
\end{equation} 
\begin{lemma}\label{Lemma6Sw}
  Let $w_\epsilon$ be  given by \eqref{c0-e13 +} and let $v\in \mathcal{D}^{m,2}_{\infty}(\alpha)\cap C^{2m}(0,\infty)$ be a positive solution to \eqref{problema m}, then  $g=v-w_{\epsilon}$ is nonoscillatory at $\infty$.
\end{lemma}
\begin{proof}
Let $f_i=(-\Delta_{\alpha})^{\left[ \frac{m}{2} \right]-i}g$ for $i=0,1,\cdots, [m/2]$. By contradiction, suppose that there exists an increasing sequence $(r_n)$ such that $g(r_n)=0$, for $n\in\mathbb{N}$ and  $r_{n}\to \infty$, as $n\to \infty$. From Rolle's lemma, each $f^{\prime}_i$ admits an increasing sequence $(r_{in})$ of zeros such that $r_{in}\to \infty$, for  $i=1,\cdots, [m/2]$. First, we treat the case where $m$
is an even integer. We claim that, for an arbitrary fixed $\delta$, there exists $n_i\in\mathbb{N}$ large enough
\begin{equation}\label{Sw1}
  \begin{aligned}
   |f_i(r)| & \le C_i\delta r^{\frac{4i-(\alpha+1)}{2}},\;\; i=1,\cdots, m/2,\;\;\; \mbox{for}\;\; r>r_{in_{i}}  
\end{aligned}  
\end{equation}
for some positive constant $C_i$. Since $-\Delta_{\alpha}f_1=-r^{-\alpha}(r^{\alpha}f^{\prime}_1(r))^{\prime}=f_0$ and $f^{\prime}_1(r_{1n})=0$, by integrating on $(r_{1n},r)$ we obtain 
\begin{equation}\nonumber
  \begin{aligned}
    r^{\alpha}|f^{\prime}_1(r)|& \le \Big(\int_{r_{1n}}^{r}s^{\alpha}ds\Big)^{\frac{1}{2}}\Big(\int_{r_{1n}}^{r}s^{\alpha}|f_0|^{2}ds\Big)^{\frac{1}{2}}\\
    &\le \Big(\frac{1}{\alpha+1}\Big)^{\frac{1}{2}}r^{\frac{\alpha+1}{2}}\Big(\int_{r_{1n}}^{\infty}s^{\alpha}|\nabla^{m}_{\alpha}g|^{2}ds\Big)^{\frac{1}{2}}\\
    &= \Big(\frac{1}{\alpha+1}\Big)^{\frac{1}{2}}r^{\frac{\alpha+1}{2}}\|\nabla^{m}_{\alpha}g\|_{L^{2}_{\alpha}[r_{1n},\infty)}
\end{aligned}  
\end{equation}
for $r>r_{1n}$.  Since $f_1(\infty)=0$ by Corollary~\ref{UCoroLemmaR} and Proposition~\ref{c0-prop2}, multiplying by $r^{-\alpha}$ and integrating over $[r, \infty)$, we have 
\begin{equation}\nonumber
  \begin{aligned}
   |f_1(r)| & \le \frac{2}{\alpha-3}\Big(\frac{1}{\alpha+1}\Big)^{\frac{1}{2}}\|\nabla^{m}_{\alpha}g\|_{L^{2}_{\alpha}[r_{1n},\infty)} r^{\frac{4-(\alpha+1)}{2}},
\end{aligned}  
\end{equation}
where we also have used that $\alpha+1>2m\ge 4$, if $m$ is even (cf.\eqref{SSS-condi}).
Since $g\in  \mathcal{D}^{m,2}_{\infty}(\alpha)$ and $r_{1n}\to \infty$ as $n\to \infty$, we can choose $n_1$ such that \eqref{Sw1}  for $i=1$. Suppose \eqref{Sw1} holds for some $1\le i\le \frac{m}{2}-1$. Note that
\begin{align}
  -r^{-\alpha}(r^{\alpha}f^{\prime}_{i+1}(r))^{\prime}= -\Delta_{\alpha}f_{i+1}(r)=-\Delta_{\alpha}(-\Delta_{\alpha})^{\frac{m}{2}-(i+1)}g(r)=f_i(r).
\end{align}
Thus, by choosing $n_{i+1}$ large enough to $r_{(i+1)n_{i+1}}>r_{in_i}$ and by integrating on $(r_{(i+1)n},r)$  we obtain  
\begin{equation}
  \begin{aligned}
    r^{\alpha}|f^{\prime}_{i+1}(r)|& =\int_{r_{(i+1)n}}^{r}s^{\alpha}|f_i(s)|ds\le  C\delta \int_{r_{(i+1)n}}^{r}s^{\alpha+\frac{4i-(\alpha+1)}{2}}ds \\
    &\le C\delta \int_{0}^{r}s^{\frac{\alpha+4i-1}{2}}ds\\
    &=\frac{ 2C\delta }{\alpha+4i+1}r^{\frac{\alpha+4i+1}{2}}
\end{aligned}  
\end{equation}
for $r>r_{(i+1)n_{i+1}}$. By Corollary~\ref{UCoroLemmaR} and Proposition~\ref{c0-prop2}, we are able to integrate again to obtain 
\begin{equation}\label{SwHI}
  \begin{aligned}
    |f_{i+1}(r)|&\le \frac{ 2C\delta }{\alpha+4i+1} \int_{r}^{\infty}s^{\frac{4i-\alpha+1}{2}}ds\\
    &=\frac{2C\delta }{\alpha+4i+1}\frac{2}{\alpha- 4i-3}r^{\frac{4(i+1)-(\alpha+1)}{2}}
\end{aligned}  
\end{equation}
where we have used that $4i+3-\alpha\le 4(\frac{m}{2}-1)+3-\alpha=2m -4+3-\alpha=2m-\alpha-1<0$. Thus, \eqref{SwHI}
ensures \eqref{Sw1} for $i+1$. Now, by setting $$n=\max\{n_1, \cdots, n_{\frac{m}{2}}\}\;\; \mbox{and}\;\; \rho_n=\max\{r_{1n_1},\cdots, r_{\frac{m}{2}n_{\frac{m}{2}}} \}$$
we obtain 
\begin{equation}\label{Sw2}
  \begin{aligned}
   |f_i(r)| & \le C\delta r^{-\frac{\alpha-4i+1}{2}},\;\;\; i=1,\cdots, m/2,\;\;\; \mbox{for}\;\; r>\rho_{n}  
\end{aligned}  
\end{equation}
where  $C=\max\{C_1, \cdots, C_{\frac{m}{2}}\}$.

By applying the mean value theorem and Corollary~\ref{a0-lema2}, we deduce that $g$ satisfies the  linear equation
\begin{equation}\label{SWLinear}
 (-\Delta_{\alpha})^{m}g=\xi(r)g,\;\;\; r>0   
\end{equation}
where $\xi(r)=(2^{*}-1)\big[\theta v+(1-\theta)w_{\epsilon}\big]^{{2^{*}-2}}$ for some $0\le \theta=\theta(r)<1$. From \eqref{c0-e13 +} and \eqref{Sw2}, there exist positive constants $C_0$ and $\rho$ such that
\begin{equation}\nonumber
   \left\{\begin{aligned}
        & w_{\eps}(r)\le \frac{C_0\delta}{2}r^{-\frac{\alpha-2m+1}{2}}\\
        &v(r)\le C_0\delta r^{-\frac{\alpha-2m+1}{2}} \\
        &\xi(r)\le (2^*-1)(C_0\delta)^{2^{*}-2}r^{-2m}
    \end{aligned}\right.,\quad \mbox{for}\;\; r>\rho,
\end{equation}
where we have used that $2^*-2=4m/(\alpha-2m+1)$. Since $\delta>0$ is arbitrary we have
\begin{equation}\label{Sw3}
    \lim_{r\to\infty} r^{2m}\xi(r)=0.
\end{equation}
Now, consider the Cauchy-Euler  type equation  (cf. \cite[Lemma 2.9]{JN-JF})
\begin{equation}\label{SWEuler}
 (-\Delta_{\alpha})^{m}y-\sigma r^{-2m}y=0,\;\;\;\mbox{for}\;\;  r>0.   
\end{equation}
Since $\Delta_{\alpha}(r^{\lambda})=\lambda(\lambda+\alpha-1)r^{\lambda-2}$ for $\lambda\ge 1$, the characteristic polynomial of \eqref{SWEuler} is given by
\begin{equation}\label{Cauchy-Euler-Poly}
P(\lambda)=(-1)^m\prod_{j=1}^{m}(\lambda-2j+2)(\lambda-2j+\alpha+1)-\sigma   
\end{equation}
which has $2m$ distinct real roots for $\sigma>0$ sufficiently small and $\alpha-2m+1>0$. Then, the solutions of \eqref{SWEuler} are nonoscillatory at $\infty$, if $\sigma$ is small enough. From \eqref{Sw3}, we have $\xi(r)<\sigma r^{2m}$ for $r$ large enough and from the comparison result in \cite[Corollary~1]{zbMATH03572527} we conclude that \eqref{SWLinear} is also nonoscillatory. This contradiction implies that our result holds for $m$ even. An analogous argument allows us to conclude the result for odd $m$ as well.
\end{proof}
Let $v\in \mathcal{D}^{m,2}_{\infty}(\alpha)\cap C^{2m}(0,\infty)$ be a positive $(2m-1)$-th nonsingular at $r=0$ solution to \eqref{problema m} and such that $v^{(i)}(0) = 0$, for  $i = 1, 3, \ldots, 2m-1$. As in Lemma~\ref{Lemma6Sw}, set $g=v-w_{\epsilon}$, where  $w_{\epsilon}$ given by \eqref{c0-e13 +}. 
Then, by setting
\begin{equation}\nonumber
    \left\{\begin{aligned}
        &g_j=(-\Delta_{\alpha})^{j}g,\;\;\; j=0,1,\cdots m-1\\
       &f=v^{2^{*}-1}-w^{2^{*}-1}_{\epsilon} 
    \end{aligned}\right.
\end{equation}
we have that $g_0,g_1\cdots, g_{m-1}$ solves the system
\begin{equation}\label{SwSystem}
\left\{\begin{aligned}
&\left.
\begin{aligned}
&-\Delta_{\alpha} g_{j} = g_{j+1}, && i=0,1,\cdots,m-2\\
&-\Delta_{\alpha} g_{m-1} = f
\end{aligned}
\right\}
\quad \text{in } (0,\infty),
\\
& g_0 \in \mathcal{D}^{m,2}_{\infty}(\alpha),\qquad 
  g_0^{(i)}(0)=0,\quad i=1,3,\cdots,2m-1
\end{aligned}\right.
\end{equation}
where we also have used \eqref{w=0odd}.

\begin{remark} Note that Step~2 follows if  $g_0=v-w_{\epsilon}\equiv 0$ for some $\epsilon>0$.
\end{remark}
Firstly, we will prove the following:
\begin{lemma}\label{LemmaSw7}
  Suppose that $v(0)=w_{\epsilon}(0)$. Then,   neither $g_0$ nor $g_1$ has any zeros in $(0,\infty)$, unless $g_0\equiv0$.
\end{lemma}
\begin{proof}
It is well-known from the theory of ordinary differential equations that the set of zeros of $g_0$ in $(0,\infty)$ cannot have an accumulation point. Hence, by the Lemma~\ref{Lemma6Sw}, $g_0$ has only finitely many zeros in $(0,\infty)$. Let us say that there are exactly $k$ zeros of $g_0$ in $(0,\infty)$, and denote them by $0<r_1<r_2<\ldots<r_k$ and set $r_0=0$. Note that $g_0(0)=0$, since we are assuming $v(0)=w_{\epsilon}(0)$. From Lemma~\ref{lemma-iterationRev}, we get $v^{\prime}(0)=0$. This together with  \eqref{w=0odd} yield $g^{\prime}_0(0)=0$  and, from Corollary~\ref{UCoroLemmaR} we obtain $g_0(\infty)=0$. Thus, $g_0$ has at least $k+2$ extremum points $s_0<s_1< \cdots <s_k<s_{k+1}$ with $s_0=0$, $s_{k+1}>r_k$, and $s_{\ell}\in (r_{\ell-1}, r_{\ell})$ for $\ell=1,\dots, k$. From $\Delta_{\alpha}g_0=-g_1$, we get $(r^{\alpha}g^{\prime}_0)^\prime=-r^{\alpha}g_1$. Using $g^{\prime}_0(s_{\ell})=0$ and integrating over $(s_{\ell},t)$,   we get 
\begin{equation}\label{passS1}
    g^{\prime}_0(t)=-\frac{1}{t^{\alpha}}\int_{s_{\ell}}^{t}s^{\alpha}g_1(s)ds,\;\; \forall\;t>s_{\ell},\;\; \;\ell=0,1, \cdots, k+1.
\end{equation}
Taking $\ell=0$ and $t=s_1$, and using $g_0'(s_1)=0$, we deduce that
$
\int_{0}^{s_1} s^{\alpha} g_1(s)\,ds=0.
$
Therefore,  either $g_1$
changes sign on $(0,s_1)$ or  $g_1\equiv0$ on $(0, s_1)$. If $g_1 \equiv 0$ on $(0,s_1)$, then $g_1=-\Delta_{\alpha} g_0 \equiv 0 $ and so $(r^{\alpha} g^{\prime}_0)'=0$ on $(0,s_1).$
Hence, $g_0'(s_1)=0$ yields $g^{\prime}_0\equiv0$ on $(0,s_1)$
and therefore $g_0(0)=0$ implies $g_0 \equiv 0 $ on $(0,s_1)$ contradicting the fact that the zeros of $g_0$ have no accumulation points.
Hence, $g_1$ changes sign on $(0, s_1)$. Analogously, from \eqref{passS1} with $\ell=1$ and $t=s_2>s_1$, and using $g_0^{\prime}(s_2)=0$ it follows that either $g_1$ changes sign on $(s_1,s_2)$ or $g_1 \equiv 0$ on $(s_1,s_2)$. Since $r_1\in (s_1, s_2)$ and $g_0(r_1)=0$, the second alternative also implies $g_0 \equiv 0$ on $(s_1,s_2)$ which is impossible. By iterating this argument, we observe that $g_1$ admits successive sign changes on each interval $(s_{\ell-1}, s_\ell)$, $\ell = 1, \ldots, k+1$. So, $g_1$ has at least $k+1$ zeros in $(0,\infty)$. 

From Corollary~\ref{UCoroLemmaR}, Corollary~\ref{corollaryreferee} and  Proposition~\ref{c0-prop2}, we get $g^{\prime}_j(0)=0$ and $g_j(\infty)=0$ for $j=1, \cdots m-1$. Hence,  the same argument ensures that each $g_2, g_3, \cdots, g_{m-1}, \Delta_{\alpha}g_{m-1}=-f$ have at least $k+1$ positive zeros. Since $f=v^{2^{*}-1}-w^{2^{*}-1}_{\epsilon}$ has exactly the same $k$ positive zeros of $g_0$, we have a contradiction. Hence, $k=0$ and $g_0$ does not change sign in $(0,\infty)$. Now, if $g_1(r_1)=0$ for some $r_1>0$, then $g_1$ has at least two extremum points $0=s_0<r_1<s_1$. Since $g^{\prime}_1(0)=0$, by integrating $(r^{\alpha}g^{\prime}_1)^{\prime}=-r^{\alpha}g_2$ we obtain 
\begin{equation}\label{G1passS1}
    g^{\prime}_1(t)=-\frac{1}{t^{\alpha}}\int_{0}^{t}s^{\alpha}g_2(s)ds,\;\; \forall\;t>0.
\end{equation}
Since $g^{\prime}_1(s_1)=0$ and  $g_1(r_1)=0$, as before, we conclude that $g_2$ changes sign on $(0, s_1)$. By repeating the argument of the first part we are able to show that $f$ has a positive zero, and then $g_0$ would have a positive zero.
\end{proof}
\begin{lemma}\label{lemmabrabo}
Let $g=v-w_{\epsilon}$ under the assumptions of Lemma~\ref{LemmaSw7}. If $g^{(2k)}(0)=0$ for $k=1,\cdots,K$ with $K\le m-1$, then
$$(-\Delta_{\alpha})^{K+1}g(0) = C_{\alpha,K}(-1)^{K+1}g^{(2K+2)}(0)$$
for some constant $C_{\alpha,K}>0$.
\end{lemma}
\begin{proof}
First, we note that \eqref{Rw=v0}, the assumption $v^{(j)}(0)=0$ for $j=1, 3, \cdots, 2m-1$ and \eqref{w=0odd} yield
\begin{equation}\label{G0odd}
    g^{(j)}(0)=0,\;\; \mbox{for} \;\; j=1,3,\cdots, 2m-1.
\end{equation}
This together with  the assumption $g^{(2k)}(0)=0$, for $k=1, \cdots, K$ yield
\begin{equation}\label{G0oddFUL}
    g^{(j)}(0)=0,\;\; \mbox{for} \;\; j=1,2,3,\cdots, 2K+1.
\end{equation}
Let us denote 
\begin{equation}\label{RTaylor}
    R(r)=g(r)-\frac{g^{(2K+2)}(0)}{(2K+2)!}r^{2K+2}.
\end{equation}
From the L'Hospital  rule together with   \eqref{G0oddFUL}, we get 
\begin{equation}\nonumber
  \lim_{r\to 0}\frac{g^{(1)}(r)}{r^{2K+1}}=\lim_{r\to 0}\frac{g^{(2)}(r)}{(2K+1)r^{2K}}=\cdots =\lim_{r\to 0}\frac{g^{(2K+1)}(r)}{(2K+1)! r}=\frac{g^{(2K+2)}(0)}{(2K+1)!}.
\end{equation}
 Then, 
\begin{equation}
 \lim_{r\to 0}\frac{R^{(1)}(r)}{r^{2K+1}}=\lim_{r\to 0}\Big[\frac{g^{(1)}(r)}{r^{2K+1}}-\frac{g^{(2K+2)}(0)}{(2K+1)!}\Big]=0. 
\end{equation}
Analogously 
\begin{equation}
 \lim_{r\to 0}\frac{R^{(2)}(r)}{r^{2K}}=\lim_{r\to 0}\Big[\frac{g^{(2)}(r)}{r^{2K}}-\frac{g^{(2K+2)}(0)}{(2K)!}\Big]=0. 
\end{equation}
Generally,
\begin{equation}\label{RGeralFull}
 \lim_{r\to 0}\frac{R^{(k)}(r)}{r^{2K+2-k}}=\lim_{r\to 0}\Big[\frac{g^{(k)}(r)}{r^{2K+2-k}}-\frac{g^{(2K+2)}(0)}{(2K+2-k)!}\Big]=0,\;\;\mbox{for}\;\; k=1,2,  \cdots, 2K+1.
\end{equation}
By using the expansion in \cite[Lemma 2.9]{JN-JF}, we obtain
\begin{equation}\nonumber
    (-\Delta_{\alpha})^{K+1}R=(-1)^{K+1}\Big[R^{(2K+2)}+c_1\frac{R^{(2K+1)}}{r}+c_2\frac{R^{(2K)}}{r^2}+\cdots+c_{2K}\frac{R^{(2)}}{r^{2K}}+c_{2K+1}\frac{R^{(1)}}{r^{2K+1}}\Big].
\end{equation}
Noticing that $R^{(2K+2)}(0)=0$, from \eqref{RGeralFull}
\begin{equation}\label{DR0Full}
   (-\Delta_{\alpha})^{K+1}R(0)=\lim_{r\to 0} (-\Delta_{\alpha})^{K+1}R=0.
\end{equation}
Recalling $\Delta_{\alpha}(r^{\lambda})=\lambda(\lambda+\alpha-1)r^{\lambda-2}$ for $\lambda\ge 1$, from \eqref{RTaylor} we have
\begin{align*}
(-\Delta_{\alpha})^{K+1}g&=(-1)^{K+1}\Delta^{K+1}_{\alpha}\Big[R+\frac{g^{(2K+2)}(0)}{(2K+2)!}r^{2K+2}\Big]\\
&=(-\Delta_{\alpha})^{K+1}R+C_{\alpha,K}(-1)^{K+1}g^{(2K+2)}(0),
\end{align*}
where
$$C_{\alpha,K}=\frac{1}{(2K+2)!}\prod_{\ell=1}^{K+1}(2K+2-2\ell+2)(2K+2-2\ell+\alpha+1)>0,\;\;\; \mbox{for}\;\; \alpha-2m+1>0.$$
Taking into account \eqref{DR0Full}, letting $r\to 0$
\begin{equation}\label{Df_0=CFull}
  (-\Delta_{\alpha})^{K+1}g(0)=C_{\alpha,K}(-1)^{K+1}g^{(2K+2)}(0).
\end{equation} 
\end{proof}
We are now ready to prove Step~2, which is stated in the following result:
\begin{lemma}\label{lema m} 
If $v\in \mathcal{D}^{m,2}_{\infty}(\alpha)\cap C^{2m}(0,\infty)$ is a positive $(2m-1)$-th nonsingular at $r=0$ solution to \eqref{problema m} such that $v^{(i)}(0) = 0$, for  $i = 1, 3, \ldots, 2m-1$, then  $v=w_{\epsilon}$ for some $\epsilon>0$, where \( w_{\epsilon} \) is given by \eqref{c0-e13 +}.
\end{lemma}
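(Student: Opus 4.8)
The plan is to transform \eqref{problema m} into an autonomous ODE by an Emden--Fowler substitution, and then to obtain the rigidity from the positivity cascade furnished by the representation formula \eqref{representação deltaj} together with a conserved Hamiltonian.

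First I would set $\beta:=(\alpha-2m+1)/2>0$ and write $v(r)=r^{-\beta}\phi(-\ln r)$, $t=-\ln r$. Iterating the identity $-\Delta_{\alpha}\big(r^{-\mu}g(-\ln r)\big)=r^{-\mu-2}\big(-g''+(\alpha-2\mu-1)g'+\mu(\alpha-\mu-1)g\big)(-\ln r)$ through the exponents $\mu=\beta,\beta+2,\dots,\beta+2(m-1)$, one finds
\[
(-\Delta_{\alpha})^{m}\big(r^{-\beta}g(-\ln r)\big)=r^{-\beta-2m}\,(\mathcal{L}g)(-\ln r),\qquad \mathcal{L}:=\prod_{k=0}^{m-1}\Big(-\frac{d^{2}}{dt^{2}}+(\beta+2k)^{2}\Big);
\]
the first-order terms that appear at the intermediate stages cancel once all $m$ stages are composed, because the exponents $\beta+2(j-1)$ are symmetric about their mean $\beta+m-1$, so the factors pair off into the even operators $-d^{2}/dt^{2}+c^{2}$. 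Hence \eqref{problema m} becomes $\mathcal{L}\phi=\phi^{2^{*}-1}$ on $\mathbb{R}$, the identity $\|v\|_{\nabla^{m}_{\alpha}}^{2}=\|v\|_{L^{2^{*}}_{\alpha}}^{2^{*}}$ becomes $\int_{\mathbb{R}}\phi\,\mathcal{L}\phi\,dt=\int_{\mathbb{R}}\phi^{2^{*}}dt<\infty$ (so $\phi\in H^{m}(\mathbb{R})$, since $\mathcal{L}$ is positive with nonnegative coefficients in powers of $-d^{2}/dt^{2}$), and — as one checks from Corollary~\ref{c0-corolario3} — $w_{\epsilon}$ in \eqref{c0-e13 +} corresponds to the translate $\phi_{\epsilon}(t)=\mathcal{P}^{\frac{\beta}{2m}}\,2^{-\beta}\big(\cosh(t+\ln\epsilon)\big)^{-\beta}$. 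So it suffices to show that the $\phi$ coming from our hypotheses equals $\mathcal{P}^{\frac{\beta}{2m}}2^{-\beta}\big(\cosh(t-t_{0})\big)^{-\beta}$ for some $t_{0}\in\mathbb{R}$; then $v=w_{\epsilon}$ with $\epsilon=e^{-t_{0}}$.

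Next I would translate the hypotheses. Being classical and lying in $\mathcal{D}^{m,2}_{\infty}(\alpha)$, $v$ is a weak solution of \eqref{problema m} (density of $\Upsilon$, Lemma~\ref{lemma crucial D}, together with the decay of Lemma~\ref{LR2} makes the boundary terms in the integration by parts vanish), so Theorem~\ref{Existencia de minimizante suave}, Remark~\ref{u-monotona} and \eqref{representação deltaj} apply: the iterates $g_{j}:=(-\Delta_{\alpha})^{j}v$, $0\le j\le m-1$, are positive, strictly decreasing, finite at $r=0$ (here the odd-order-zero condition is used, via L'H\^opital) and vanish at $r=\infty$. In the $t$-variable this gives $\phi>0$, $\phi\to0$ as $t\to\pm\infty$, and a monotonicity cascade; the $(2m-1)$-nonsingular condition refines the behaviour at $r=0$ into $\phi(t)=v(0)\,e^{-\beta t}\big(1+o(1)\big)$ as $t\to+\infty$, with $v(0)=\sup_{(0,\infty)}v>0$ (the slowest characteristic exponent of $\mathcal{L}$), and a bootstrap in \eqref{representação deltaj} yields the matching exponential decay of $\phi$ and all of its derivatives as $t\to-\infty$.

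Finally, since $\mathcal{L}\phi=\phi^{2^{*}-1}$ is autonomous it is the Euler--Lagrange equation of a $t$-independent Lagrangian, so the associated Hamiltonian $\mathcal{E}(\phi,\phi',\dots,\phi^{(2m-1)})$ is constant; by the decay of $\phi$ and its derivatives at $\pm\infty$, $\mathcal{E}\equiv0$. I would then pick $\epsilon$ (equivalently $t_{0}=-\ln\epsilon$) so that $\sup\phi=\sup\phi_{\epsilon}$, use the cascade (each $g_{j}$ decreasing) to show $\phi$ has a single critical point, and the reversibility $t\mapsto-t$ to get symmetry of $\phi$ about it, thereby reducing matters to a Cauchy problem at $t_{0}$ whose decaying regular solution is unique; this forces $\phi=\phi_{\epsilon}$, hence $v=w_{\epsilon}$. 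The main obstacle is exactly this last step: for $m\ge2$ there is no direct maximum principle for $\mathcal{L}$, so the uniqueness of the critical point and the shooting/non-oscillation argument identifying the homoclinic orbit must be driven entirely by the $m$-fold positivity $(-\Delta_{\alpha})^{j}v>0$ and by $\mathcal{E}\equiv0$ — this is where I would import the ideas of \cite{S} and \cite{EFJ}. Secondary technical points are the clean cancellation of the first-order terms in the reduction and the two-sided sharp-decay bootstrap needed to justify $\mathcal{E}\equiv0$.
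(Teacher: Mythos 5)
Your Emden--Fowler reduction is correct as far as it goes: with $\beta=(\alpha-2m+1)/2$ the stage-$k$ operator is $-(D+\beta+2k)\bigl(D-\beta-2(m-1-k)\bigr)$, and the product over $k=0,\dots,m-1$ does collapse to $\prod_{k}\bigl(-D^{2}+(\beta+2k)^{2}\bigr)$, and $w_{\epsilon}$ does correspond to $\mathcal{P}^{\beta/2m}\,(2\cosh(t+\ln\epsilon))^{-\beta}$. The problem is that the entire burden of the lemma is then placed on the final step --- identifying $\phi$ with a translate of the $\cosh^{-\beta}$ profile --- and that step is not actually carried out; you flag it yourself as ``the main obstacle.'' Concretely: (i) reversibility of the autonomous equation only tells you that $t\mapsto\phi(2t_{0}-t)$ is again a solution; to conclude $\phi$ is symmetric about its critical point $t_{0}$ you would need \emph{all} odd derivatives of $\phi$ to vanish at $t_{0}$, and knowing $\phi'(t_{0})=0$ gives none of the others, so the claimed symmetry is unproved. (ii) Even granting symmetry, the Cauchy data at $t_{0}$ consist of the $m$ even derivatives $\phi(t_{0}),\phi''(t_{0}),\dots,\phi^{(2m-2)}(t_{0})$; the conservation law $\mathcal{E}\equiv0$ and the normalization $\sup\phi=\sup\phi_{\epsilon}$ remove only two of these, leaving $m-2$ free shooting parameters for $m\ge3$ that must be pinned down by the homoclinic (decay) condition at $t\to\pm\infty$. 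That non-oscillation/shooting analysis for $\prod_{k}(-D^{2}+c_{k}^{2})\phi=\phi^{p}$ is precisely the hard part --- it is the analogue of the classification that is open for singular solutions when $m\ge3$ (cf.\ the discussion around \cite[Conjecture~4]{AW2022}) --- and neither the positivity cascade $(-\Delta_{\alpha})^{j}v>0$ nor $\mathcal{E}\equiv0$ is shown to supply it. There is also a minor circularity in deducing $\phi\in H^{m}(\mathbb{R})$ from $\int\phi\,\mathcal{L}\phi=\int\phi^{2^{*}}$ before the decay needed to kill the boundary terms has been established.

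For contrast, the paper avoids the autonomous ODE entirely. It first fixes $\bar\epsilon$ by $w_{\bar\epsilon}(0)=v(0)$ and observes that, by uniqueness for the initial value problem at $r=0$, it suffices to match the remaining even derivatives $v^{(2i)}(0)=w_{\bar\epsilon}^{(2i)}(0)$, $1\le i\le m-1$. The difference $g=w_{\bar\epsilon}-v$ solves the \emph{linear} equation $(-\Delta_{\alpha})^{m}g=w_{\bar\epsilon}^{q}-v^{q}$, whose general solution is an explicit combination of $\{1,r^{2},\dots,r^{2(m-1)}\}$ and the singular powers $r^{-(\alpha-2i+1)}$ plus a variation-of-parameters term $\bar y$; finiteness of $g$ at $r=0$ kills the singular powers, the decay of Lemma~\ref{LR2} shows $\bar y(r)\to0$ as $r\to\infty$, and then $g\to0$ at infinity forces the polynomial part (hence every unmatched even derivative at $0$) to vanish. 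If you wish to salvage your route you would need to supply, for general $m$, the symmetry of $\phi$ and the uniqueness of the zero-energy homoclinic orbit with prescribed decay rate $e^{-\beta|t|}$; as written, the proposal does not prove the lemma.
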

\begin{proof}
    From Corollary \ref{a0-lema2}, for each $\epsilon>0$ the function  $w_{\epsilon}$ solves \eqref{problema m} with $$w_{\epsilon}(0)=\mathcal{P}^{\frac{\alpha-2m+1}{4m}}\epsilon^{-\frac{\alpha-2m+1}{2}}.$$ From $v(0)>0$, we can choose $\Bar{\epsilon}>0$ such that 
\begin{equation}\label{Rw=v0}
    w_{\bar{\epsilon}}(0) = v(0).
\end{equation}
Setting $v_j=(-\Delta_{\alpha})^{j}v$, for $j=0, 1, \cdots, m$, by Corollary~\ref{corollaryreferee}, we conclude that  $v_0, v_1, \cdots, v_{m-1}$  satisfy the system
\begin{equation}\label{PVISw}
\left\{\begin{aligned}
&\left.
\begin{aligned}
&u_0=u\\
&u_j=-\Delta_{\alpha}u_{j-1}, \quad j=1,\cdots,m-1\\
&-\Delta_{\alpha}u_{m-1}= |u|^{2^{*}-2}u\\
\end{aligned}
\right\}
\quad \text{in } (0,\infty),
\\
& u_j(0)=v_j(0)\;\; \mbox{and}\;\;  u^{\prime}_j(0)=0,\quad j=0,1,\cdots,m-1.
\end{aligned}\right.
\end{equation} 

Due to the singularity of the term $\frac{\alpha}{r}$ at the origin ($r=0$), the uniqueness of solution to the initial value problem \eqref{PVISw} cannot be derived directly from the classical Picard-Lindel\"{o}f theorem. Nevertheless, using a Volterra integral approach \cite[Theorem~XIII, p.71]{Walter}, we prove below, in Corollary~\ref{v=w}, that \eqref{PVISw} admits at most one solution $u\in C^{2m}(0,\infty)$. 

In the same way of Lemma~\ref{Lemma6Sw} and Lemma~\ref{LemmaSw7}, we set
\begin{equation}\label{gif}
    \left\{\begin{aligned}
    &g=v-w_{\bar{\epsilon}}\\
     &g_j=(-\Delta_{\alpha})^{j}g,\;\;\; j=0,1,\cdots m-1\\
       &f=v^{2^{*}-1}-w^{2^{*}-1}_{\bar{\epsilon}}. 
    \end{aligned}\right.
\end{equation}
If $g_j(0)=0$ for $j=1,\cdots, m-1$, in view of \eqref{Rw=v0}, both $v$ and $w_{\bar{\epsilon}}$ satisfy the initial value problem \eqref{PVISw} and, by uniqueness proved in Corollary~\ref{v=w}, we obtain $v=w_{\bar{\epsilon}}$. By contradiction, suppose that  there exists 
\begin{equation}\label{JcontraSw-Hfalse}
1\le J\le m-1\;\; :\;\;g_{J}(0)>0\;\mbox{and} \;\; g_{j}(0) =0,\;\;\mbox{for}\;\; 1\le j<J.  
\end{equation}
Let $f_0=(-1)^{J}g_0$. We claim that 
\begin{equation}\label{ClaimFinal}
    \begin{aligned}
 f_0^{(j)}(0)=0\;\;\mbox{for}\;\; 1\le j<2J \;\;\mbox{and}\;\;  f^{(2J)}_0(0)>0.
    \end{aligned}
\end{equation}
Since $v^{(j)}(0)=0$ for $j=1, 3, \cdots, 2m-1$ and \eqref{w=0odd} hold, we already have
\begin{equation}\label{f0odd}
    f^{(j)}_0(0)=(-1)^{J}\big[v^{(j)}(0)-w^{(j)}_{\bar\epsilon}(0)\big]=0,\;\; \mbox{for} \;\; j=0,1,3,\cdots, 2m-1.
\end{equation}
If $J=1$, \eqref{f0odd} ensures $f^{\prime}_0(0)=0$ and  by the L'Hospital rule, we have  
$$f^{(2)}_0(0)=-g^{(2)}_0(0)=-\frac{1}{\alpha+1}\lim_{r\to 0}\Big(g^{(2)}_0(r)+\frac{\alpha}{r}g^{(1)}_0(r)\Big)=\frac{1}{\alpha+1}(-\Delta_{\alpha}g_0)=\frac{1}{\alpha+1}g_1(0)>0.$$
Thus, we can assume $J>1$. In order to show \eqref{ClaimFinal},  we first prove that for $J$ as in \eqref{JcontraSw-Hfalse}, we have 
\begin{equation}\label{CGFinal}
    \begin{aligned}
f_0^{(2j)}(0)=(-1)^{J}g^{(2j)}_0(0)=0,\;\;\mbox{for}\;\; 1\le j<J.
    \end{aligned}
\end{equation}
We proceed by induction. By the L'Hospital rule again together with \eqref{f0odd}, we have  
$$0=g_{1}(0)=-\Delta_{\alpha}g_0(0)=-\lim_{r\to 0}\Big(g^{(2)}_0(r)+\frac{\alpha}{r}g^{(1)}_0(r)\Big)=-(1+\alpha)g^{(2)}_0(0)$$
which gives \eqref{CGFinal} for $j=1$. Suppose that $g^{(2j)}(0)=0$ for $1\le j<J-1$. Then,  from Lemma~\ref{lemmabrabo} and the definition of $J$, we have 
$$0=g_{j+1}(0)=(-\Delta_{\alpha})^{j+1}g_0(0) = C_{\alpha,j}(-1)^{j+1}g^{(2j+2)}_0(0)$$
for some constant $C_{\alpha,j}>0$. Thus, $g^{(2j+2)}_0(0)=0$. Therefore, $g_0^{(2j)}(0)=0$ for $j=1, \cdots, J-1$. In view of \eqref{CGFinal}, from Lemma~\ref{lemmabrabo} we also have 
\begin{equation}\label{2JP}
    C_{\alpha,J}f^{(2J)}_0(0)=C_{\alpha,J}(-1)^{J}g^{(2J)}_0(0)=(-\Delta_{\alpha})^{J}g_0(0) = g_{J}(0)>0,
\end{equation}
with $C_{\alpha,J}>0$. From \eqref{f0odd}, \eqref{CGFinal} and \eqref{2JP}, we conclude  that \eqref{ClaimFinal} holds. 
Now, from \eqref{ClaimFinal} and Taylor's expansion, we have that $f_0$ has a local minimum at $r=0$ with $f_0(0)=0$. Thus, by Lemma~\ref{LemmaSw7} we have $f_0>0$  on $(0,\infty)$.  Analogously, let $f_1=-\Delta_{\alpha}f_0$. Note that 
$$
\Delta^{j}_{\alpha}f_1=-\Delta^{j+1}_{\alpha}f_0=(-1)^{J-1}\Delta^{j+1}_{\alpha}g_0=(-1)^{J-1-(j+1)}(-\Delta_{\alpha})^{j+1}g_0=(-1)^{J-j-2}g_{j+1}.
$$
In particular, by definition of $J$
\begin{equation}
    \Delta^{j}_{\alpha}f_1(0)=0,\;\; \mbox{for all}\;\; j=0,1,\cdots, J-2\;\;\mbox{and}\;\; \Delta^{J-1}_{\alpha}f_1(0)=-g_{J}(0)<0.
\end{equation}
By repeating the above argument with $f_1$ instead of $f_0$, we can see that 
\begin{equation}\label{F1ClaimFinal}
    \begin{aligned}
 f_1^{(j)}(0)=0\;\;\mbox{for}\;\; 1\le j<2J-2 \;\;\mbox{and}\;\;  f^{(2J-2)}_1(0)<0.
    \end{aligned}
\end{equation}
Hence, $f_1$ has a local maximum at $r=0$ with $f_1(0)=0$, and by Lemma~\ref{LemmaSw7} we obtain $f_1 < 0$ on $(0, \infty)$. On the other hand, $f_1=-\Delta_{\alpha}f_0=-r^{-\alpha}(r^{\alpha}f^{\prime}_0(r))^{\prime}$ and $f^{\prime}_0(0)=0$ imply
\begin{equation}\nonumber
    f_0^{\prime}(r)=-\frac{1}{r^{\alpha}}\int_{0}^{r}s^{\alpha}f_1(s)ds>0,\;\; \mbox{for all}\;\; r>0.
\end{equation}
Thus, $f_0$ is a positive and strictly increasing function on $(0,\infty)$ with $f_0(0)=0$ and $f_{0}(\infty)=0$, by Corollary~\ref{UCoroLemmaR} (or Lemma~\ref{LR2}). This is a contradiction. Therefore, our uniqueness result is proven.
\end{proof}
\subsection{Proof of Corollary \ref{cor 1.2}}
Suppose that $z \in \mathcal{D}^{m,2}_{\infty}(\alpha)$ is a minimizer for  $\mathcal{S}(\alpha,m)$ such that $z$ is $(2m-1)$-th nonsingular at $r=0$ and $z^{(i)}(0) = 0$,  for   $i = 1, 3, \ldots, 2m-1$. By Theorem \ref{Existencia de minimizante suave}, we have $z \in C^{2m}(0, \infty)$ and solves the equation \eqref{problema m}. Hence,  Lemma \ref{sinal de minimizante} ensures that $z$ has a defined sign, so we choose $z>0$. By Theorem \ref{Teorema unicidade} it follows that $z=w_{\Bar{\epsilon}}$, for some $\Bar{\epsilon}>0$. Conversely,   from \eqref{cond min non-singular} and Lemma~\ref{sinal de minimizante} we are able to choose a positive minimizer $u_0$ for $\mathcal{S}(\alpha,m)$ such that it is $(2m-1)$-th nonsingular at $r=0$,  $u^{(i)}_0(0) = 0$,  for   $i = 1, 3, \ldots, 2m-1$  and $\|u_0\|_{L^{2^{*}}_{\alpha}}=1$. The Lagrange multiplier theorem yields 
\begin{equation}\label{a0-e20-novo}
    \int_{0}^{\infty} \nabla^{m}_{\alpha} u_{0} \nabla^{m}_{\alpha} v r^{\alpha}\, dr = \mathcal{S}\int_{0}^{\infty} |u_{0}|^{2^{*}-2} u_{0} v r^{\alpha}\, dr, \quad \mbox{for all}\;\; v \in \mathcal{D}^{m,2}_{\infty}(\alpha).
\end{equation}
In particular, the function $ u_{\mathcal{S}}= \mathcal{S}^{\frac{1}{2^{*} - 2}} u_{0}$ satisfies  the equation
\begin{equation*}
    (-\Delta_{\alpha})^{m} u_{\mathcal{S}} = |u_{\mathcal{S}}|^{2^{*}-2} u_{\mathcal{S}}\;\;\mbox{in}\;\; (0, \infty).
\end{equation*}
Using the Theorem~\ref{Teorema unicidade}, we obtain that there is $\epsilon_0>0$ such that $w_{\epsilon_{0}}=u_{\mathcal{S}}=\mathcal{S}^{\frac{1}{2^{*} - 2}} u_{0}$. Thus, 
\begin{equation}\label{touch}
    \frac{\|\nabla^{m}_{\alpha} w_{\epsilon_0}\|_{L^{2}_{\alpha}}}{\|w_{\epsilon_0}\|_{L^{2^*}_{\alpha}}}=\|\nabla^{m}_{\alpha}u_0\|_{L^{2}_{\alpha}}=\mathcal{S}^{\frac{1}{2}}.
\end{equation}
It follows that $w_{\epsilon_0}$ is a minimizer. Now, we note that  
\begin{equation}\label {wuu}
    w_{\epsilon}(r)=\mathcal{P}^{\frac{\alpha - 2m +1}{4m}} \left( \dfrac{\epsilon}{\epsilon^{2}+ r^2} \right)^{\frac{\alpha - 2m +1}{2}}=\mathcal{P}^{\frac{\alpha - 2m +1}{4m}} \left[\epsilon^{-\frac{\alpha+1}{2^{*}}}u\big(\frac{r}{\epsilon}\big)\right],
\end{equation}
where $u$ is defined by \eqref{c0-e10}. Thus, setting $s=r/\epsilon$ we get
\begin{align}\label{lp-naoe}
\|w_{\epsilon}\|_{L^{2^*}_{\alpha}}=\mathcal{P}^{\frac{\alpha - 2m +1}{4m}}\|u\|_{L^{2^*}_{\alpha}},\quad \mbox{for all}\;\; \epsilon>0.
\end{align}
Further, from \cite[Lemma~3.1]{JN-JF}, for any $j\in\mathbb N$ 
\begin{equation}\label{c0-e61-par}
    (-\Delta_{\alpha})^{j}\left[\epsilon^{-\frac{\alpha+1}{2^{*}}}u\big(\frac{r}{\epsilon}\big)\right] = \epsilon^{-(\frac{\alpha+1}{2^*} + 2j)} [(-\Delta_{\alpha})^{j} u ](\frac{r}{\epsilon})
\end{equation}
and 
\begin{equation}\label{c0-e61-impar}
    \left((-\Delta_{\alpha})^{j}\left[\epsilon^{-\frac{\alpha+1}{2^{*}}}u\big(\frac{r}{\epsilon}\big)\right]\right)^{\prime} = \epsilon^{-(\frac{\alpha+1}{2^*} + 2j+1)} [(-\Delta_{\alpha})^{j} u ]^{\prime}(\frac{r}{\epsilon}).
\end{equation}
Setting $s=r/\epsilon$ and using \eqref{wuu}, \eqref{c0-e61-impar} and \eqref{c0-e61-impar}, we obtain 
\begin{align}\label{gradnaoe}
   \|\nabla^{m}_{\alpha} w_{\epsilon}\|_{L^{2}_{\alpha}}=\mathcal{P}^{\frac{\alpha - 2m +1}{4m}}\|\nabla^{m}_{\alpha} u\|_{L^{2}_{\alpha}},\;\; \mbox{for all}\;\; \epsilon>0.
\end{align}
From \eqref{touch}, \eqref{lp-naoe} and \eqref{gradnaoe} it follows that 
\begin{equation}\nonumber
\frac{\|\nabla^{m}_{\alpha} w_{\epsilon}\|_{L^{2}_{\alpha}}}{\|w_{\epsilon}\|_{L^{2^*}_{\alpha}}}=    \frac{\|\nabla^{m}_{\alpha} w_{\epsilon_0}\|_{L^{2}_{\alpha}}}{\|w_{\epsilon_0}\|_{L^{2^*}_{\alpha}}}=\mathcal{S}^{\frac{1}{2}}, \quad\mbox{for all}\;\; \epsilon>0.
\end{equation}    

\section{The best constant $\mathcal{S}^{-\frac{1}{2}}(\alpha,m)$}
\label{sec4}
\begin{proof}[Proof of Theorem \ref{Theorem 1.3}]
Let  $z>0$ be a minimizer for  $\mathcal{S}(\alpha, m)$, as in the assumption \eqref{cond min non-singular}. Since the dilation of a minimizer is still a minimizer, we choose $z= w_{1}$ to calculate the value of $\mathcal{S}$, where $w_{1}$ is given in \eqref{c0-e13 +}. Since $w_{1}$ solves \eqref{problema m}, we can write
$$\| \nabla_{\alpha}^{m} z\|^{2}_{L^{2}_{\alpha}} = \|z\|^{2^{*}}_{L_{\alpha}^{2^{*}}}.$$
Thus,
\begin{equation}\label{S-integral}
 \mathcal{S} = \dfrac{\|\nabla^{m}_{\alpha} z \|^{2}_{L^{2}_{\alpha}}}{\|z\|^{2}_{L^{2^{*}}_{\alpha}}} = \|z\|^{2^{*}-2}_{L^{2^{*}}_{\alpha}} = \mathcal{P} \left[\int_{0}^{\infty} \frac{r^{\alpha}}{(1+r^{2})^{\alpha+1}}\, dr\right]^{\frac{2m}{\alpha+1}}.   
\end{equation}
We recall the identities 
\begin{equation}\nonumber
    \int_{0}^{\infty}\frac{s^{x-1}}{(1+s)^{x+y}}ds=\frac{\Gamma(x)\Gamma(y)}{\Gamma(x+y)}, \;\;\Gamma(1)=1\;\;\mbox{and}\;\; \Gamma(x+1)=x\Gamma(x),
\end{equation}
see for instance \cite{S-functions}. Hence, 
$$ \int_{0}^{\infty} \frac{r^{\alpha}}{(1+r^{2})^{\alpha+1}}\, dr =\frac{\Gamma(\frac{\alpha+1}{2})\Gamma(\frac{\alpha+1}{2})}{2\Gamma(\alpha+1)}$$
which together \eqref{S-integral} yields
$$
\mathcal{S}^{-\frac{1}{2}} =  \mathcal{P}^{-\frac{1}{2}}\left[\dfrac{2\Gamma(\alpha+1)}{\Gamma(\frac{\alpha+1}{2})\Gamma(\frac{\alpha+1}{2})}\right]^{\frac{m}{\alpha+1}}
$$
as desired.
\end{proof}
\begin{proof}[Proof of Corollary~\ref{corom=2}]
It is a direct consequence of Lemma~\ref{lemma-iterationRev} and Theorem~\ref{Theorem 1.3}.
\end{proof}
\appendix
\numberwithin{equation}{section}
\section{Uniqueness}
Let $m\ge 1$ be an integer and let $\alpha\in\mathbb{R}$ satisfy \eqref{SSS-condi}. 
Denote by $\Delta_{\alpha}$ the $\alpha$-generalized radial Laplacian defined in \eqref{laplace-geral}. For $a=(a_0, \dots, a_{m-1})\in\mathbb{R}^m$, let us consider the initial value problem
\begin{equation}\label{PVILocal}
\left\{\begin{aligned}
&\left.
\begin{aligned}
&u_0=u\\
&u_j=-\Delta_{\alpha}u_{j-1}, \quad j=1,\cdots,m-1\\
&-\Delta_{\alpha}u_{m-1}= |u|^{2^{*}-2}u\\
\end{aligned}
\right\}
\quad \text{in } (0,\delta], \; \delta>0
\\
& u_j(0)=a_j\;\; \mbox{and}\;\;  u^{\prime}_j(0)=0,\quad j=0,1,\cdots,m-1.
\end{aligned}\right.
\end{equation} 
We observe that the operator $\Delta_{\alpha}u = u^{\prime\prime} + \frac{\alpha}{r}u^{\prime}$ contains a singularity at $r=0$. Hence,  the initial conditions in \eqref{PVILocal} are interpreted as right limits at the origin, that is,
\begin{equation}\label{l-IC}
   u_j(0)=\lim_{r\to 0^+} u_j(r) \quad\mbox{and}\quad
u_j'(0)=\lim_{r\to 0^+} u_j'(r). 
\end{equation}
Due to this singularity, we cannot directly apply the classical Picard-Lindel\"{o}f theorem to obtain uniqueness of solutions for \eqref{PVILocal}.  We employ an approach based on Volterra integral equations, as in \cite[Theorem~XIII, p.71]{Walter} to ensure existence and uniqueness, at least for $\delta>0$ small enough. In fact, for  fixed $d>0$ we choose $\delta>0$ such that 
\begin{equation}\label{delta-peq}
   \delta^2=\min\left\{\frac{d}{d+|a|},\frac{d}{(d+|a|)^{2^*-1}},\frac{1}{(2^*-1)(2(d+|a|))^{2^*-2}}\right\},
\end{equation}
where $|a|=\max_{0\le j\le m-1} |a_j|$.
\begin{theorem}\label{thm-uniq} 
If $\delta>0$ is given by \eqref{delta-peq}, then the system \eqref{PVILocal} has exactly one solution $(u_0,\dots, u_{m-1})$ such that $|u_j(r)-a_j|\le d$, $j=0, \cdots, m-1$ for  $r\in I_{\delta}=[0, \delta]$.
\end{theorem}
\begin{proof}
Let $u_m= |u_0|^{2^*-2}u_0$. By noticing that
$
u_j=-\Delta_{\alpha}u_{j-1}
= -r^{-\alpha}(r^{\alpha}u'_{j-1})'$, $j=1,\dots,m$ and using the initial conditions $u_{j-1}(0)=a_{j-1}$
and $u'_{j-1}(0)=0$, we obtain
\begin{equation}\label{Q-Fubi}
    u_{j-1}(r)
= a_{j-1}
- \int_0^r t^{-\alpha}\int_0^t s^{\alpha}u_j(s)\,ds\,dt,\;\; j=1, \cdots, m.
\end{equation}
As in \cite[p. 71]{Walter}, by applying Fubini's theorem to \eqref{Q-Fubi}, we can see that \eqref{PVILocal} is equivalent to the system of Volterra integral equations
\begin{equation}\label{Volterra-re}
u_{j-1}(r)
= a_{j-1}-\int_0^r k(r,s)u_j(s)\,ds,
\end{equation}
where
\[
k(r,s)=s^{\alpha}\int_s^r t^{-\alpha}\,dt.
\]
Since $(s/t)^{\alpha}\le 1$ for $0\le s\le t$, we have
\begin{equation}\label{kernel-bound}
0\le k(r,s)\le r-s,
\quad 0\le s\le r\le \delta.
\end{equation}
Set $B_{d}=\{x\in \mathbb{R}^m\,:\, |x-a|\le d\}$, where $|x|=\max_{0\le j\le m-1} |x_j|$ for $x\in\mathbb{R}^m$. Now, let $X=C(I_{\delta}, B_d)$ be the space of continuous functions $\varphi: I_{\delta}\to B_{d}$ endowed with the metric
\[d(\varphi,\psi)
=
\max_{0\le j\le m-1} \|\varphi_j-\psi_j\|_{\infty},\;\;\mbox{with}\;\; \varphi=(\varphi_0,\cdots, \varphi_{m-1})\;\; \mbox{and}\;\; \psi=(\psi_0,\cdots, \psi_{m-1})
\]
where $\|\varphi\|_{\infty}=\sup_{I_\delta}|\varphi|$. Now, for each $\varphi\in X$, we set $T\varphi: I_{\delta}\to\mathbb{R}^m$ with $T\varphi=((T\varphi)_0, \cdots, (T\varphi)_{m-1})$, where 
\[
\left\{
\begin{aligned}
(T\varphi)_{m-1}(r)
&= a_{m-1}
- \int_0^r k(r,s)\,
|\varphi_0(s)|^{2^*-2}\varphi_0(s)\,ds,\\
(T\varphi)_{j-1}(r)
&= a_{j-1}
- \int_0^r k(r,s)\varphi_j(s)\,ds,
\qquad j=1,\dots,m-1.
\end{aligned}
\right.
\]
According with \eqref{Volterra-re}, the  fixed points of $T$ are precisely the solutions of
\eqref{PVILocal} on $I_\delta$.

From \eqref{kernel-bound}, for $j=1,\dots,m-1$ we obtain
\begin{equation}\label{Iva-1}
    |(T\varphi)_{j-1}(r)-a_{j-1}|
\le \int_0^r (r-s)|\varphi_j(s)|\,ds
\le \frac{\delta^2}{2}\big(d+|a|\big),
\end{equation}
where we have used that $\|\varphi_j\|_{\infty}\le \|\varphi_j-a_j\|_{\infty}+|a_j|\le d+|a|$. Further, 
\begin{equation}\label{Iva-2}
|(T\varphi)_{m-1}(r)-a_{m-1}|
\le \int_0^r (r-s)|\varphi_0(s)|^{2^*-1}\,ds \le \frac{\delta^2}{2}\big(d+|a|\big)^{2^*-1}.
\end{equation}
From \eqref{Iva-1} and \eqref{Iva-2}, we have that $T\varphi$ is continuous on $[0,\delta]$ with $(T\varphi)(0)=a$. In addition, from \eqref{delta-peq}, \eqref{Iva-1} and \eqref{Iva-2} we have
\begin{equation*}
    |(T\varphi)(r)-a|\le d,\;\;\mbox{for all}\;\; r\in I_{\delta}.
\end{equation*}
So, $T(X)\subset X$. Now, for $\varphi,\psi\in X$ and $j=1,\dots,m-1$,   we can write 
\begin{equation}\label{Contra-1}
    |(T\varphi)_{j-1}(r)-(T\psi)_{j-1}(r)|
\le \int_0^r (r-s)|\varphi_j(s)-\psi_j(s)|\,ds
\le \frac{\delta^2}{2}d(\varphi,\psi),\;\; \mbox{for}\;\; r\in I_{\delta}.
\end{equation}
We recall that for any  $p \ge 2$ it holds
$$\bigl| |x|^{p-2}x - |y|^{p-2}y \bigr| \le (p-1)(|x| + |y|)^{p-2} |x - y|,\;\;\; x, y\in \mathbb{R}.$$
Thus, since $\|\varphi_0\|_{\infty}, \|\psi_0\|_{\infty}\le d+|a|$, we obtain
\begin{equation} \nonumber
\begin{aligned}
 \big||\varphi_0|^{2^*-2}\varphi_0-|\psi_0|^{2^*-2}\psi_0\big|
&\le (2^*-1)\big((\|\varphi_0\|_{\infty}+\|\psi_0\|_{\infty}\big)^{2^*-2}\|\varphi_0-\psi_0\|_{\infty}   \\
& \le (2^*-1)\big(2(d+|a|)\big)^{2^*-2}d(\varphi,\psi),
\end{aligned}    
\end{equation}
on $I_\delta$. Then, 
\begin{equation} \label{Contra-2}
\begin{aligned}
 |(T\varphi)_{m-1}(r)-(T\psi)_{m-1}(r)|
& \le \frac{\delta^2}{2}(2^*-1)\big(2(d+|a|)\big)^{2^*-2}d(\varphi,\psi),\;\;\mbox{for}\;\; r\in I_{\delta}.
\end{aligned}    
\end{equation}
From \eqref{delta-peq}, \eqref{Contra-1} and \eqref{Contra-2}, we obtain
\begin{equation}
    d(T\varphi, T\psi)\le \frac{1}{2}d(\varphi,\psi).
\end{equation}
Hence, $T:X\to X$ is a contraction, and by the Banach contraction principle, $T$ admits a unique fixed point
$\varphi\in X$. 
\end{proof}

\begin{corollary}\label{v=w} Suppose \eqref{SSS-condi} holds. Then, for fixed $a=(a_0,\cdots, a_{m-1})\in \mathbb{R}^m$, the problem 
\begin{equation}\label{PVIGlobal}
\left\{\begin{aligned}
&\left.
\begin{aligned}
&u_0=u\\
&u_j=-\Delta_{\alpha}u_{j-1}, \quad j=1,\cdots,m-1\\
&-\Delta_{\alpha}u_{m-1}= |u|^{2^{*}-2}u\\
\end{aligned}
\right\}
\quad \text{in } (0,\infty),
\\
& u_j(0)=a_j\;\; \mbox{and}\;\;  u^{\prime}_j(0)=0,\quad j=0,1,\cdots,m-1
\end{aligned}\right.
\end{equation} 
admits at most one solution  $u\in C^{2m}(0,\infty)$.
\end{corollary}
\begin{proof}
   Suppose that $v,w\in C^{2m}(0,\infty)$  are solutions of \eqref{PVIGlobal}, with initial conditions understood in the sense of \eqref{l-IC}.  Let 
    $\eta=\sup\{\rho>0\;:\: v\equiv w\;\; \mbox{on}\;\; [0,\rho]\}$. By Theorem~\ref{thm-uniq} we have $0<\delta\le\eta\le \infty$. Assume by contradiction that $\eta<\infty$. From $v,w\in C^{2m}(0,\infty)$, for   $j=0,1,\cdots,m-1$ we can see from \cite[Lemma~2.9]{JN-JF} that both $v_j$ and $w_j$ are at least $C^1$ in a small open neighborhood of $\eta>0$. Since $v=w$ for $0\le r<\eta$, by continuity  $v_j(\eta)=w_j(\eta)$ and $v^{\prime}_j(\eta)=w^{\prime}_j(\eta)$. Thus,  both $v$ and $w$ solve the initial value problem
    \begin{equation}\label{PVISw-final}
\left\{\begin{aligned}
&\left.
\begin{aligned}
&u_0=u\\
&u_j=-\Delta_{\alpha}u_{j-1}, \quad j=1,\cdots,m-1\\
&-\Delta_{\alpha}u_{m-1}= |u|^{2^{*}-2}u\\
\end{aligned}
\right\}
\quad \text{in } (\eta,\infty),
\\
& u_j(\eta)=v_j(\eta)\;\; \mbox{and}\;\;  u^{\prime}_j(\eta)=v^{\prime}_j(\eta),\quad j=0,1,\cdots,m-1
\end{aligned}\right.
\end{equation} 
Since $\eta>0$, \eqref{PVISw-final} does not have singular term on $[\eta, \infty)$, then we can apply the classical Picard-Lindel\"{o}f uniqueness theorem to ensures that $v\equiv w$ on $[\eta, \eta+\epsilon)$ for some $\epsilon>0$ which contradicts the definition of $\eta$.
\end{proof}

\section*{Acknowledgement}
We are deeply grateful to the anonymous referee for observing the validity of Lemma~\ref{lemma-iterationRev}, which  allowed us to clarify the proof of Theorem~\ref{Teorema unicidade}.
\printbibliography

@book{Walter,
 author = {W. Walter},
 title = {Ordinary differential equations. {Transl}. from the {German} by {Russell} {Thompson}},
 fseries = {Graduate Texts in Mathematics},
 series = {Grad. Texts Math.},
 issn = {0072-5285},
 volume = {182},
 isbn = {0-387-98459-3},
 year = {1998},
 publisher = {New York NY: Springer-Verlag},
 language = {English},
 keywords = {34-01,65-01,34Axx,34Bxx,34Dxx,34Mxx},
 zbMATH = {1183917},
 Zbl = {0991.34001}
}

@article{Brezis-Nirenberg,
 author = {H. Br{\'e}zis,  and L.  Nirenberg},
 title = {Positive solutions of nonlinear elliptic equations involving critical {Sobolev} exponents},
 journal = {Commun. Pure Appl. Math.},
 volume = {36},
 pages = {437--477},
 year = {1983}
}

@article{Abreu,
 author = {E. Abreu,  and L.G. Fernandes},
 title = {On a weighted {Trudinger}-{Moser} inequality in {{\(\mathbb{R}^N\)}}},
 journal = {J. Differ. Equations},
 volume = {269},
 pages = {3089-3118},
 year = {2020}
}

@article{Talenti,
 author = {G. Talenti},
 title = {Best constant in {Sobolev} inequality},
 journal = {Ann. Mat. Pura Appl.},
 volume = {110},
 pages = {353-372},
 year = {1976}
}

@BOOK {Gazzola-Grunau-Sweers,
    author    = "F. Gazzola and H.C. Grunau and  G. Sweers",
    title     = "Polyharmonic Boundary Value Problems: Positivity Preserving and Nonlinear Higher Order Elliptic Equations in Bounded Domains",
    publisher = "Springer-Verlag",
    year      = "1991"
}

@ARTICLE {Ibero1,
    author  = "J.F. de Oliveira and J.M. do \'{O}",
    title   = "Equivalence of critical and subcritical sharp {T}rudinger-{M}oser inequalities in fractional dimensions and extremal functions",
    journal = "Rev. Mat. Iberoam.",
    year    = "2023",
    volume  = "39",
    pages   = "1073–1096"
}

@ARTICLE {CV2023,
    author  = "J.F. de Oliveira and J.M. do \'{O}",
    title   = "On a sharp inequality of {A}dimurthi-{D}ruet type and extremal functions",
    journal = "Calc. Var.",
    year    = "2023",
    volume  = "62",
    number  = "162"
}

@ARTICLE {DCDS2019,
    author  = {J.F. de Oliveira, and J.M. do \'{O}, and P. Ubilla},
    title   = "Hardy-{S}obolev type inequality and supercritical extremal problem",
    journal = {Discrete Contin. Dyn. Syst.},
    year    = {2019},
    volume  = {39},
    pages   = {3345--3364}
}

@article{DoLuHa,
 author = {J.M. do \'{O}, and  G. Lu, and R. Ponciano},
 title = {Sharp {Sobolev} and {Adams}-{Trudinger}-{Moser} embeddings on weighted {Sobolev} spaces and their applications},
 fjournal = {Forum Mathematicum},
 journal = {Forum Math.},
 volume = {36},
 pages = {1279-1320},
 year = {2024}
}

@article{DoLuRa2,
 author = {J.M. do {\'O},  and G. Lu,  and R.  Ponciano},
 title = {Trudinger-{Moser} embeddings on weighted {Sobolev} spaces on unbounded domains},
 journal = {Discrete Contin. Dyn. Syst.},
 volume = {45},
 pages = {557--584},
 year = {2025}
}

@article{CCM02017,
    author  = "J.M. do \'{O} and J .F. de Oliveira",
    title   = "Concentration-compactness and extremal problems for a weighted {T}rudinger-{M}oser  inequality",
    journal = "Commun. Contemp. Math.",
    year    = "2017",
    volume  = "19",
    pages   = "1650003"
}

@article{JN-JF,
    author = "J.F. de Oliveira and J.N. Silva",
    title  = "On a {S}obolev-type inequality and its minimizers",
    journal = "Anal. Appl.",
    year    = "2024",
    volume  = "22",
    pages   = "1417-1446"
 }

@article{JAJ,
    author  = "J.M. do {\'O} and  A. Macedo, and J.F. de Oliveira",
    title   = "A sharp {A}dams-type inequality for weighted {S}obolev spaces",
    journal = "Q. J. Math.",
    year    = "2020",
    volume  = "71",
    pages   = "517-538"
}

@article {doOdeOliveira2014,
    author  = "J.F. de Oliveira and J.M. do \'O",
    title   = "Trudinger-{M}oser type  inequalities for weighted {S}obolev spaces involving fractional dimensions",
    journal = "Proc. Amer. Math. Soc.",
    year    = "2014",
    volume  = "142",
    pages   = "2813-2828"
}

@article {Clement-deFigueiredo-Mitidieri,
    author  = {P. Cl\'ement and D.G. de Figueiredo, and E. Mitidieri},
    title   = {Quasilinear elliptic equations with critical exponents},
    journal = {Topol. Methods Nonlinear Anal.},
    year    = {1996},
    volume  = {7},
    pages   = {133-170}
}

@article {S,
    author  = "C.A. Swanson",
    title   = "The best {S}obolev constant",
    journal = "Appl. Anal.",
    year    = "1992",
    volume  = "47",
    pages   = "227-239"
}

@article{JFO,
    author  = "J.F. de Oliveira",
    title   = "On a class of quasilinear elliptic problems with critical exponential growth on the whole space",
    journal = "Topol. Methods Nonlinear Anal.",
    year    = "2017",
    volume  = "49",
    pages   = "529-550"
}

@article{EFJ,
    author  = "D.E. Edmunds  and  D. Fortunato and E. Jannelli",
    title   = "Critical exponents, critical dimensions and the biharmonic operator",
    journal = "Arch. Rational Mech. Anal.",
    year    = "1990",
    volume  = "112",
    pages   = "269-289"
}

@article{Moreau,
    author = {J.J. Moreau},
    title = {{D{\'e}composition orthogonale d'un espace hilbertien selon deux c{\^o}nes mutuellement polaires}},
    journal = {{Comptes rendus hebdomadaires des s{\'e}ances de l'Acad{\'e}mie des sciences}},
    publisher = {{Gauthier-Villars}},
    volume = {255},
    pages = {238-240},
     year = {1962}
}

@article{DoLuRa3,
 author = {J.M. do \'{O} and G. Lu, and R. Ponciano},
 title = {Sharp higher order {Adams}' inequality with exact growth condition on weighted {Sobolev} spaces},
 journal = {J. Geom. Anal.},
 volume = {34},
 number = {53},
 year = {2024}
}

@article{Bliss,
    author  = {G. Bliss},
    title   = {An integral inequality},
    journal = {J. London Math. Soc.},
    year    = {1930},
    volume  = {5},
    pages   = {40-46}
}

@article{Strauss,
    author  = "W.A. Strauss",
    title   = "Existence of solitary waves in higher dimensions",
    journal = "Commun.Math. Phys.",
    year    = "1977",
    volume  = "55",
    pages   = "149-162"
}

@article{JX1999,
    author  = "J. Wei and X. Xu,",
    title   = "Classification of solutions of higher order conformally invariant equations",
    journal = "Math. Ann.",
    year    = "1999",
    volume  = "313",
    pages   = "207-228"
}

@article{FK2019,
    author  = "R.L. Frank and T. K{\"o}nig,",
    title   = "Classification of positive singular solutions to a nonlinear biharmonic equation with
critical exponent",
    journal = "Anal. PDE",
    year    = "2019",
    volume  = "12",
    pages   = "1101-111"
}

@misc{RS1989,
 author = {R.M. Schoen},
 title = {Variational theory for the total scalar curvature functional for {Riemannian} metrics and related topics},
 year = {1989},
 howpublished = {Topics in calculus of variations, {Lect}. {Notes} {Math}. 1365, 120-154}
}

@unpublished{AW2022,
    author  = {J.H. Andrade, and J. Wei},
    title   = {Classification for positive singular solutions to critical sixth order equations},
    note = {arXiv:2210.04376},
    url = {
https://doi.org/10.48550/arXiv.2210.04376}
}

@article{WC1991,
    author  = "W. Chen, and C. Li,",
    title   = " Classification of solutions of some nonlinear elliptic equations",
    journal = "Duke Math. J.",
    year    = "1991",
    volume  = "63",
    pages   = "615-622"
}

@article{CGS1989,
    author  = "L. Caffarelli and B. Gidas, and J. Spruck,",
    title   = "Asymptotic symmetry and local behavior of semilinear equations with critical Sobolev growth",
    journal = "Comm. Pure Appl. Math.",
    year    = "1989",
    volume  = "42",
    pages   = "271-289"
}

@article {XL2020,
    author  = "X. Huang and L. Wang,",
    title   = "Classification to the positive radial solutions with weighted biharmonic equation",
    journal = "Discrete Contin. Dyn. Syst.",
    year    = "2020",
    volume  = "40",
    pages   = "4821-4837"
}

@article{DT2024,
    author  = {S. Deng and X. Tian},
    title   = {Classification and non-degeneracy of positive radial solutions for a weighted fourth-order equation and its application},
    journal = {Nonlinear Anal.},
    year    = {2024},
    volume  = {240},
    pages   = {113468}
}

@book{S-functions,
 author = {G.E. Andrews and R. Askey, and R. Roy,},
 title = {Special functions},
 edition = {Paperback ed.},
 fseries = {Encyclopedia of Mathematics and Its Applications},
 series = {Encycl. Math. Appl.},
 issn = {0953-4806},
 volume = {71},
 year = {2000},
 publisher = {Cambridge: Cambridge University Press}
}

@article{zbMATH06394199,
 author = {R. Servadei, and E. Valdinoci},
 title = {The {Brezis}-{Nirenberg} result for the fractional {Laplacian}},
 journal = {Trans. Am. Math. Soc.},
 volume = {367},
 pages = {67--102},
 year = {2015}
}

@article{zbMATH03572527,
 author = {U. Elias},
 title = {Nonoscillation and eventual disconjugacy},
 journal = {Proc. Am. Math. Soc.},
 volume = {66},
 pages = {269-275},
 year = {1977}
}

\end{document}